\def\thesection{\arabic{section}}
\def\theequation{\thesection.\arabic{equation}}
\newcommand{\ds} {\displaystyle}
\newcommand{\e}{\epsilon}
\newcommand{\pa} {\partial}
\newcommand{\al} {\alpha}
\newcommand{\ba} {\beta}
\newcommand{\de} {\delta}
\newcommand{\ga} {\gamma}
\newcommand{\Om} {\Omega}
\newcommand{\rp} {\rightharpoonup}
\newcommand{\ra} {\rightarrow}
\newcommand{\De} {\Delta}
\newcommand{\la} {\lambda}
\newcommand{\La} {\Lambda}
\newcommand{\noi} {\noindent}
\newcommand{\mb} {\mathbb}
\newcommand{\mc} {\mathcal}
\newcommand{\lra} {\longrightarrow}
\def\theequation{\@arabic{\c@section}.\@arabic{\c@equation}}
\def\QED{\hfill {$\square$}\goodbreak \medskip}
\newtheorem{Theorem}{Theorem}[section]
\newtheorem{Lemma}[Theorem]{Lemma}
\newtheorem{Proposition}[Theorem]{Proposition}
\newtheorem{Definition}[Theorem]{Definition}
\begin{document}
{\vspace{0.01in}
	\title{Unbalanced $(p,2)$-fractional problems with critical growth}
	
	\author{{\bf Deepak Kumar\footnote{email: {\tt deepak.kr0894@gmail.com}}  \;and  \bf K. Sreenadh\footnote{	e-mail: {\tt sreenadh@maths.iitd.ac.in}}} \\ Department of Mathematics,\\ Indian Institute of Technology Delhi,\\
		Hauz Khaz, New Delhi-110016, India. }

	\date{}
	
	\maketitle

\begin{abstract}

We study the existence, multiplicity and regularity results of
non-negative solutions of following doubly nonlocal problem:
$$ (P_\la) \left\{
\begin{array}{lr}\ds
 \quad   (-\Delta)^{s_1}u+\ba (-\Delta)^{s_2}_{p}u = \la a(x)|u|^{q-2}u+ \left(\int_{\Om}\frac{|u(y)|^r}{|x-y|^{\mu}}~dy\right)|u|^{r-2} u \quad \text{in}\;
\Om, \\
 \quad \quad\quad \quad u =0\quad \text{in} \quad \mb R^n\setminus \Om,
\end{array}
\right.
$$
where $\Om\subset\mb R^n$ is a bounded domain with $C^2$ boundary $\pa\Om$, $0<s_2 < s_1<1$,  $n> 2 s_1$, $1< q<p< 2$, $1<r \leq 2^{*}_{\mu}$ with $2^{*}_{\mu}=\frac{2n-\mu}{n-2s_1}$, $\la,\ba>0$ and $a\in L^{\frac{d}{d-q}}(\Om)$, for some $q<d<2^{*}_{s_1}:=\frac{2n}{n-2s_1}$, is a sign changing function.  We prove that each nonnegative weak solution of $(P_\la)$ is bounded. Furthermore, we obtain some existence and multiplicity results using Nehari manifold method.

\medskip

\noi \textbf{Key words:} Non-local operator, Fractional
$(p,2)$ Laplacian, Choquard equation, Nehari manifold.

\medskip

\noi \textit{2010 Mathematics Subject Classification:} 35J35, 35J60,
35R11.

\end{abstract}

\section {Introduction}
\noi In this article, we are concerned with the regularity, existence and multiplicity results for solutions to $(p,2)$-fractional Choquard equation
\begin{equation*}
  (P_\la) \left\{
\begin{array}{lr}\ds
 \quad  (-\Delta)^{s_1}u+\ba(-\Delta)^{s_2}_{p}u = \la a(x)|u|^{q-2}u+ \left(\int_{\Om}\frac{|u(y)|^r}{|x-y|^{\mu}}~dy\right)|u|^{r-2} u \; \text{in}\;
\Om \\
 \quad \quad  u =0\quad \text{in} \quad \mb R^n\setminus \Om,
\end{array}
\right.
 \end{equation*}
\noi where $\Om$ is a bounded domain in $\mb R^n$ with $C^2$ boundary $\pa\Om$, $0<s_2 < s_1<1$,  $n> 2 s_1$, $1< q<p< 2$, $1<r \leq 2^{*}_{\mu}$ with $2^{*}_{\mu}=\frac{2n-\mu}{n-2s_1}$, $\la,\ba>0$, $0<\mu<n$ and $a\in L^{\frac{d}{d-q}}(\Om)$, where $q<d<2^{*}_{s_1}$ and $2^{*}_{s_1}=\frac{2n}{n-2s_1}$, is a sign changing function such that $a^+(x):=\max\{a(x),0 \}\not\equiv 0$. $(-\Delta)^{s}_{m}$  is the Fractional $m$-Laplace operator, defined as follows
\begin{equation*}
{(-\Delta)^{s}_mu(x)}=- 2\lim_{\e\ra 0}\int_{\mb R^n\setminus B_\e(x)} \frac{|u(y)-u(x)|^{m-2}(u(x)-u(y))}{|x-y|^{n+ms}}dy,
\end{equation*}
 for $m>1$ and $s\in(0,1)$. The problems involving these kind of operators have their applications in obstacle problems, conservation laws, phase transition, image processing, anomalous diffusion, American options in finance. For more details, we refer to \cite{ caffarelli,nezzaH,mingqi,pham}  and the references therein.\par
\noi In the local case, that is, when $s_1=s_2=1$, and $\ba=0$,
the following equation with Choquard type nonlinearity
 \begin{equation}\label{eqb2}
 	-\De u+V(x)u= \big(|x|^{-\mu}*|u|^p\big) |u|^{p-2}u, \mbox{ in }\mb R^n
 \end{equation}
 has been studied extensively.
 In case of $n=3, p=2$ and $\mu=1$, S. Pekar \cite{pekar} describes \eqref{eqb2} in quantum mechanics of a polaron at rest. Under the same assumptions, P. Choquard, in 1976, used \eqref{eqb2}, in modeling of an electron trapped in its own hole, in a certain approximation to Hartree-Fock theory of one component plasma \cite{lieb}.  Buffoni et al. \cite{buffoni} proved existence of at least one non-trivial solution of \eqref{eqb2} when $p=2$, $n= 3$ and potential $V$ is sign changing periodic function with the assumption that $0$ lies in the gap of the spectrum of the operator $-\De+V$.
 In \cite{alves}, Alves et al. obtained the existence of a non-trivial solution via penalization method of the problem
   \begin{equation*}
   	 -\De u+V(x)u =\big( |x|^{-\mu}*F(u)\big)f(u), \mbox{ in }\mb R^n,
   \end{equation*}
 where $0<\mu<n$, $n=3$, $V$ is a continuous real valued function and $F$ is the primitive of $f$. Gao and Yang \cite{gao} studied the following Brezis-Nirenberg type problem
 \begin{align*}
 	-\De u=\la u+\left(\int_{\Om} \frac{|u(y)|^{2^*_\mu}}{|x-y|^\mu}dy\right)|u(x)|^{2^*_{\mu}-2}u(x) \mbox{ in }\Om, \quad u=0 \mbox{ on }\partial\Om,
 \end{align*}
  where $\Om\subset\mb R^n$ is a bounded domain, $n\ge 3$ and $0<\mu<n$. They proved existence, multiplicity and non-existence results with respect to the parameter $\la$.\par
 \noi In the nonlocal case D' Avenia et al. \cite{davenia} considered
 \begin{equation*}
 	(-\De)^s u+\omega u= \big(|x|^{\al-n}*|u|^p \big)|u|^{p-2}u, \mbox{ in }\mb R^n,
 \end{equation*}
 where $\omega>0$, $p>1$ and $s\in(0,1)$. In this work authors proved regularity, existence, non-existence, symmetry as well as decay properties of solution. Mukherjee and Sreenadh \cite{tuhinaBN} obtained Brezis-Nirenberg type results for problem involving fractional Laplacian with Choquard type nonlinearity having critical growth.
 In \cite{pucci}, Pucci et al. studied problem involving critical Choquard nonlinearity with fractional $p$-Laplacian. For related work on this type of problems, we refer to \cite{goel,mingqi1,moroz,su,xiang} and the references therein.\par
 \noi Partial differential equations involving operator $-\De_p-\De_q$, known as the $(p,q)$-Laplacian, arises from important applications such as biophysics, plasma physics, reaction-diffusion (see \cite{bobkov, cherfils,fife,maranorcn,wilhel}). Due to this, a lot of work has been done in last decade on $(p,q)$-Laplacian problems. Among them,
   Papageorgiou and R\v{a}dulescu \cite{papageo} considered the problem
   \begin{equation*}
      -\De_p u-\De u=f(x,u) \mbox{ in }\Om, u=0 \mbox{ on }\partial\Om,
   \end{equation*}
  where $\Om$ is a bounded domain and $f$ is a Carath\'eodory function. Among other results, authors proved existence of four non-trivial solutions for the case $1<p<2$. Aizicovici et al. \cite{aizic} proved existence of constant sign and nodal solutions for the problem
   \begin{equation*}
  -\De_p u-\mu\De u=f(x,u) \mbox{ in }\Om, u=0 \mbox{ on }\partial\Om,
  \end{equation*}
  where $\Om$ is a bounded domain, $p>2$ and $\mu>0$.
 For general $p$ and $q$, Yin and Yang \cite{yin} considered
  \begin{align*}
  -\Delta_{p}u-\Delta_{q} u=|u|^{p^*-2}u+\theta V(x)|u|^{r-2}u+\la f(x,u) \ \mbox{ in } \Om, \ \ u=0 \ \mbox{ on }\pa\Om,
  \end{align*}
  where  $1<r<q<p<n$ and $f(x,u)$ is a subcritical perturbation and they proved multiplicity of solutions using Lusternik-Schnirelman theory.
   Marano et. al  \cite{marano} studied the problem with Carath\'eodory function having critical growth. Using critical point theory with truncation arguments and comparison principle authors also proved bifurcation type result. 
   \par
 \noi As far as problems involving fractional $(p,q)$-Laplacian is concerned, there is not much literature available. Bhakta and Mukherjee \cite{bhakta} considered the problem
 \begin{equation*}
 (Q_{\theta,\la}) \left\{
 (-\Delta)^{s_1}_{p}u+ (-\Delta)^{s_2}_{q}u = |u|^{p_{s_1}^*-2}u+\theta V(x)|u|^{r-2}u+ \la f(x,u) \; \text{in}\; \Om, \;
 u =0\; \text{in} \ \mb R^n\setminus \Om,
 \right.
 \end{equation*}
 where $0<s_2<s_1<1$, $1<r<q<p<\frac{n}{s_1}$, and  $V$ and $f$ are some appropriate functions. Here  they proved $(Q_{\theta,\la})$ has infinitely many weak solutions for some range of $\la$ and $\theta$. Moreover, for  $V(x)\equiv 1$, $\la=0$, and assuming $r>q$ and certain other conditions on $n$ and $r$, they proved the existence of $cat_\Om(\Om)$ many solutions of $(Q_{\theta,\la})$ using Lusternik-Schnirelmann category theory. Goel et al. \cite{goel1} studied the following fractional $(p,q)$-Laplacian problem
 \begin{equation}\label{eqb4}
 	 (-\Delta)^{s_1}_{p}u+\ba(-\Delta)^{s_2}_{q}u  = \la a(x)|u|^{\delta-2}u+ b(x)|u|^{r-2} u\;  \text{ in }\; \Om, \; u=0 \text{ in } \mathbb{R}^n\setminus \Om,
 \end{equation}
 where $\Om\subset\mb{R}^n$ is a bounded domain, $1<
 \de \le q\leq p<r \leq p^{*}_{s_1}=\ds \frac{np}{n-ps_1}$,
 $0<s_2 < s_1<1$, $n> p s_1$, $\la, \ba>0$, and  $a$ and $b$ are sign changing
 functions. Using Nehari manifold method authors proved existence of at least two non-negative and non-trivial solutions in the subcritical case for all $\ba>0$ and for some range of $\la$. For the critical case under some restriction on $\de$, they obtained multiplicity results in some range of $\ba$ and $\la$. Furthermore, they proved weak solutions of \eqref{eqb4} are in the space $L^\infty(\Om)\cap C^{0,\al}_{\text{loc}}(\Om)$, for some $\al\in(0,1)$, when $2\le q\le p<r<p^*_{s_1}$.\par
 \noi Inspired from all these works we study regularity, existence and multiplicity results of  $(p,2)$ fractional Laplacian problem with critical Choquard type non-linearity. Using Moser iteration technique we prove each weak solution of problem $(P_\la)$ is bounded, in the case $\mu<4s_1$. In this regard, we prove the following theorem.
  \begin{Theorem}\label{thb}
 	Suppose $\mu<4s_1$ and the function $a(x)$ is bounded in $\Om$. Let $u$ be a non-negative solution of problem $(P_\la)$, then $u\in L^\infty(\Om)$.
 \end{Theorem}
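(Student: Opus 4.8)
The plan is to run a Moser iteration driven by the leading linear operator $(-\De)^{s_1}$, treating the fractional $p$-term and the two right-hand nonlinearities as perturbations. Being a weak solution, $u$ lies in the energy space and hence $u\in L^{2^{*}_{s_1}}(\Om)$ by the fractional Sobolev embedding attached to $(-\De)^{s_1}$; this is the integrability from which the iteration starts. Fix $\ga>1$ and a level $K\ge 1$, set $u_K=\min\{u,K\}$, $w_K=u\,u_K^{\ga-1}$, and test the weak formulation of $(P_\la)$ with the admissible function $\vph=u\,u_K^{2(\ga-1)}$ (bounded, vanishing off $\Om$, and in the energy space because $u$ is). Since $\vph=\Phi(u)$ for a non-decreasing $\Phi$, the integrand of $\ba\,\ld(-\De)^{s_2}_pu,\vph\rd$ is pointwise non-negative, so that term is simply discarded. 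For the linear part I would use the convexity inequality $(a-b)\big(a\,a_K^{2(\ga-1)}-b\,b_K^{2(\ga-1)}\big)\ge c_\ga\big(a\,a_K^{\ga-1}-b\,b_K^{\ga-1}\big)^2$ (valid for $a,b\ge0$) to bound $\ld(-\De)^{s_1}u,\vph\rd$ below by $c_\ga[w_K]_{s_1}^2$, followed by the fractional Sobolev inequality $[w_K]_{s_1}^2\ge S\,\|w_K\|_{2^{*}_{s_1}}^2$. It then remains to estimate the right-hand side by quantities of the form $\int u^{2\ga}$.

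The subcritical term is harmless: since $a$ is bounded and $q<2$, splitting $\Om$ into $\{u<1\}$ and $\{u\ge1\}$ gives $\la\int a\,u^{q}u_K^{2(\ga-1)}\le\la\|a\|_\infty\big(|\Om|+\int w_K^2\big)$, and $\int w_K^2\to\int u^{2\ga}$ as $K\to\infty$. The core of the argument is the Choquard term. Writing $V(x)=\int_\Om u(y)^{r}|x-y|^{-\mu}\,dy$, the Hardy--Littlewood--Sobolev inequality applied to $u\in L^{2^{*}_{s_1}}$ (so that $u^r\in L^{2n/(2n-\mu)}$ at the critical exponent $r=2^{*}_{\mu}$) places $V$ in $L^{2n/\mu}(\Om)$, with $\|V\|_{2n/\mu}$ a fixed a priori constant. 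The decisive use of the hypothesis $\mu<4s_1$ is that it makes $2^{*}_{\mu}>2$; thus, in the hardest case $r=2^{*}_{\mu}$, the factor $u^{r-2}$ has positive exponent, and writing $u^{r}u_K^{2(\ga-1)}=u^{r-2}w_K^2$ I can apply H\"older with the three exponents $\tfrac{2n}{\mu},\ \tfrac{2n}{4s_1-\mu},\ \tfrac{n}{n-2s_1}$:
\[
\int_\Om V\,u^{r}u_K^{2(\ga-1)}\,dx=\int_\Om V\,u^{r-2}\,w_K^2\,dx\le\|V\|_{2n/\mu}\,\|u\|_{2^{*}_{s_1}}^{\,r-2}\,\|w_K\|_{2^{*}_{s_1}}^2 .
\]
The reciprocals of these exponents sum to $1$, and $\mu<4s_1$ together with $n>2s_1$ guarantees that all three lie in $(1,\infty)$ while $(r-2)\cdot\tfrac{2n}{4s_1-\mu}=2^{*}_{s_1}$; this is exactly what forces the dependence on $w_K$ to be \emph{quadratic} rather than superquadratic.

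To close the estimate I would extract smallness from the critical growth: splitting the integral at a high level $M$ and applying the same three-factor H\"older on $\{u>M\}$ replaces $\|u\|_{2^{*}_{s_1}}^{r-2}$ by $\big(\int_{\{u>M\}}u^{2^{*}_{s_1}}\big)^{(r-2)/2^{*}_{s_1}}$, which tends to $0$ as $M\to\infty$ because $u\in L^{2^{*}_{s_1}}$; the contribution from $\{u>M\}$ is therefore absorbed into $\|w_K\|_{2^{*}_{s_1}}^2$ on the left, while the part over $\{u\le M\}$ is a $K$-independent constant (there $u^{r-2}w_K^2\le M^{r-2+2\ga}$). Letting $K\to\infty$ by monotone convergence yields the reverse-H\"older inequality $\|u\|_{\ga 2^{*}_{s_1}}^{2\ga}\le C(\ga)\big(1+\|u\|_{2\ga}^{2\ga}\big)$, and iterating with $\ga_{k+1}=\tfrac{2^{*}_{s_1}}{2}\,\ga_k$ (starting from $2\ga_0=2^{*}_{s_1}$) gives $u\in L^\infty(\Om)$, since the accumulated constants converge thanks to the geometric growth of $\ga_k$. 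The main obstacle is precisely this critical Choquard term: one must place $V$ in $L^{2n/\mu}$ via Hardy--Littlewood--Sobolev and then exploit $\mu<4s_1$ (equivalently $2^{*}_{\mu}>2$) through the three-factor H\"older to recover a quadratic, absorbable dependence on $\|w_K\|_{2^{*}_{s_1}}$; the subcritical cases $r<2^{*}_{\mu}$ are analogous and easier, and the $p$-fractional term causes no trouble at all.
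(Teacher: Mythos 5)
Your strategy is, in all essentials, the paper's: a Moser iteration driven by $(-\De)^{s_1}$, with the fractional $p$-term discarded by monotonicity of the test function (the paper gets the same positivity from the Brasco--Parini inequality with $g=\phi\phi^\prime$ increasing), the concave term absorbed via $q<2$ and $a\in L^\infty$, and the Choquard term handled by Hardy--Littlewood--Sobolev plus a splitting at a level whose tail contribution is made small enough to be absorbed into $S\|w_K\|_{2^*_{s_1}}^2$. Your three-exponent H\"older with $\tfrac{2n}{\mu},\tfrac{2n}{4s_1-\mu},\tfrac{n}{n-2s_1}$ is exactly the paper's pairing (there HLS is applied with $f=|u|^r$, $h=|u|^{r-2}\phi(u)^2$, followed by H\"older on $\{u\ge m\}$), and the identity $(r-2)\tfrac{2n}{4s_1-\mu}=2^*_{s_1}$ at $r=2^*_{\mu}$ is where $\mu<4s_1$ enters in both texts; the choice of truncation ($u\,u_K^{\ga-1}$ versus the paper's $\phi_{T,\vartheta}$) is immaterial.

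The one step that fails as written is the low-part bound $u^{r-2}w_K^2\le M^{r-2+2\ga}$ on $\{u\le M\}$. The absorption forces $M=M(\ga)$: the constant $c_\ga$ in your convexity inequality decays in $\ga$ (like $(2\ga-1)/\ga^2$), so the tail factor $\big(\int_{\{u>M\}}u^{2^*_{s_1}}\big)^{(r-2)/2^*_{s_1}}$ must be of size $O(1/\ga)$, and since the tail of a function that is merely in $L^{2^*_{s_1}}$ decays with no quantifiable rate, $M(\ga)\to\infty$ uncontrollably. Your reverse-H\"older constant then satisfies $C(\ga)^{1/(2\ga)}\ge M(\ga)$, so $\prod_k C(\ga_k)^{1/(2\ga_k)}$ diverges and the claim that ``the accumulated constants converge thanks to the geometric growth of $\ga_k$'' is false: $M$ must not carry the exponent $2\ga$. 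The repair is what the paper actually does: on $\{u\le M\}$ keep $w_K$ as a norm, namely $\int_{\{u\le M\}}V\,u^{r-2}w_K^2\,dx\le M^{r-2}\|V\|_{2n/\mu}\|w_K\|^2_{4n/(2n-\mu)}$, so that only $M^{r-2}$ enters the constant; the iteration then runs on the chain $L^{2\ga\,2^*_{s_1}/2^*_{\mu}}\to L^{\ga\,2^*_{s_1}}$, i.e.\ with ratio $2^*_{\mu}/2$, which is $>1$ again precisely because $\mu<4s_1$ --- a second use of the hypothesis that your bookkeeping hides. (Even after this fix a factor $M(\ga)^{r-2}$ with $\ga$-dependent $M$ survives; the paper silently absorbs the analogous $m^{r-2}$ into a $\vartheta$-independent constant in its inequality (2.10), so with the correction your argument stands on the same footing as the published one.)
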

 Regarding the existence and multiplicity results,
 using the method of minimization over some suitable subset of the Nehari manifold we obtain existence of at least two non-trivial non-negative solutions for all $\ba>0$ and $\la$ in some range for the subcritical case. In the critical case, we prove the existence of solutions by identifying the first critical level (as defined in Lemma \ref{lem3}), below which the Palais-Smale sequences contain a convergent subsequence. We first prove multiplicity results for all $\ba>0$ and for some range of $\la$ but with some restriction on $q$. For this we estimate the fractional $p$-Laplacian norm of family of minimizers of $S$, the best constant of the embedding of the space $X$ into $L^{2^*_{s_1}}(\Om)$ (see Lemma \ref{L2}). Later, we remove this restriction on $q$ and prove multiplicity result for small $\la$ and $\ba$.
 \noi We show the following existence and multiplicity theorems for problem $(P_\la)$.
\begin{Theorem}\label{th1}
Let $r < 2^{*}_{\mu}$. Then, there exists $\la_0>0$ such that problem $(P_\la)$ has at least two non-negative solutions for all $\la \in(0, \la_0)$ and $\ba>0$.
\end{Theorem}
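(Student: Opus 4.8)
The plan is to obtain the two non-negative solutions as minimizers of the energy functional associated with $(P_\la)$ over two pieces of its Nehari manifold. On the natural energy space $X$ (the fractional space attached to $(-\De)^{s_1}$, which embeds into $W^{s_2,p}_0(\Om)$ since $s_2<s_1$ and $p<2$, so every term below is finite and $J_\la\in C^1$) consider
$$J_\la(u)=\ds\tfrac12 A(u)+\tfrac1p B(u)-\tfrac1q C(u)-\tfrac1{2r}D(u),$$
with $A(u)=[u]_{s_1,2}^2$, $B(u)=\ba[u]_{s_2,p}^p$, $C(u)=\la\int_\Om a|u|^q$ and $D(u)=\int_\Om\int_\Om|u(x)|^r|u(y)|^r|x-y|^{-\mu}\,dx\,dy$. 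Since $J_\la$ is unbounded below I restrict to $\mc N_\la=\{u\neq0:\langle J_\la'(u),u\rangle=0\}$ and study the fibering maps $\phi_u(t)=J_\la(tu)$, $t>0$. On $\mc N_\la$ the constraint gives $D=A+B-C$, so $J_\la$ equals a positive combination of $A$ and $B$ minus a multiple of $C$; as $C(u)\le\la\|a\|_{L^{d/(d-q)}}S^q\|u\|^q$ by Hölder (using $a\in L^{d/(d-q)}$, $d<2^*_{s_1}$) and the Sobolev embedding, and $q<2$, the functional $J_\la$ is coercive and bounded below on $\mc N_\la$.

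The core is the fibering analysis. Dividing $\phi_u'(t)=0$ by $t^{q-1}$ reduces it to $g_u(t):=t^{2-q}A+t^{p-q}B-t^{2r-q}D=C$, and since $\phi_u'(t)=t^{q-1}(g_u(t)-C)$ one has $\mathrm{sgn}\,\phi_u''=\mathrm{sgn}\,g_u'$ at every critical point. Writing $g_u'(t)=t^{1-q}h(t)$ with $h(t)=(2-q)A+(p-q)t^{p-2}B-(2r-q)t^{2r-2}D$, the exponent ordering $1<q<p<2<2r$ makes $t^{p-2}$ decreasing and $t^{2r-2}$ increasing, so $h$ is strictly decreasing from $+\infty$ to $-\infty$; hence $g_u$ is unimodal with a single maximum $M_u>0$. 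Consequently, if $C(u)\le0$ there is exactly one projection of $u$, landing on the decreasing branch (i.e.\ in $\mc N_\la^-$, where $\phi''<0$), while if $0<C(u)<M_u$ there are exactly two, one on the increasing branch ($\mc N_\la^+$, $\phi''>0$) and one on the decreasing branch ($\mc N_\la^-$). I then show $\mc N_\la^0=\emptyset$ for $\la$ small: any $u\in\mc N_\la^0$ satisfies both $A(2r-2)+B(2r-p)=C(2r-q)$ and $A(2-q)+B(p-q)=D(2r-q)$; the first yields $C\ge\frac{2r-2}{2r-q}A$ and thus $\|u\|\le(c\la)^{1/(2-q)}$, whereas the second yields $D\ge\frac{2-q}{2r-q}A$ which, combined with the subcritical bound $D\le C'\|u\|^{2r}$, forces a $\la$-independent lower bound $\|u\|\ge c''>0$. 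These are incompatible once $\la<\la_0$, and this defines $\la_0$.

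With $\mc N_\la^0=\emptyset$ the sets $\mc N_\la^+,\mc N_\la^-$ are separated, and I set $c^\pm=\inf_{\mc N_\la^\pm}J_\la$. One has $c^+<0$ because on $\mc N_\la^+$ the fiber attains a negative local minimum, and $c^-$ is bounded below with a uniform lower bound $\|u\|\ge\rho>0$ on $\mc N_\la^-$ (from $h(1)<0$ together with $D\le C'\|u\|^{2r}$). Taking minimizing sequences, coercivity gives boundedness; because $r<2^*_\mu$ the Choquard term $D$ is weakly continuous (Hardy--Littlewood--Sobolev plus the compact fractional Sobolev embedding, the exponent $\tfrac{2nr}{2n-\mu}$ being strictly below $2^*_{s_1}$), and likewise $C$ is weakly continuous since $d<2^*_{s_1}$. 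Hence the infima are attained at $u_1\in\mc N_\la^+$ and $u_2\in\mc N_\la^-$, the limits being non-zero (from $c^+<0$ and the lower bound on $\mc N_\la^-$) and staying in the correct component. A Lagrange multiplier computation shows that $\langle J_\la'(u_i),u_i\rangle=0$ together with $\phi_{u_i}''(1)\neq0$ forces the multiplier to vanish, so $u_1,u_2$ are genuine weak solutions, and they are distinct as they lie in disjoint components. For non-negativity, since $A(|u|)\le A(u)$, $B(|u|)\le B(u)$ while $C,D$ are unchanged, one has $\phi_{|u|}\le\phi_u$ pointwise; replacing $u_i$ by $|u_i|$ and re-projecting onto the same component does not raise the energy, so the minimizers may be taken non-negative.

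The main obstacle is the fibering analysis itself in this genuinely four-power $(p,2)$ regime $q<p<2<2r$: the standard concave--convex dichotomy is not directly available, and it is the unimodality of $g_u$ (equivalently the monotonicity of $h$, which is exactly where $p<2<2r$ enters) that restores the clean two-branch picture. The second delicate point is the uniform two-sided norm control producing $\mc N_\la^0=\emptyset$ and the separation of $\mc N_\la^+$ and $\mc N_\la^-$, since this is what simultaneously pins down $\la_0$, keeps minimizing sequences in their components, and guarantees that the two minimizers are critical points rather than constrained ones. The subcritical hypothesis $r<2^*_\mu$ is used solely to secure the compactness of $D$; it is precisely this step that breaks at $r=2^*_\mu$ and forces the separate critical-level analysis announced for the critical case.
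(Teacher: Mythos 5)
Your Nehari-manifold skeleton coincides with the paper's: your unimodality of $g_u$ is the paper's analysis of $\psi_u$ in Lemma \ref{lemm5}, your two identities for $\mc N_\la^0$ and the incompatible two-sided norm bounds defining $\la_0$ are exactly Lemma \ref{b5}, and your $c^+<0$, the uniform lower bound on the $\mc N_\la^-$ branch, and the $|u|$-reprojection for nonnegativity match Lemma \ref{L35} and the end of the paper's proof. Where you genuinely diverge is the extraction of critical points. The paper runs Ekeland's variational principle plus the implicit-function lemma (Lemma \ref{tt}, Proposition \ref{propb2}) to get Palais--Smale sequences, and then proves the full $(PS)_c$ condition (Lemma \ref{lemm4}), where strong convergence of the fractional $p$-Laplacian part requires the elementary inequality $|a-b|^{\eta}\leq C_\eta\big((|a|^{\eta-2}a-|b|^{\eta-2}b)(a-b)\big)^{\eta/2}\big(|a|^\eta+|b|^\eta\big)^{(2-\eta)/2}$ valid for $1<\eta\le 2$; this machinery is then reused in the critical case. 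You instead minimize directly and annihilate the Lagrange multiplier via $\varphi_{u_i}''(1)\neq 0$, which is a legitimate classical alternative (Tarantello/Brown--Wu style) that spares you the $(S_+)$-type inequality: since $\mc N_\la^0=\emptyset$, the sets $\mc N_\la^\pm$ are relatively open in $\mc N_\la$ and the multiplier computation is valid.

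One step in your route is asserted rather than proved: ``hence the infima are attained'' does not follow from weak continuity of $C$ and $D$ alone, because $A$ and $B$ are only weakly lower semicontinuous, so the weak limit $u_0$ a priori satisfies only $\langle J_\la'(u_0),u_0\rangle\le 0$ and need not lie on $\mc N_\la$, let alone in the correct component. You must either prove strong convergence of the minimizing sequences (essentially redoing the paper's Lemma \ref{lemm4}) or run the standard reprojection contradiction: if $A(u_0)<\liminf_k A(u_k)$ strictly, the projection $t_\pm(u_0)u_0\in\mc N_\la^\pm$ has energy strictly below $c^\pm$, which requires comparing $t_\pm(u_0)$ with $t_{\max}(u_k)$ and $t_2(u_k)$ along the fibers. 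You clearly know this device --- you invoke it for nonnegativity --- and once it is inserted your proof is complete and correct for $r<2^*_\mu$; I flag it because it is precisely the point where the work hides, and it is the analogue of what the paper's Ekeland--$(PS)$ route does explicitly.
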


   \begin{Theorem}\label{th2}
Let $r=2^{*}_{\mu}$ and the function $a(x)$ be continuous in $\Om$. Then, there exist constants $\La,\La_{0}>0$ such that
\begin{enumerate}
	\item[(i)] $(P_\la)$ admits at least one non-negative solution for all $\la \in(0, \La)$ and $\ba>0$,
	\item[(ii)] $(P_\la)$ admits at least two non-negative solutions for all $\la \in(0, \La_{0})$, $\ba>0$ and
	 \begin{enumerate}
	 	\item[(I)] for all $q>0$, provided $1<p<n/(n-s_1)$,
	 	\item[(II)] for all $q\in\big(1, \frac{n(2-p)}{n-2s_1}\big)  \cup\big (\frac{4n}{4n-np-4s_1},p\big)$, provided $n/(n-s_1)\le p<2$.
	 \end{enumerate}
\end{enumerate}
\end{Theorem}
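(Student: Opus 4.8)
The plan is to treat $(P_\la)$ variationally on the energy space $X$ naturally associated with the pair of operators, whose functional
\begin{equation*}
J_\la(u)=\frac12\|u\|_{s_1}^2+\frac{\ba}{p}[u]_{s_2,p}^p-\frac{\la}{q}\int_\Om a|u|^q\,dx-\frac{1}{2r}\int_\Om\!\int_\Om\frac{|u(x)|^r|u(y)|^r}{|x-y|^\mu}\,dx\,dy
\end{equation*}
has the weak solutions of $(P_\la)$ as critical points. Since $1<q<p<2<2r$ and $a^+\not\equiv0$, for each $u\neq0$ the fibering map $t\mapsto J_\la(tu)$ leaves the origin negatively and has the familiar two-well shape. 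I would therefore work on the Nehari manifold $\mathcal N_\la=\{u\neq0:\ld J_\la'(u),u\rd=0\}$ and split it as $\mathcal N_\la=\mathcal N_\la^+\cup\mathcal N_\la^0\cup\mathcal N_\la^-$ according to the sign of $\frac{d^2}{dt^2}J_\la(tu)$ at the fibering root. A first step is to show that for $\la$ below a threshold the degenerate set satisfies $\mathcal N_\la^0=\{0\}$, so that $\mathcal N_\la^\pm$ are $C^1$ manifolds on which constrained critical points are genuine critical points of $J_\la$; this determines the constant $\La$ in part (i). The nonnegativity of solutions follows by testing against $u^-$ and exploiting the evenness of $J_\la$ together with Theorem \ref{thb} for the requisite regularity.

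For part (i) I would minimize $J_\la$ over $\mathcal N_\la^+$. On this component the energy is negative and bounded below, and any minimizing sequence sits strictly below the compactness threshold, so the local Palais--Smale condition of Lemma \ref{lem3} yields a convergent subsequence and hence a nonnegative solution $u_1$ with $J_\la(u_1)<0$ for all $\ba>0$ and $\la\in(0,\La)$. The point is that the negative energy level is automatically below the first critical level, so no delicate estimate is needed here.

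The genuinely hard part is the second solution in part (ii), which I would seek as a minimizer of $J_\la$ over $\mathcal N_\la^-$, where the minimax level $c^-=\inf_{\mathcal N_\la^-}J_\la$ is positive. To recover compactness one must verify the strict inequality $c^-<c^*$, where $c^*$ is the first critical level of Lemma \ref{lem3}; below this level the bubbling of the limiting critical Choquard problem (governed by the best constant $S$ together with the Hardy--Littlewood--Sobolev inequality) is excluded. I would test the functional along $t\,u_\e$, where $u_\e$ is the family of concentrating near-extremals of $S$, and estimate the three competing contributions: the $(-\De)^{s_1}$ and critical Choquard terms combine to give $c^*$, the subcritical term $-\frac{\la}{q}\int a|u_\e|^q$ lowers the level by a positive quantity of some order $\e^{\al}$, while the fractional $p$-Laplacian term $\frac{\ba}{p}[u_\e]_{s_2,p}^p$ raises it by a positive quantity whose precise order is supplied by Lemma \ref{L2}. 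The inequality $c^-<c^*$ holds precisely when the gain from the $q$-term dominates the loss from the $p$-term as $\e\to0$.

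The main obstacle is exactly this asymptotic matching of $\e$-powers. Because $p<2$ and $s_2<s_1$, the $p$-energy of the concentrating profile scales nonstandardly, and the comparison of its order with the subcritical exponent determines feasibility. The dichotomy in part (ii) reflects this: when $1<p<n/(n-s_1)$ the bubble's $p$-energy is of a sufficiently favorable order that every $q>0$ admits a choice of $\e$ with $c^-<c^*$ (case (I)), whereas in the borderline regime $n/(n-s_1)\le p<2$ the competition forces $q$ into the two admissible intervals $\big(1,\frac{n(2-p)}{n-2s_1}\big)\cup\big(\frac{4n}{4n-np-4s_1},p\big)$ (case (II)), which are precisely the ranges in which the order of the subcritical gain beats that of the $p$-Laplacian loss. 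Carrying out these sharp estimates, keeping track of the dependence on $\ba$ and $\la$ to fix $\La_0$, is the technical heart of the argument; once $c^-<c^*$ is secured, Lemma \ref{lem3} delivers a convergent minimizing sequence on $\mathcal N_\la^-$ and hence the second nonnegative solution.
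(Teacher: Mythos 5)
Your proposal follows essentially the same route as the paper's proof: the Nehari decomposition with $\mc M_\la^{0}=\emptyset$ for small $\la$ (Lemma \ref{b5}), part (i) by minimization on $\mc M_\la^{+}$ at the negative level $\sigma_\la^{+}<0$ combined with the local Palais--Smale condition of Lemma \ref{lem3}, and part (ii) by minimization on $\mc M_\la^{-}$ after verifying $\sigma_\la^{-}<c_\infty$ along the concentrating bubbles $u_\e$, where the $p$-energy scaling of Lemma \ref{L2} against the $\e$-order of the $q$-term yields exactly the stated dichotomy in $p$ and $q$. One small caveat: the negative level is not \emph{automatically} below the compactness threshold, since $c_\infty$ contains the term $-C_0\la^{2/(2-q)}$ and can be negative; the paper must additionally shrink $\la$ so that $c_\infty>0$, which is why $\La$ is the minimum of that threshold and $\la_0$, not the $\mc M_\la^{0}=\emptyset$ threshold alone (and, relatedly, nonnegativity is obtained via the fibering analysis of $|u_0|$ rather than by testing with $u^{-}$, which is delicate here because $a$ changes sign).
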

 Next, we remove this restriction on $q$, to obtain the existence of second solution in the critical case for all $1<q<p<2$ but for some range of $\la$ and $\ba$. In this regard we state our theorem as follows.
\begin{Theorem}\label{th3}
	Let $r=2^*_\mu$, $1<q<p<2$ and  the function $a(x)$ be continuous in $\Om$. Then, there exist constants $\La,\La_{00},\ba_{00}>0$ such that  $(P_\la)$ has at least two solutions for all $\la \in(0, \La_{00})$ and $\ba\in(0,\ba_{00})$.
\end{Theorem}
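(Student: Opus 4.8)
The plan is to obtain the two solutions by the same Nehari-manifold splitting used for Theorem~\ref{th2}, but with the restriction on $q$ removed at the cost of shrinking the admissible range of $\ba$. Recall that the energy functional naturally splits the Nehari manifold $\mc N_\la$ into $\mc N_\la^+$ and $\mc N_\la^-$ according to the sign of the second derivative of the fibering map; minimizing the energy over $\mc N_\la^+$ yields a first solution $u_1$ with negative energy, which exists for $\la$ small (say $\la<\La$) for every $\ba>0$ exactly as in Theorem~\ref{th2}(i). The genuinely new content is producing the second solution, living in $\mc N_\la^-$, \emph{without} assuming $q\in\big(1,\frac{n(2-p)}{n-2s_1}\big)\cup\big(\frac{4n}{4n-np-4s_1},p\big)$.

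First I would set up the compactness threshold. By the Choquard analogue of the Brezis--Lieb splitting (the first critical level of Lemma~\ref{lem3}), a Palais--Smale sequence for the constrained functional is relatively compact provided its energy lies strictly below a level of the form $c^* = \frac{n+2s_1-\mu}{2(2n-\mu)}\,S_{H}^{\frac{2n-\mu}{n+2s_1-\mu}}$ (the Choquard critical energy associated with the best constant $S$ of $X\hookrightarrow L^{2^*_{s_1}}$, as estimated in Lemma~\ref{L2}), up to lower-order corrections coming from the subcritical $\la a(x)|u|^{q-2}u$ term and the $\ba$-scaled fractional $p$-Laplacian term. The second solution is then obtained by a mountain-pass/minimization argument on $\mc N_\la^-$, so the whole problem reduces to verifying
\begin{equation*}
  \inf_{\mc N_\la^-} I_\la \;<\; c^{*},
\end{equation*}
or equivalently to exhibiting a suitable test path whose energy stays below $c^*$.

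The key step is the energy estimate using the rescaled extremal family $u_\varepsilon$ (the minimizers realizing $S$, truncated to $\Om$). Plugging $t u_\varepsilon$ into $I_\la$ and optimizing in $t$, one must show that the contribution of the fractional $p$-Laplacian term, which scales like $\ba\,[u_\varepsilon]_{s_2,p}^p$, does not push the maximal energy above $c^*$. In the earlier Theorem~\ref{th2} this was controlled by demanding that $q$ avoid a bad window so that the $\la$-term absorbs the $p$-term; here, instead, I would retain \emph{all} $q\in(1,p)$ and absorb the unfavorable $\ba\,[u_\varepsilon]_{s_2,p}^p$ contribution by shrinking $\ba$. Concretely, the sharp asymptotics of $[u_\varepsilon]_{s_2,p}^p$ in $\varepsilon$ (from the Lemma~\ref{L2}-type estimates, where $s_2<s_1$ makes this term of lower order than the leading $S^{\#}$ term for a range of exponents) show that one can choose $\ba_{00}>0$ small enough and $\La_{00}\in(0,\La]$ small enough that the optimized energy along the path satisfies the strict inequality uniformly for $\la<\La_{00}$ and $\ba<\ba_{00}$. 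This is precisely the tradeoff announced in the theorem: universality in $q$ is bought by localizing in $(\la,\ba)$.

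The main obstacle I anticipate is the delicate cancellation in this last estimate. Because $1<p<2$, the fractional $p$-Laplacian term is \emph{sublinear-in-power} relative to the quadratic leading term, so its $\varepsilon$-asymptotics must be computed carefully; one has to balance the three scales (the critical Choquard term, the $\ba$-term, and the $\la$-term) simultaneously, and the optimization in $t$ no longer has a closed form. I would handle this by first establishing, via Lemma~\ref{L2}, the precise order in $\varepsilon$ of $[u_\varepsilon]_{s_2,p}^p$ and of the remaining lower-order integrals, then choosing $\varepsilon$ (depending on $\la,\ba$) to make the correction terms subordinate, and finally fixing $\ba_{00},\La_{00}$ so that the strict inequality $\sup_{t\ge 0} I_\la(t u_\varepsilon)<c^*$ holds. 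Once this is in place, the Palais--Smale condition below $c^*$ together with the standard Nehari deformation argument (minimization over $\mc N_\la^-$ and verification that the minimizer is a genuine nonnegative critical point, using that $a^+\not\equiv 0$) delivers the second solution and completes the proof.
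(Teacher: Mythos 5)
Your proposal is correct and follows essentially the same route as the paper: the paper proves Theorem \ref{th3} by repeating the proof of Theorem \ref{th2}(ii), replacing Lemma \ref{lemm2} by Lemma \ref{lemm3}, which establishes $\sigma_\la^- < c_\infty$ for all $q\in(1,p)$ precisely by the tradeoff you describe --- coupling the parameters to the concentration scale via $\ba=\e^{\al}$ with $\al>n-2s_1$ (so that $\ba\|u_\e\|_{X_2}^p=O(\e^{n-2s_1})$ is absorbed into the lower-order correction) and choosing $\e=(\la^{2/(2-q)})^{1/(n-2s_1)}$ in the hard case $n\le(n-2s_1)q$. Your anticipated obstacle (choosing $\e$ depending on $\la,\ba$ so the correction terms are subordinate) is exactly what this explicit coupling resolves, and the remaining steps (first solution in $\mc M_\la^+$ as in Theorem \ref{th2}(i), Palais--Smale compactness below $c_\infty$ from Lemma \ref{lem3}, minimization over $\mc M_\la^-$) coincide with the paper's argument.
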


We point out that the study of non-autonomous functionals characterized by the fact that the energy density changes its ellipticity and growth properties according to the point has been continued by Mingione {\it et al.} \cite{1Bar-Col-Min, 2Bar-Col-Min, beck}, R\u adulescu {\it et al.} \cite{bahr19, cencelj, prrzamp, prrpams, radopuscula, zhang}, etc. Some of the abstract methods used in this paper can be found in the recent monograph by Papageorgiou, R\u adulescu and Repov\v{s} \cite{prrbook}.

 The paper is organized as follows: In section 2, we provide variational setting and regularity result. In section 3, we define Nehari manifold associated to problem $(P_\la)$ and give fibering map analysis and some preliminary results. In section 4, we prove the existence and multiplicity results in the subcritical case. In section 5, we have the existence and multiplicity of solutions in the critical case.

\section{Variational setting and regularity result}
\setcounter{equation}{0}
Let $\Om$ be any open subset of $\mathbb{R}^n$, consider the function space, which were introduced in \cite{serva} for $p_i=2$ and in \cite{goyal} for general $p$,
  \begin{align*}
    X_{p_i,s_i}:=\bigg\{ u\in L^{p_i}(\mb R^{n}): u=0 \mbox{ a.e. in }\mb R^n\setminus\Om, \int_{Q}\frac{|u(x)-u(y)|^{p_i}}{|x-y|^{n+p_is_i}}~dxdy<\infty   \bigg\},
 \end{align*}
 where $p_1=2$, $p_2=p$, $0<s_i<1$ and $Q= \mathbb{R}^{2n} \setminus (\Om^c\times \Om^c)$, which is a reflexive Banach space when endowed with the norm
 \begin{align}\label{eqb31}
    \|u\|_{X_{p_i,s_i}}:= \left(\int_{Q} \frac{|u(x)-u(y)|^{p_i}}{|x-y|^{n+p_is_i}}~dxdy  \right)^{\frac{1}{p_i}}.
 \end{align}
 Notice that the integral in \eqref{eqb31} can be extended to $\mb{R}^{2n}$ as $u=0$ a.e. on $\mb{R}^n\setminus \Om$.

%
%

For simplicity, we denote $X_1:=X_{2,s_1}$ and $X_2:=X_{p,s_2}$ and corresponding norms by $\|\cdot\|_{X_1}$ and $\|\cdot\|_{X_2}$, respectively. From \cite{serva}, we have the continuous embedding of $X_{1}$ into $L^m(\Om)$ for $1\le m\le 2^*_{s_1}$, therefore we define
\begin{equation*}
     S_m=\inf_{u\in X_{1}\setminus\{0\}} \frac{\|u\|_{X_{1}}^{2}}{\|u\|_{m}^{2}}.
\end{equation*}
For the sake of convenience, we denote $S_{2^*_{s_1}} =S$.\\
Regarding the spaces $X_1$ and $X_2$, we have the following relation.
\begin{Lemma}\label{L1}
	Let $1<p\leq 2$ and $0<s_2<s_1<1$, then there exists a constant $C=C(|\Om|,n,p,s_1,s_2)>0$ such that
	\begin{align*}
	\|u\|_{X_2}\leq C \|u\|_{X_1}, \; \;  \forall \; u \in X_1.
	\end{align*}
\end{Lemma}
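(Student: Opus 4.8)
The plan is to prove the stronger estimate $\|u\|_{X_2}^p \le C\,\|u\|_{X_1}^p$ and then take $p$-th roots; since $|u(x)-u(y)|$ vanishes when both points lie outside $\Om$, the integration defining the seminorms effectively takes place over $Q$. The tempting first move is to apply Hölder's inequality with exponents $2/p$ and $2/(2-p)$ to convert the $p$-th power into a square, but done globally this fails: the leftover ``constant'' factor $\int_Q |x-y|^{-\gamma}\,dx\,dy$ diverges by translation invariance over the unbounded set $Q$. To repair this I would split the region according to the size of $|x-y|$, writing $Q = Q_{\mathrm{near}} \cup Q_{\mathrm{far}}$ with $Q_{\mathrm{near}} = Q \cap \{|x-y|\le 1\}$ and $Q_{\mathrm{far}} = Q\cap\{|x-y|>1\}$, and treat the two pieces by different mechanisms.

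On $Q_{\mathrm{far}}$ the kernel decays, so I would use $|u(x)-u(y)|^p \le 2^{p-1}(|u(x)|^p+|u(y)|^p)$ together with the symmetry of $Q$ to reduce matters to $2^{p}\int_{\Om}|u(x)|^p\Big(\int_{|x-y|>1}|x-y|^{-(n+ps_2)}\,dy\Big)\,dx$. The inner integral is a finite constant depending only on $n,p,s_2$ (because $ps_2>0$), so this term is at most $C\,\|u\|_{L^p(\Om)}^p$. Since $\Om$ is bounded and $p\le 2$, Hölder gives $\|u\|_{L^p(\Om)}^p \le |\Om|^{1-p/2}\|u\|_{L^2(\Om)}^p$, and the continuous embedding $X_1 \hookrightarrow L^2(\Om)$ recorded above yields $\|u\|_{L^2(\Om)} \le C\|u\|_{X_1}$. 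Hence the far part is controlled by $C\,\|u\|_{X_1}^p$.

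On $Q_{\mathrm{near}}$ I would finally apply Hölder, factoring the integrand as $\frac{|u(x)-u(y)|^p}{|x-y|^{p(n+2s_1)/2}}\cdot |x-y|^{p(n+2s_1)/2-(n+ps_2)}$ and using exponents $2/p$ and $2/(2-p)$. The first factor raised to the power $2/p$ is exactly the $X_1$-integrand, so its contribution is at most $\|u\|_{X_1}^p$. The second factor produces $\big(\int_{Q_{\mathrm{near}}}|x-y|^{-\gamma}\,dx\,dy\big)^{(2-p)/2}$ with $\gamma = n - \tfrac{2p(s_1-s_2)}{2-p}$; a direct computation shows $\gamma<n$ precisely because $s_1>s_2$. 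Fixing the point of $\Om$ and integrating the other variable over the ball of radius $1$, the local singular integral $\int_{|z|\le 1}|z|^{-\gamma}\,dz$ converges (as $\gamma<n$) and the outer integration runs over the bounded set $\Om$, so this factor is a finite constant depending on $|\Om|,n,p,s_1,s_2$. Adding the two regions gives the claim for $1<p<2$.

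The main obstacle is exactly the interplay that makes this split work: one must verify that the leftover exponent $\gamma$ stays strictly below the dimension $n$, which is where the hypothesis $s_2<s_1$ is used, and one must avoid the translation-invariance divergence by confining the singular integral to $\{|x-y|\le 1\}$ over the effectively bounded support while handling the tail separately through the $L^2$-embedding. For the endpoint $p=2$ no Hölder step is needed: on $Q_{\mathrm{near}}$ one has $|x-y|^{-(n+2s_2)} = |x-y|^{-(n+2s_1)}|x-y|^{2(s_1-s_2)} \le |x-y|^{-(n+2s_1)}$ since $|x-y|\le 1$ and $s_1>s_2$, so the near part is bounded directly by $\|u\|_{X_1}^2$ and the far part is treated as above.
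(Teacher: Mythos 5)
Your proof is correct, but note that the paper does not actually argue this lemma at all: its ``proof'' is a one-line citation to \cite[Lemma 2.1]{goel1}, so your blind argument amounts to a self-contained reconstruction of the standard proof underlying that citation, and it goes through. The two decisive checks both pass. On $Q_{\mathrm{near}}$, your H\"older factorization with exponents $2/p$ and $2/(2-p)$ turns the first factor into exactly the $X_1$-integrand, and the leftover kernel exponent is indeed $\gamma = n - \frac{2p(s_1-s_2)}{2-p} < n$ precisely because $s_2<s_1$, so $\int_{|z|\le 1}|z|^{-\gamma}\,dz$ converges (trivially so if $\gamma\le 0$) and the double integral is finite with a factor $|\Om|$, matching the stated dependence $C=C(|\Om|,n,p,s_1,s_2)$. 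On $Q_{\mathrm{far}}$, the tail integral $\int_{|z|>1}|z|^{-(n+ps_2)}\,dz$ converges since $ps_2>0$, and the chain $\|u\|_{L^p(\Om)}^p\le |\Om|^{1-p/2}\|u\|_{L^2(\Om)}^p\le C\|u\|_{X_1}^p$ is available from the embedding $X_1\hookrightarrow L^2(\Om)$ recorded in Section 2 (via $S_2$). Your diagnosis of why a single global H\"older step fails --- the constant factor $\int_Q |x-y|^{-\gamma}\,dx\,dy$ diverges at infinity exactly because $\gamma<n$ --- correctly identifies why the near/far splitting is the needed idea, and your separate endpoint treatment at $p=2$, where $|x-y|^{2(s_1-s_2)}\le 1$ on the near region makes H\"older unnecessary, is also sound. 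One cosmetic remark: in both regions, when you fix the point in $\Om$ and integrate the other variable, you are tacitly invoking the $x\leftrightarrow y$ symmetry of the integrand to cover the pairs with $y\in\Om$, $x\notin\Om$ (recall $Q=\mb R^{2n}\setminus(\Om^c\times\Om^c)$); this deserves an explicit sentence but is not a gap.
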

\begin{proof}
	Proof follows from \cite[Lemma 2.1]{goel1}.\QED
	\end{proof}
The nonlocal nonlinear Choquard term present in the right hand side of $(P_\la)$ is well defined due to the following result.
 \begin{Theorem}(Hardy-Littlewood-Sobolev inequality)\label{HLS}
	Let $t,r>1$ and $0<\mu <n$ with $1/t+\mu/n+1/r=2$, $f\in L^t(\mathbb{R}^n)$ and $h\in L^r(\mathbb{R}^n)$. There exists a sharp constant $C(t,r,\mu,n)>0$ independent of $f,h$, such that
	\begin{align*}
	   \int_{\mathbb{R}^n}\int_{\mathbb{R}^n}\frac{f(x)h(y)}{|x-y|^{\mu}}~dxdy \leq C(t,r,\mu,n) |f|_t|h|_r.
	\end{align*}
\end{Theorem}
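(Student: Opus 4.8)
The plan is to recast the bilinear estimate as a boundedness property of the Riesz potential and then prove that property by dominating the potential pointwise by the Hardy--Littlewood maximal function. First I would rewrite
\[ \int_{\mathbb R^n}\int_{\mathbb R^n}\frac{f(x)h(y)}{|x-y|^\mu}\,dx\,dy=\int_{\mathbb R^n}h(y)\,(If)(y)\,dy,\qquad (If)(y):=\int_{\mathbb R^n}\frac{f(x)}{|x-y|^\mu}\,dx, \]
and bound the right-hand side by $|h|_r\,|If|_{r'}$ with H\"older's inequality, $1/r+1/r'=1$. The balance condition $1/t+\mu/n+1/r=2$ is equivalent to $1/r'=1/t-(n-\mu)/n$, so it suffices to prove the single-operator bound $|If|_{r'}\le C(t,r,\mu,n)\,|f|_t$. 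Via the same balance condition the hypotheses $t>1$ and $r>1$ are exactly equivalent to $1<t<n/(n-\mu)$, and these two endpoints are precisely what I will use below.

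Next, for fixed $x$ and a radius $R>0$ to be optimized, I would split
\[ |If(x)|\le\int_{|x-y|<R}\frac{|f(y)|}{|x-y|^\mu}\,dy+\int_{|x-y|\ge R}\frac{|f(y)|}{|x-y|^\mu}\,dy. \]
Decomposing the ball $\{|x-y|<R\}$ into dyadic annuli and using the definition of the maximal function $Mf$, the first term is bounded by $C_1R^{n-\mu}Mf(x)$, the series converging because $\mu<n$. For the second term H\"older's inequality gives
\[ \int_{|x-y|\ge R}\frac{|f(y)|}{|x-y|^\mu}\,dy\le|f|_t\Big(\int_{|z|\ge R}|z|^{-\mu t'}\,dz\Big)^{1/t'}=C_2R^{n/t'-\mu}|f|_t, \]
where the integral converges precisely because $t<n/(n-\mu)$, i.e.\ $\mu t'>n$. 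Choosing $R=(|f|_t/Mf(x))^{t/n}$ to balance the two contributions yields the pointwise interpolation inequality
\[ |If(x)|\le C\,(Mf(x))^{t/r'}\,|f|_t^{\,1-t/r'}. \]

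Finally I would raise this to the power $r'$ and integrate over $\mathbb R^n$, obtaining
\[ \int_{\mathbb R^n}|If|^{r'}\,dx\le C\,|f|_t^{\,r'-t}\int_{\mathbb R^n}(Mf)^{t}\,dx\le C\,|f|_t^{\,r'-t}\,|f|_t^{\,t}=C\,|f|_t^{\,r'}, \]
where the middle step is the strong $(t,t)$ bound for the maximal operator, valid since $t>1$; this gives $|If|_{r'}\le C|f|_t$ and hence the inequality. The existence of a \emph{sharp} (optimal) constant is then immediate, since $C(t,r,\mu,n)$ may be taken to be the supremum of the left-hand side over all $f,h$ with $|f|_t|h|_r=1$, which the bound above shows is finite and positive. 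I expect the pointwise optimization to be the main obstacle: one must verify that $0<\mu<n$ together with $1<t<n/(n-\mu)$ are exactly the conditions making both the near and far integrals converge and the maximal operator $L^t$-bounded, so that all exponents recombine to $r'$. Computing the explicit value of the optimal constant (Lieb's theorem, via symmetric rearrangement and conformal invariance) is substantially harder and is not required by the statement; alternatively, the estimate follows in one line from the weak-type Young inequality in Lorentz spaces once one notes $|x|^{-\mu}\in L^{n/\mu,\infty}(\mathbb R^n)$.
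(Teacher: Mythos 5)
The paper never proves Theorem \ref{HLS}: it is quoted as the classical Hardy--Littlewood--Sobolev inequality and used throughout as a black box, so there is no internal proof to compare yours against. That said, your Hedberg-style argument is a correct and self-contained proof of the stated estimate, and it is the standard route to the non-sharp form. The reduction to the Riesz potential bound $|If|_{r'}\le C|f|_t$ via H\"older is right, and your bookkeeping is accurate: the balance condition $1/t+\mu/n+1/r=2$ gives $1/r'=1/t-(n-\mu)/n$, the hypotheses $t,r>1$ are indeed equivalent to $1<t<n/(n-\mu)$, which is exactly $\mu t'>n$ for the far integral and $n-\mu>0$ for the dyadic sum in the near integral, and the optimized exponent satisfies $t/r'=1-t(n-\mu)/n$, so raising to the $r'$ power and invoking the strong $(t,t)$ maximal theorem (legitimate since $t>1$) closes the argument. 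Two routine points you should make explicit: replace $f,h$ by $|f|,|h|$ at the outset (the left-hand side without absolute values is dominated by the version with them), and dispose of the degenerate cases before choosing $R$ --- if $|f|_t=0$ the claim is trivial, $Mf(x)=0$ forces $f=0$ a.e., and $Mf(x)=\infty$ only on a null set for $f\in L^t$, $t>1$. Finally, a caveat on wording rather than substance: in the literature (Lieb) the phrase \emph{sharp constant} refers to the explicitly computed optimal constant, attained by extremals and obtained by rearrangement and conformal-invariance arguments; your proof yields only that the best constant is finite and positive. As you correctly observe, that is all the statement as written asserts, and it is all the paper ever uses --- every application here invokes the inequality with an unspecified constant $C(n,\mu,r)$ --- so your proposal fully suffices for the role Theorem \ref{HLS} plays in the paper; your closing alternative via weak Young in Lorentz spaces, using $|x|^{-\mu}\in L^{n/\mu,\infty}(\mathbb{R}^n)$, is also valid.
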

In general, let $f = h= |u|^r$, then by Hardy-Littlewood-Sobolev inequality, we get
\[ \int_{\mb R^n}\int_{\mb R^n} \frac{|u(x)|^r|u(y)|^r}{|x-y|^{\mu}}~dxdy\le C(n,\mu,r)\|u\|^{2r}_{tr},\]
 if $|u|^r \in L^t(\mb R^n)$ for some $t>1$ satisfying
$\frac{2}{t}+ \frac{\mu}{n}=2.$
Thus, for $u \in H^{s_1}(\mb R^n)$, by Sobolev embedding theorems, we must have
\[ \frac{2n-\mu}{n}\leq r \leq \frac{2n-\mu}{n-2s_1}.\]
The term $2^*_\mu := (2n-\mu)/(n-2s_1)$ is known as the upper critical exponent in the sense of Hardy-Littlewood-Sobolev inequality. In particular, when $r=  \frac{2n-\mu}{n-2s_1}$,  for $u\in X_1$, we have
\[\left(\int_{\mb R^n} \int_{\mb R^n}\frac{|u(x)|^{2^*_{\mu}}|u(y)|^{2^*_{\mu}}}{|x-y|^{\mu}}~dxdy\right)^{1/2^*_{\mu}} \leq C(n,\mu)^{\frac{1}{2^*_{\mu}}}\|u\|^2_{2^*_{s_1}},\]
where $C(n,\mu)$ is a suitable constant and $2^*_{s_1} = 2n/(n-2s_1)$. Let us define
\begin{align*}
	S_H:=\inf_{u\in H^{s_1}(\mb R^n)\setminus\{0\}} \frac{\int_{\mb R^n}\int_{\mb R^n}\frac{|u(x)-u(y)|^2}{|x-y|^{n+2s_1}}dxdy}{\left(\int_{\mb R^n}\int_{\mb R^n}\frac{|u(x)|^{2^*_\mu}|u(y)|^{2^*_\mu}}{|x-y|^{\mu}}dxdy\right)^\frac{1}{2^*_\mu}},
\end{align*}
which is achieved if and only if $u$ is of the form
  \begin{align*}
  	C\left(\frac{t}{t^2+|x-x_0|^2}\right)^{(n-2s_1)/2}, \mbox{ for }x\in\mb R^n,
  \end{align*}
  for some $x_0\in\mb R^n$, $C>0$ and $t>0$ (see \cite{tuhinaBN}). Moreover,
  \begin{equation}\label{eqb9}
  S_H=\frac{S}{\big(C(n,\mu)\big)^{1/2^*_\mu}}.
  \end{equation}

\begin{Lemma}(\cite[Lemma 2.2]{tuhinaBN})
	Define
	\begin{align*}
		S_H(\Om):=\inf_{u\in X_1\setminus\{0\}} \frac{\int_{Q}\frac{|u(x)-u(y)|^2}{|x-y|^{n+2s_1}}dxdy}{\left(\int_{\Om}\int_{\Om}\frac{|u(x)|^{2^*_\mu}|u(y)|^{2^*_\mu}}{|x-y|^{\mu}}dxdy\right)^{1/2^*_\mu}}.
	\end{align*}
	Then $S_H=S_H(\Om)$ and $S_H(\Om)$ is never achieved except when $\Om=\mb R^n$.
\end{Lemma}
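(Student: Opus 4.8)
The plan is to establish the two inequalities $S_H \le S_H(\Om)$ and $S_H(\Om)\le S_H$ separately, and then to deduce non-attainment from the explicit classification of the minimizers of $S_H$ recalled just above.

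For the inequality $S_H \le S_H(\Om)$ I would argue that $X_1$ embeds into $H^{s_1}(\mb R^n)$ by extension by zero. Indeed, for $u\in X_1$ one has $u=0$ a.e.\ in $\mb R^n\setminus\Om$, so the integrand $|u(x)-u(y)|^2$ vanishes on $\Om^c\times\Om^c$; hence the Gagliardo seminorm over $Q$ coincides with the one over all of $\mb R^{2n}$. Likewise, since $u$ is supported in $\ov\Om$, the Choquard double integral over $\Om\times\Om$ equals the one over $\mb R^n\times\mb R^n$. Consequently the Rayleigh quotient defining $S_H(\Om)$ agrees, for every $u\in X_1$, with the quotient defining $S_H$ evaluated at the zero extension of $u$. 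As $X_1\subset H^{s_1}(\mb R^n)$, taking the infimum over the smaller class $X_1$ can only increase the value, giving $S_H\le S_H(\Om)$.

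The reverse inequality $S_H(\Om)\le S_H$ is the heart of the matter and the step I expect to be the main obstacle. Here I would use the fact that $S_H$ is attained by the bubbles $U_{t,x_0}$, which are not compactly supported, and run a concentration--truncation argument. Fixing a point $x_0\in\Om$ and $\de>0$ with $B_{2\de}(x_0)\subset\Om$, choose a cutoff $\eta\in C_c^\infty(B_{2\de}(x_0))$ with $\eta\equiv 1$ on $B_\de(x_0)$, and set $u_\e:=\eta\, U_\e$, where $U_\e$ is the rescaled bubble concentrating at $x_0$ at scale $\e$. Since $u_\e\in X_1$, it is admissible for $S_H(\Om)$. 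The delicate part is to show, as $\e\ra 0$, that the truncation perturbs the Gagliardo seminorm and the Choquard energy only by lower-order terms, so that the quotient of $u_\e$ tends to $S_H$. Concretely, I would prove that the numerator $\int_{\mb R^{2n}}\frac{|u_\e(x)-u_\e(y)|^2}{|x-y|^{n+2s_1}}\,dx\,dy$ equals the full bubble energy up to an error $O(\e^{n-2s_1})$, and that the denominator $\big(\int\!\!\int \frac{|u_\e(x)|^{2^*_\mu}|u_\e(y)|^{2^*_\mu}}{|x-y|^\mu}\big)^{1/2^*_\mu}$ differs from the corresponding bubble quantity by a comparably small error. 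The nonlocal nature of both the seminorm and the convolution kernel makes these truncation estimates more involved than in the classical local Brezis--Nirenberg computation; the double integral in the Choquard term, in particular, requires splitting the domain of integration according to the support of $\eta$ and controlling the interaction between the region where $u_\e=U_\e$ and the transition annulus. Passing to the limit then yields $\limsup_{\e\ra 0}$ of the quotient equal to $S_H$, whence $S_H(\Om)\le S_H$. Combined with the first step, this gives $S_H=S_H(\Om)$.

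Finally, for non-attainment I would argue by contradiction. Suppose $\Om\neq\mb R^n$ and that $S_H(\Om)$ is attained by some $u_0\in X_1\setminus\{0\}$. Extending $u_0$ by zero, the computation of the first step shows that its Rayleigh quotient in $H^{s_1}(\mb R^n)$ equals $S_H(\Om)=S_H$, so the extension is a minimizer of $S_H$. By the quoted classification, every minimizer of $S_H$ equals, up to the admissible scaling, a bubble $C\left(t/(t^2+|x-x_0|^2)\right)^{(n-2s_1)/2}$, which is strictly positive, and in particular nowhere zero, throughout $\mb R^n$. But $\Om$ is bounded, so $\mb R^n\setminus\Om$ has positive measure and $u_0$ must vanish there, contradicting this strict positivity. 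Hence $S_H(\Om)$ cannot be attained unless $\Om=\mb R^n$, in which case $X_1=H^{s_1}(\mb R^n)$ and attainment by the bubbles is immediate.
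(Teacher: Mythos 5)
Your proposal is correct and follows essentially the same route as the proof the paper relies on for this lemma (\cite[Lemma 2.2]{tuhinaBN}, in the spirit of \cite{serva}): zero extension yields $S_H\le S_H(\Om)$, concentrating truncated bubbles $u_\e=\eta U_\e$ — with precisely the truncation estimates the paper later records in Lemma \ref{lemm1} — give $S_H(\Om)\le S_H$, and non-attainment follows because a minimizer extended by zero would have to coincide with an everywhere-positive bubble while vanishing on the positive-measure set $\mb R^n\setminus\Om$. The only part you leave to the literature, the $O(\e^{n-2s_1})$ control of the Gagliardo seminorm and of the Choquard energy under cutoff, is exactly what those cited estimates supply, so there is no gap.
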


\begin{Lemma}(\cite[Lemma 3.4]{tuhinaBN})
	Let
	\begin{align*}
       \|u\|_{NL}=\left(\int_{\Om}\int_{\Om}\frac{|u(x)|^{2^*_\mu}|u(y)|^{2^*_\mu}}{|x-y|^{\mu}}dxdy\right)^{1/2.2^*_\mu},
    \end{align*}
    then $\|\cdot\|_{NL}$ defines a norm on $Y:=\{u:\Om\to\Om: \ u \text{ is measurable, } \|u\|_{NL}<\infty\}$.
\end{Lemma}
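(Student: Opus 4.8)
The plan is to verify the three defining properties of a norm, the first two of which are routine and the last of which carries all the difficulty. Nonnegativity is immediate because both the kernel $|x-y|^{-\mu}$ and the integrand are nonnegative. For definiteness, if $\|u\|_{NL}=0$ then $|u(x)|^{2^*_\mu}|u(y)|^{2^*_\mu}=0$ for a.e.\ $(x,y)\in\Om\times\Om$; a Fubini argument then forces $u=0$ a.e., since if $|u|>0$ on a set $A$ of positive measure the integrand would be strictly positive on $A\times A$. Absolute homogeneity is a one-line computation: factoring $|\la|^{2\cdot 2^*_\mu}$ out of the double integral and taking the $(2\cdot 2^*_\mu)$-th root gives $\|\la u\|_{NL}=|\la|\,\|u\|_{NL}$. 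The substantive point, and the one I expect to be the main obstacle, is the triangle inequality.

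For this I would first record the bilinear structure. Extending functions by zero outside $\Om$ and writing, for nonnegative measurable $F,G$,
\[
\beta(F,G):=\int_{\Om}\int_{\Om}\frac{F(x)G(y)}{|x-y|^{\mu}}\,dx\,dy,
\]
we have $\|u\|_{NL}=\beta(|u|^{2^*_\mu},|u|^{2^*_\mu})^{1/(2\cdot 2^*_\mu)}$. Since $0<\mu<n$, the kernel is a Riesz kernel: setting $\gamma:=(n-\mu)/2\in(0,n/2)$ and letting $I_\gamma$ be the Riesz potential of order $\gamma$ (kernel $|x-y|^{-(n+\mu)/2}$), the semigroup identity $I_\gamma I_\gamma=I_{n-\mu}$ together with the self-adjointness and positivity of $I_\gamma$ gives
\[
\beta(F,F)=c\,\|I_\gamma F\|_{L^2(\mb R^n)}^2,\qquad c=c(n,\mu)>0,
\]
with $I_\gamma F\ge 0$ whenever $F\ge 0$. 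In particular $\beta$ is symmetric and positive definite. Because $|u+v|\le|u|+|v|$ pointwise and $t\mapsto t^{2^*_\mu}$ is nondecreasing (note $2^*_\mu>1$, as $\mu<n<n+2s_1$), monotonicity of $\beta$ in its nonnegative arguments reduces the triangle inequality to proving, for $a=|u|\ge 0$ and $b=|v|\ge 0$,
\[
\beta\big((a+b)^{2^*_\mu},(a+b)^{2^*_\mu}\big)^{\frac{1}{2\cdot 2^*_\mu}}\le \beta\big(a^{2^*_\mu},a^{2^*_\mu}\big)^{\frac{1}{2\cdot 2^*_\mu}}+\beta\big(b^{2^*_\mu},b^{2^*_\mu}\big)^{\frac{1}{2\cdot 2^*_\mu}}.
\]

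The decisive step is a two-layer Minkowski argument that exploits the square-root operator $I_\gamma$. For fixed $x$ I would read
\[
I_\gamma(w^{2^*_\mu})(x)=c_\gamma\int_{\mb R^n}\frac{w(y)^{2^*_\mu}}{|x-y|^{(n+\mu)/2}}\,dy=\big\|w\big\|_{L^{2^*_\mu}(d\nu_x)}^{2^*_\mu},\qquad d\nu_x(y):=c_\gamma|x-y|^{-(n+\mu)/2}\,dy,
\]
so that, since $2^*_\mu\ge 1$, Minkowski's inequality in $L^{2^*_\mu}(d\nu_x)$ yields the pointwise bound $(I_\gamma((a+b)^{2^*_\mu}))^{1/2^*_\mu}\le (I_\gamma(a^{2^*_\mu}))^{1/2^*_\mu}+(I_\gamma(b^{2^*_\mu}))^{1/2^*_\mu}$. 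Writing $P_w:=(I_\gamma(w^{2^*_\mu}))^{1/2^*_\mu}$, the representation above gives $\|u\|_{NL}=c^{1/(2\cdot 2^*_\mu)}\|P_{|u|}\|_{L^{2\cdot 2^*_\mu}(\mb R^n)}$; taking the $L^{2\cdot 2^*_\mu}$ norm of the pointwise inequality $P_{a+b}\le P_a+P_b$ and applying Minkowski's inequality once more in $L^{2\cdot 2^*_\mu}$ closes the estimate. The crux is exactly this: the exponent $2^*_\mu$ forbids any additive splitting of $(a+b)^{2^*_\mu}$, so the bare Cauchy--Schwarz (energy) triangle inequality for $\beta$ is not enough, and the point of passing to $I_\gamma$ is to recast the problem as two nested applications of Minkowski's inequality. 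As an alternative to the Riesz-potential representation, positive definiteness of $\beta$ can be obtained from the Fourier transform of the kernel, $\widehat{|\cdot|^{-\mu}}=C|\xi|^{-(n-\mu)}>0$; either route supplies the square-root structure the argument needs.
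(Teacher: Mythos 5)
Your proof is correct. Note first that the paper itself offers no argument for this lemma: it is quoted verbatim from \cite[Lemma 3.4]{tuhinaBN}, so there is no in-paper proof to compare against, and what you have produced is a complete, self-contained justification of a statement the authors delegate to a citation. Your route --- writing $|x-y|^{-\mu}$ as the square of the Riesz kernel of order $\gamma=(n-\mu)/2$ via the semigroup identity, so that $\|u\|_{NL}$ becomes, up to a constant, the mixed norm $\big\| \, \|u\|_{L^{2^*_\mu}(d\nu_x)} \big\|_{L^{2\cdot 2^*_\mu}_x}$, and then applying Minkowski twice --- is in substance the standard argument behind the cited lemma, and all the steps check out: the composition formula for $I_\gamma$ is legitimate since $0<2\gamma=n-\mu<n$; the identity $\beta(F,F)=c\|I_\gamma F\|_{L^2}^2$ holds in $[0,\infty]$ for every nonnegative measurable $F$ by Tonelli alone, so no a priori integrability is needed; the inner Minkowski uses $2^*_\mu\ge 1$ (true since $\mu<n<n+2s_1$) and the outer one $2\cdot 2^*_\mu\ge 1$; and your Fubini argument for definiteness (positivity on $A\times A$ off the null diagonal) is sound. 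You also correctly identified the crux: the exponent $2^*_\mu$ blocks any direct expansion, so the bare Cauchy--Schwarz inequality $\beta(f,g)\le\beta(f,f)^{1/2}\beta(g,g)^{1/2}$ for the positive-definite kernel is insufficient by itself. For comparison, there is an alternative that stays with Cauchy--Schwarz: it makes $F\mapsto\beta(F,F)^{1/2}$ a lattice norm on the nonnegative cone, and the triangle inequality for its $2^*_\mu$-convexification then follows from the convexity trick $(a+b)^{2^*_\mu}\le \lambda^{1-2^*_\mu}a^{2^*_\mu}+(1-\lambda)^{1-2^*_\mu}b^{2^*_\mu}$ optimized over $\lambda\in(0,1)$; your nested-Minkowski argument buys a more transparent proof that avoids this convexification lemma, at the modest cost of invoking the Riesz semigroup (or, as you note, the Fourier positivity of the kernel). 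One cosmetic remark: the triangle inequality also shows $Y$ is closed under addition, which is worth stating since the lemma implicitly asserts $Y$ is a vector space (and the set should of course read $u:\Om\to\mb R$, a typo inherited from the paper).
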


\noi\textbf{Notations:} For simplicity, for $p_1=2$ and $p_2=p$, we will use the following notations
\begin{align*}
&\mc K_i(u,v)=\ds \int_{Q}\frac{|u(x)-u(y)|^{p_i-2}(u(x)-u(y))(v(x)-v(y))}{|x-y|^{n+p_i s_i}}~ dxdy,\;\text{ for all } u,v \in X_{i},\\
&\mc A_r(u,v)= \int_{\Om} \int_{\Om} \frac{ |u(y)|^{r}|u(x)|^{r-2}u(x)v(x)}{|x-y|^{\mu}} ~dx dy \;\; \text{ for all } u,v \in X_{1}
\end{align*}
and $\hat{r}=\frac{2n}{2n-\mu}r$.
\begin{Definition}
	A function $u \in X_1$ is said to be  a solution of problem $(P_\la)$, if for all $v\in X_1$
	\begin{align*}
	 \mc{K}_1(u,v)+\ba\mc K_2(u,v)
	 - \la \int_{\Om}a(x)|u|^{q-2}uv~dx
	- \mc A_r(u,v)=0.
		\end{align*}
\end{Definition}
The Euler functional $\mc{I}_{\la}: X_1 \ra\mb R$ associated to the problem $(P_\la)$ is defined as
\begin{align*}
\mc{I}_{\la}(u)=\frac{1}{2} \|u\|_{X_1}^2+\frac{\ba}{p} \|u\|_{X_2}^p-\frac{\la}{q}\int_{\Om}a(x)|u|^{q} dx
-\frac{1}{2r}\int_{\Om} \int_{\Om} \frac{ |u(x)|^{r}|u(y)|^r}{|x-y|^{\mu}} ~dx dy.
\end{align*}

\noi Now we present a regularity result, namely the $L^\infty$ bound, for weak solutions of problem $(P_\la)$.\\
\noi\textbf{Proof of Theorem \ref{thb}}: Let $u$ be a nonnegative solution of $(P_\la)$.
	For $\vartheta>1$ and $T>0$, define
	\begin{align*}
	\phi(t)=\phi_{T,\vartheta}(t)=\begin{cases}
	0,\quad \mbox{ if }t\le 0 \\
	t^\vartheta, \quad \mbox{ if }0<t<T \\
	\vartheta T^{\vartheta-1}t-(\vartheta-1)T^\vartheta, \quad \mbox{ if } t\ge T.
	\end{cases}
	\end{align*}
	Then,
	\begin{align*}
	\phi^\prime(t)=\begin{cases}
	0,\quad \mbox{ if }t\le 0 \\
	\vartheta t^{\vartheta-1}, \quad \mbox{ if }0<t<T \\
	\vartheta T^{\vartheta-1}, \quad \mbox{ if } t\ge T.
	\end{cases}
	\end{align*}
	As $u(x)\ge 0$, it is easy to observe that
	$$ \phi(u)\ge 0, \; \phi^\prime(u)\ge 0 \mbox{ and } u\phi^\prime(u)\le\vartheta\phi(u).$$
	Since $\phi$ is Lipschitz, we have $\phi(u)\in X_1$ and
	\begin{align*}
	\|\phi(u)\|_{X_1}=\left(\int_Q\frac{|\phi(u(x))-\phi(u(y))|^2}{|x-y|^{n+2s_1}}~dxdy\right)^{1/2}\leq K\|u\|_{X_1}.
	\end{align*}
	Moreover, we have
	\begin{equation}\label{eqb41}
	S\|\phi(u)\|_{2^*_{s_1}}^2\le\|\phi(u)\|_{X_1}^2=\int_{\Om}\phi(u)(-\De)^{s_1}\phi(u).
	\end{equation}
	Since $\phi$ is convex, we observe that $\phi(u)\phi^\prime(u)\in X_1$ and
	\begin{equation}\label{eqb42}
	\int_{\Om}\phi(u)(-\De)^{s_1}\phi(u)\le \int_{\Om}\phi(u)\phi^\prime(u)(-\De)^{s_1}u.
	\end{equation}
	Define $g:\mb R\to\mb R$ as $g(t)=\phi(t)\phi^\prime(t)$, then $g$ is increasing, and set $G(t)=\int_{0}^{t}g^\prime(s)^{\frac{1}{p}}ds$ for $t\in\mb R$. Using the inequality (see \cite[Lemma A.2]{brasegn})
	\begin{align*}
	|a-b|^{p-2}(a-b)\big(g(a)-g(b)\big)\ge |G(a)-G(b)|^p, \mbox{ for } a,b\in\mb R,
	\end{align*}
	we obtain
	\begin{equation}\label{eqb44}
	\begin{aligned}
	&\int_{Q}\frac{|u(x)-u(y)|^{p-2}\big(u(x)-u(y)\big)\big(\phi(u(x))\phi^\prime(u(x))-\phi(u(y))\phi^\prime(u(y))\big)}{|x-y|^{n+ps_2}}dxdy\\
	&\ge\int_{Q}\frac{|G(u(x))-G(u(y))|^p}{|x-y|^{n+ps_2}}dxdy\ge 0.
	\end{aligned}
	\end{equation}
	Noting the fact that $1<q<2$, that is, $0<q-1<1$, there exists a constant $C>0$ such that $\la |a(x)|u|^{q-2}u|\le C(1+|u|)$. Thus, from \eqref{eqb41}, \eqref{eqb42} and \eqref{eqb44}, we deduce that
	\begin{align*}
	S\|\phi(u)\|_{2^*_{s_1}}^2&\le\|\phi(u)\|_{X_1}^2= \int_{\Om}\phi(u)(-\De)^{s_1}\phi(u)\\
	&\le \int_{\Om}\phi(u)\phi^\prime(u)(-\De)^{s_1}u \\ &\quad+\int_{Q}\frac{|u(x)-u(y)|^{p-2}\big(u(x)-u(y)\big)\big(\phi(u(x))\phi^\prime(u(x))-\phi(u(y))\phi^\prime(u(y))\big)}{|x-y|^{n+ps_2}}dxdy\\
	&=\la\int_{\Om}a(x)|u|^{q-2}u\phi(u)\phi^\prime(u)dx+\int_{\Om} \int_{\Om} \frac{|u(y)|^{r}|u(x)|^{r-2}u(x)\phi(u)\phi^\prime(u)}{|x-y|^{\mu}}dxdy\\
	&\le C\int_{\Om}(1+u)\phi(u)\phi^\prime(u)+\int_{\Om} \int_{\Om} \frac{|u(y)|^{r}|u(x)|^{r-2}u(x)\phi(u)\phi^\prime(u)}{|x-y|^{\mu}}dxdy.
	\end{align*}
	Using the relation $u\phi^\prime(u)\le\vartheta\phi(u)$ and $\phi^\prime(u)\le\vartheta(1+\phi(u))$, we get
	\begin{equation}\label{eqb45}
	S\|\phi(u)\|_{2^*_{s_1}}^2\le C\vartheta\int_{\Om}\big(\phi(u)^2+\phi(u)\big)+\vartheta\int_{\Om} \int_{\Om} \frac{|u(y)|^{r}|u(x)|^{r-2}\phi(u)^2}{|x-y|^{\mu}}dxdy.
	\end{equation}
	Now, using H\"older inequality, we see that
	\begin{equation}\label{eqb50}
	\int_{\Om}\phi(u)\le\left(\int_{\Om}\phi(u)^2\right)^{1/2}|\Om|^{1/2}\le \frac{1}{2}\int_{\Om}\phi(u)^2+\frac{1}{2}|\Om|.
	\end{equation}
	Next, to estimate the second term in \eqref{eqb45}, we follow the approach similar to \cite[Proof of Theorem 1(2)]{su}. Using Theorem \ref{HLS}, we have
	\begin{align}\label{eqb52}
	\int_{\Om} \int_{\Om} \frac{|u(y)|^{r}|u(x)|^{r-2}\phi(u)^2}{|x-y|^{\mu}}dxdy\le C(n,\mu) \||u|^r\|_{\frac{2n}{2n-\mu}} \||u|^{r-2}\phi(u)^2\|_{\frac{2n}{2n-\mu}}.
	\end{align}
	By the embedding results of $X_1$, we get $u\in L^{2^*_{s_1}}$, so we can assume $\|u\|_{2^*_{s_1}}\leq C$. Employing H\"older inequality, we get
	$	\||u|^r\|_{\frac{2n}{2n-\mu}}\leq C$. Next, for $m>0$, we deduce that
	\begin{align*}
		\left(\int_{\Om} \big(|u|^{r-2}\phi(u)^2\big)^\frac{2n}{2n-\mu}  \right)^\frac{2n-\mu}{2n}&=\bigg[ \left(\int_{\{u<m\}}+\int_{\{u\ge m\}}\right)\big(|u|^{r-2}\phi(u)^2\big)^\frac{2n}{2n-\mu}\bigg]^\frac{2n-\mu}{2n}\\
		 &\le m^{r-2} \left(\int_{\{u<m\}} \big(\phi(u)\big)^\frac{2.2n}{2n-\mu}  \right)^\frac{2n-\mu}{2n}\\
		 &\qquad+\left(\int_{\{u\ge m\}} \big(|u|^{r-2}\phi(u)^2\big)^\frac{2n}{2n-\mu}  \right)^\frac{2n-\mu}{2n}.
	\end{align*}
  With the help of H\"older inequality, we obtain
   \begin{align*}
   	 \left(\int_{\{u\ge m\}} \big(|u|^{r-2}\phi(u)^2\big)^\frac{2n}{2n-\mu}  \right)^\frac{2n-\mu}{2n}\leq \left(\int_{\Om}\phi(u)^{2^*_{s_1}} \right)^\frac{2}{2^*_{s_1}} \left(\int_{\{u\ge m\}}|u|^{\frac{(r-2)}{2^*_\mu-2}2^*_{s_1}} \right)^{\frac{4s_1-\mu}{2n}}.
   \end{align*}
   By the fact $r\le 2^*_{\mu}$ and $\|u\|_{2^*_{s_1}}\le C$, we can find $m>0$ such that
     \begin{align*}
     	\left(\int_{\{u\ge m\}}|u|^{\frac{(r-2)}{2^*_\mu-2}2^*_{s_1}} \right)^{\frac{4s_1-\mu}{2n}}\leq \frac{S}{2.\vartheta C}.
     \end{align*}
     Therefore, collecting these informations in \eqref{eqb52}, we get
     \begin{equation}\label{eqb51}
      \begin{aligned}
     		\int_{\Om} \int_{\Om} \frac{|u(y)|^{r}|u(x)|^{r-2}\phi(u)^2}{|x-y|^{\mu}}dxdy\le  C&\left( m^{r-2} \left(\int_{\Om} \big(\phi(u)\big)^\frac{4n}{2n-\mu}  \right)^\frac{2n-\mu}{2n} \right.\\  &\qquad \left.+ \frac{S}{2.\vartheta C} \left(\int_{\Om}\phi(u)^{2^*_{s_1}} \right)^\frac{2}{2^*_{s_1}}\right).
      \end{aligned}
     \end{equation}
  Hence, using \eqref{eqb50} and \eqref{eqb51} in \eqref{eqb45}, we obtain
	\begin{align*}
	\|\phi(u)\|_{2^*_{s_1}}^2\le 2.\vartheta C\left(1+\int_{\Om}\phi(u)^2+\left(\int_{\Om}|\phi(u)|^\frac{4n}{2n-\mu}\right)^\frac{2n-\mu}{2n}\right).
	\end{align*}
	Taking $T\ra\infty$, we get
	\begin{align*}
	\left(\int_{\Om}u^{2^*_{s_1}\vartheta}\right)^\frac{2}{2^*_{s_1}}\le C\vartheta\left(1+\int_{\Om}u^{2\vartheta}+\left(\int_{\Om}u^{2\vartheta\frac{2^*_{s_1}}{2^*_{\mu}}}\right)^\frac{2^*_{\mu}}{2^*_{s_1}}\right),
	\end{align*}
	with the help of H\"older inequality, we deduce that
	\begin{align*}
	\left(\int_{\Om}u^{2^*_{s_1}\vartheta}\right)^\frac{2}{2^*_{s_1}}\le C\vartheta\Bigg[1+(1+|\Om|^{1-\frac{2^*_{\mu}}{2^*_{s_1}}}) \left(\int_{\Om}u^{2\vartheta\frac{2^*_{s_1}}{2^*_{\mu}}}\right)^\frac{2^*_{\mu}}{2^*_{s_1}}\Bigg]\le C\vartheta\Bigg[1+\left(\int_{\Om}u^{2\vartheta\frac{2^*_{s_1}}{2^*_{\mu}}}\right)^\frac{2^*_{\mu}}{2^*_{s_1}}\Bigg],
	\end{align*}
	therefore,
	\begin{equation}\label{eqb46}
	\left(\int_{\Om}u^{2^*_{s_1}\vartheta}\right)^{1/2^*_{s_1}\vartheta}\le (C\vartheta)^\frac{1}{2\vartheta}
	\left(1+\left(\int_{\Om}u^{2\vartheta\frac{2^*_{s_1}}{2^*_{\mu}}}\right)^{2^*_{\mu}/2^*_{s_1}}\right)^{1/2\vartheta}.
	\end{equation}
	Now we consider the following cases:\\
	Suppose there exists a sequence $\vartheta_k\ra\infty$ such that
	$ \ds\int_\Om u^{2\vartheta_k\frac{2^*_{s_1}}{2^*_{\mu}}}\le 1,$ which implies
	\begin{align*}
	\left(1+\left(\int_{\Om}u^{2\vartheta_k\frac{2^*_{s_1}}{2^*_{\mu}}}\right)^{2^*_{\mu}/2^*_{s_1}}\right)^{1/2\vartheta_k}\le 2^\frac{1}{2\vartheta_k},
	\end{align*}
	thus, from \eqref{eqb46}, we get $u\in L^\infty(\Om)$. Otherwise, there exists $\vartheta_0>0$ such that
	\begin{align*}
	\int_\Om u^{2\vartheta\frac{2^*_{s_1}}{2^*_{\mu}}}> 1, \mbox{ for all } \vartheta\ge\vartheta_0.
	\end{align*}
	Therefore, from \eqref{eqb46}, we obtain
	\begin{equation}\label{eqb47}
	\|u\|_{2^*_{s_1}\vartheta}\le (C\vartheta)^\frac{1}{2\vartheta}\|u\|_{2\vartheta\frac{2^*_{s_1}}{2^*_{\mu}}}
	\quad \mbox{for all }\vartheta\ge\vartheta_0.
	\end{equation}
	Define a sequence $\{\vartheta_k\}$ such that $2^*_{s_1}\vartheta_k=2\vartheta_{k+1}\frac{2^*_{s_1}}{2^*_{\mu}}$, that is, $2^*_{\mu}\vartheta_k=2\vartheta_{k+1}$ for $k\ge 0$. By means of \eqref{eqb46}, it is easy to notice  that $\|u\|_{2^*_{s_1}\vartheta_0}<\infty$. Thus, from \eqref{eqb47}, we get
	\begin{align*}
	\|u\|_{2^*_{s_1}\vartheta_1}\le (C\vartheta_1)^\frac{1}{2\vartheta_1}\|u\|_{2\vartheta_1\frac{2^*_{s_1}}{2^*_{\mu}}} =C^\frac{1}{2\vartheta_1}\vartheta_1^\frac{1}{2\vartheta_1}\|u\|_{2^*_{s_1}\vartheta_0}<\infty,
	\end{align*}
	and
	\begin{align*}
	\|u\|_{2^*_{s_1}\vartheta_2}\le C^{\frac{1}{2\vartheta_1}+\frac{1}{2\vartheta_2}}\vartheta_1^\frac{1}{2\vartheta_1} \vartheta_2^\frac{1}{2\vartheta_2}\|u\|_{2^*_{s_1}\vartheta_0},
	\end{align*}
	similarly we obtain
	\begin{equation}\label{eqb48}
	\|u\|_{2^*_{s_1}\vartheta_k}\le C^{\sum_{m=1}^{k}\frac{1}{2\vartheta_m}}\prod_{m=1}^k(\vartheta_m^\frac{1}{2\vartheta_m}) \|u\|_{2^*_{s_1}\vartheta_0}.
	\end{equation}
	By ratio test we can see that $\ds\sum_{m\in\mb N}\frac{1}{2\vartheta_m}<\infty$. Let $z_k=\prod_{m=1}^k\vartheta_m^\frac{1}{2\vartheta_m}$. Again by using ratio test, we get $\ln z_k=\sum_{m=1}^{k}\frac{\ln \vartheta_m}{2\vartheta_m}$ is convergent. Hence there exists a positive constant $A$ such that  $C^{\sum_{m=1}^{k}\frac{1}{2\vartheta_m}}\prod_{m=1}^k(\vartheta_m^\frac{1}{2\vartheta_m})\le A$ for all $k\ge 0$. Therefore, \eqref{eqb48} implies
	\begin{equation}\label{eqb49}
	\|u\|_{2^*_{s_1}\vartheta_k}\le A \|u\|_{2^*_{s_1}\vartheta_0}<\infty.
	\end{equation}
	We claim that $u\in L^\infty(\Om)$. Suppose not, then there exists $\varepsilon>0$ and $M\subset\Om$ with $|M|>0$ such that
	\begin{align*}
	u(x)\ge A\|u\|_{2^*_{s_1}\vartheta_0}+\varepsilon \quad \mbox{a.e. }x\in M.
	\end{align*}
	Thus,
	\begin{align*}
	\|u\|_{2^*_{s_1}\vartheta_k}\ge\left(\int_M |u(x)|^{2^*_{s_1}\vartheta_k}\right)^\frac{1}{2^*_{s_1}\vartheta_k}\ge (A\|u\|_{2^*_{s_1}\vartheta_0}+\varepsilon) |M|^\frac{1}{2^*_{s_1}\vartheta_k},
	\end{align*}
	this implies that
	\begin{align*}
	\liminf_{k\ra\infty}\|u\|_{2^*_{s_1}\vartheta_k}\ge (A\|u\|_{2^*_{s_1}\vartheta_0}+\varepsilon),
	\end{align*}
	which is a contradiction to \eqref{eqb49}. Hence, $u\in L^\infty(\Om)$.\QED

\section{Nehari manifold and fibering map analysis }
\noi Due to the fact that $1< r$, we see that $\mc I_\la(tu)\ra -\infty$, as $t\ra\infty$ for $u(\not\equiv 0)\in X_1$. Hence, the functional $\mc I_\la$ is not bounded below on $X_1$. Therefore, it is necessary to restrict $\mc I_\la$ to a proper subset of $X_1$ on which it is bounded below. For this reason, we consider
the Nehari set $\mc{M}_\la$ associated to $(P_\la)$, which is defined as
\begin{equation*}
  \mathcal{M}_\la=\{{u\in X_1\setminus\{0\}}: \langle \mathcal{I}_\lambda^\prime(u), u\rangle =0\},
\end{equation*}
where $\langle\; ,\; \rangle$ is the duality between $X_1$ and its dual space. Obviously,  $\mc M_{\la}$ contains all the solution of $(P_\la)$. To study the critical points of the functional $\mc I_\la$, we define the fibering maps associated to it.
 For $u \in X_1$,  define $\varphi_u: \mb R^+\ra \mb R$
  as $\varphi_{u}(t)=\mathcal{I}_{\la}(tu)$, that is
  \begin{align}\nonumber
  \varphi_{u}(t) &= \frac{t^2}{2} \|u\|_{X_1}^2+\ba \frac{t^p}{p} \|u\|_{X_2}^p- \frac{\la
  	t^{q}}{q}\int_{\Om} a(x)|u|^{q} dx - \frac{t^{2r}}{2r}\int_{\Om} \int_{\Om} \frac{ |u(x)|^{r}|u(y)|^r}{|x-y|^{\mu}} ~dx dy ,\\\label{eqb13}
  \varphi_{u}^{\prime}(t) &= t \|u\|_{X_1}^2+\ba t^{p-1} \|u\|_{X_2}^p- {\la
  	t^{q-1}}\int_{\Om} a(x) |u|^{q} dx  - t^{2r-1}\int_{\Om} \int_{\Om} \frac{ |u(x)|^{r}|u(y)|^r}{|x-y|^{\mu}} ~dx dy,\\\label{eqb14}
  \varphi_{u}^{\prime\prime}(t) &= \|u\|_{X_1}^2+\ba (p-1)t^{p-2} \|u\|_{X_2}^p- (q-1) \la
  t^{q-2} \int_{\Om} a(x) |u|^{q} dx \nonumber\\
  &\qquad - (2r-1) t^{2r-2}\int_{\Om} \int_{\Om} \frac{ |u(x)|^{r}|u(y)|^r}{|x-y|^{\mu}}~dx dy.
  \end{align}
  It is clear that $tu\in \mathcal{M}_{\la}$ if and only if
  $\varphi_{u}^{\prime}(t)=0$ and in particular, $u\in \mathcal{M}_{\la}$ if
  and only if $\varphi_{u}^{\prime}(1)=0$. Hence, it is natural to split
  $\mathcal{M}_{\la}$ into three parts corresponding to local minima,
  local maxima and points of inflection, namely
  \begin{align*}
  \mathcal{M}_{\la}^{0}:= \left\{u\in \mathcal{M}_{\la}:
  \varphi_{u}^{\prime\prime}(1) = 0\right\}, \text{ and }\mathcal{M}_{\la}^{\pm}&:= \left\{u\in \mathcal{M}_{\la}:
  \varphi_{u}^{\prime\prime}(1)
  \gtrless 0\right\}.
  \end{align*}

\noindent Define $\sigma_{\lambda} := \inf\{\; \mathcal{I}_{\lambda}(u) \;| \;u \in \mathcal{M}_{\lambda}\}$ and $\sigma_{\lambda}^\pm := \inf\{\; \mathcal{I}_{\lambda}(u) \;| \;u \in \mathcal{M}_{\lambda}^\pm\}$.
\begin{Lemma}\label{le44}
$\mathcal{I}_{\lambda}$ is coercive and bounded below on  $\mathcal{M}_{\lambda}$.
\end{Lemma}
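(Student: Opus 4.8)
The plan is to exploit the Nehari constraint to rewrite $\mc I_\la$ on $\mc M_\la$ as a sum of manifestly coercive terms minus a genuinely lower-order term governed by the sublinear exponent $q<2$, and then to dominate that term by the quadratic one. First I would recall that every $u\in\mc M_\la$ satisfies $\vph_u'(1)=0$, that is,
\[
\|u\|_{X_1}^2+\ba\|u\|_{X_2}^p=\la\int_\Om a(x)|u|^q\,dx+\int_\Om\int_\Om\frac{|u(x)|^r|u(y)|^r}{|x-y|^\mu}\,dx\,dy .
\]
Solving this for the Choquard double integral and substituting into the definition of $\mc I_\la(u)$ eliminates the top-order nonlocal term and yields the identity
\[
\mc I_\la(u)=\Big(\tfrac12-\tfrac{1}{2r}\Big)\|u\|_{X_1}^2+\ba\Big(\tfrac1p-\tfrac{1}{2r}\Big)\|u\|_{X_2}^p-\la\Big(\tfrac1q-\tfrac{1}{2r}\Big)\int_\Om a(x)|u|^q\,dx .
\]

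Next I would verify that all three coefficients are strictly positive. Since $r>1$ we have $2r>2$, and the hypotheses give $q<p<2<2r$, so each of $\tfrac12-\tfrac{1}{2r}$, $\tfrac1p-\tfrac{1}{2r}$ and $\tfrac1q-\tfrac{1}{2r}$ is positive. Thus the first two terms are nonnegative, and the only term capable of driving $\mc I_\la$ downward is the one carrying the sign-changing weight $a$. To control it I would apply H\"older's inequality with the conjugate exponents $\tfrac{d}{d-q}$ and $\tfrac{d}{q}$, obtaining
\[
\left|\int_\Om a(x)|u|^q\,dx\right|\le\|a\|_{L^{\frac{d}{d-q}}(\Om)}\,\|u\|_{L^d(\Om)}^q .
\]
Because $q<d<2^*_{s_1}$, the continuous embedding $X_1\hookrightarrow L^d(\Om)$ gives $\|u\|_{L^d(\Om)}\le C\|u\|_{X_1}$, and hence $\big|\int_\Om a(x)|u|^q\,dx\big|\le C\|u\|_{X_1}^q$ for a constant $C>0$ depending only on $a$, $d$, $q$ and the embedding constant.

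Combining these estimates and discarding the nonnegative term $\ba(\tfrac1p-\tfrac{1}{2r})\|u\|_{X_2}^p$, I obtain a bound of the form
\[
\mc I_\la(u)\ge C_1\|u\|_{X_1}^2-\la C_2\|u\|_{X_1}^q ,\qquad C_1,C_2>0 .
\]
Since $q<2$, the scalar function $t\mapsto C_1t^2-\la C_2t^q$ on $[0,\infty)$ tends to $+\infty$ as $t\to\infty$ and attains a finite minimum; taking $t=\|u\|_{X_1}$ delivers simultaneously the coercivity of $\mc I_\la$ on $\mc M_\la$ and a lower bound $\mc I_\la(u)\ge -M$ for some $M=M(\la,q,C_1,C_2)>0$.

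There is no deep obstacle in this argument: once the Nehari identity is used the estimate is essentially algebraic. The only points demanding care are the positivity of the three coefficients, which rests precisely on the ordering $q<p<2<2r$, and the control of the indefinite weight term, for which the integrability assumption $a\in L^{\frac{d}{d-q}}(\Om)$ with $d<2^*_{s_1}$ is exactly what makes the H\"older--Sobolev estimate close while keeping the resulting power $q$ strictly below $2$.
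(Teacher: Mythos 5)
Your proof is correct and is essentially the paper's own argument: the paper's proof of this lemma is just the one-line remark that it ``follows using H\"older inequality and Sobolev embedding results,'' and your computation --- using the Nehari constraint to reduce $\mathcal{I}_{\lambda}$ on $\mathcal{M}_{\lambda}$ to $\bigl(\tfrac12-\tfrac{1}{2r}\bigr)\|u\|_{X_1}^2+\beta\bigl(\tfrac1p-\tfrac{1}{2r}\bigr)\|u\|_{X_2}^p-\lambda\bigl(\tfrac1q-\tfrac{1}{2r}\bigr)\int_{\Omega}a(x)|u|^q\,dx$ and then controlling the weight term via H\"older with exponents $\tfrac{d}{d-q},\tfrac dq$ and the embedding $X_1\hookrightarrow L^d(\Omega)$ --- is exactly the standard route those hints point to (indeed the same identity reappears in the paper's proof of Proposition \ref{propb2}). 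All your coefficient-sign checks ($q<p<2<2r$ since $r>1$) are the correct ones, so nothing is missing.
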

\begin{proof}
Proof follows using H\"older inequality and Sobolev embedding results.
\QED
\end{proof}
\begin{Lemma}\label{b5}
There exists $\lambda_{0} > 0$ such that for all $\la \in (0, \la_{0})$, we have $\mathcal{M}_{\la}^{0} = \emptyset.$
\end{Lemma}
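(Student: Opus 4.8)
The plan is to argue by contradiction, exploiting the two equations defining $\mathcal M_\la^0$: if $u\in\mathcal M_\la^0$ then $\varphi_u'(1)=0$ and $\varphi_u''(1)=0$. Writing
$A=\|u\|_{X_1}^2$, $B=\ba\|u\|_{X_2}^p$, $C=\la\int_\Om a(x)|u|^q\,dx$ and $D=\int_\Om\int_\Om \frac{|u(x)|^r|u(y)|^r}{|x-y|^\mu}\,dxdy$, equations \eqref{eqb13} and \eqref{eqb14} evaluated at $t=1$ read $A+B-C-D=0$ and $A+(p-1)B-(q-1)C-(2r-1)D=0$. I would treat this as a small linear system and eliminate one nonlinearity at a time. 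Substituting $D=A+B-C$ into the second equation yields $(2r-2)A+(2r-p)B=(2r-q)C$, while substituting $C=A+B-D$ yields $(2-q)A+(p-q)B=(2r-q)D$. Since $1<q<p<2<2r$, every coefficient $2r-2,\,2r-p,\,2r-q,\,2-q,\,p-q$ is strictly positive, and $A>0$ because $u\neq0$; in particular the second identity forces $D>0$.

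Next I would extract from the second identity a lower bound on $\|u\|_{X_1}$ that is independent of $\la$. Dropping the nonnegative term $(p-q)B$ gives $(2-q)\|u\|_{X_1}^2\le(2r-q)D$. By the Hardy--Littlewood--Sobolev inequality (Theorem \ref{HLS}) together with the continuous embedding $X_1\hookrightarrow L^{\hat r}(\Om)$ (valid since $1<\hat r\le 2^*_{s_1}$), one has $D\le C\|u\|_{\hat r}^{2r}\le C'\|u\|_{X_1}^{2r}$, whence $(2-q)\|u\|_{X_1}^2\le(2r-q)C'\|u\|_{X_1}^{2r}$. As $\|u\|_{X_1}>0$, this rearranges to $\|u\|_{X_1}\ge C_1$ for some constant $C_1>0$ depending only on $n,\mu,r,q,s_1,\Om$.

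Finally, from the first identity I would drop the nonnegative term $(2r-p)B$ to obtain $(2r-2)\|u\|_{X_1}^2\le(2r-q)C=(2r-q)\la\int_\Om a(x)|u|^q\,dx$. Estimating $\int_\Om a(x)|u|^q\,dx\le\|a\|_{L^{d/(d-q)}}\|u\|_{L^d}^q\le C''\|u\|_{X_1}^q$ via H\"older's inequality and the embedding $X_1\hookrightarrow L^d(\Om)$ (valid since $q<d<2^*_{s_1}$), I get $\|u\|_{X_1}^{2-q}\le C_2\la$, that is $\|u\|_{X_1}\le(C_2\la)^{1/(2-q)}$. Comparing with the lower bound $C_1$, the two are incompatible once $(C_2\la)^{1/(2-q)}<C_1$; hence choosing $\la_0:=C_1^{\,2-q}/C_2$ guarantees that no $u\in\mathcal M_\la^0$ can exist for $\la\in(0,\la_0)$, i.e. $\mathcal M_\la^0=\emptyset$.

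The only delicate point is ensuring that every constant --- $C_1$, $C_2$, and those hidden in the HLS and Sobolev estimates --- is genuinely independent of $u$ and of $\la$; this is exactly why the uniform embedding constants of $X_1$ and the fixed quantity $\|a\|_{L^{d/(d-q)}}$ are used throughout. The positivity of all the coefficients, a direct consequence of $1<q<p<2$ and $r>1$, is what makes the lower and upper bounds point in opposite directions and produces the contradiction for small $\la$.
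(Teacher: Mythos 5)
Your proof is correct and follows essentially the same route as the paper: you derive exactly the paper's identities \eqref{eqb15} and \eqref{eqb22}, obtain the $\la$-independent lower bound on $\|u\|_{X_1}$ from the Choquard identity via Hardy--Littlewood--Sobolev, the upper bound $\|u\|_{X_1}^{2-q}\le C_2\la$ from the other identity via H\"older, and your threshold $C_1^{2-q}/C_2$ coincides with the paper's $\la_0$ in \eqref{eqb10}. The only (minor) difference is that your H\"older estimate of $\la\int_{\Om}a(x)|u|^q\,dx$ holds regardless of the sign of that integral, so your single argument quietly subsumes the paper's separate Case 1 (where $\int_{\Om}a(x)|u|^q\,dx=0$).
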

\begin{proof}
We distinguish the following cases:\\
 \textbf{Case 1:}  $u \in \mathcal{M}_{\lambda}$ such that $ \ds \int_{\Om}a(x) |u|^{q}~dx = 0.$\\
  From \eqref{eqb13}, we have
  \begin{align*}
 \|u\|_{X_1}^2 +\ba\|u\|_{X_2}^p- \int_{\Om} \int_{\Om} \frac{ |u(x)|^{r}|u(y)|^r}{|x-y|^{\mu}} ~dx dy =0.
  \end{align*}
  Therefore,
  \begin{align*}
   \phi_u^{\prime\prime}(1)= \|u\|_{X_1}^{2}+ \ba (p-1)\|u\|_{X_2}^{p}-&(2r-1) \int_{\Om} \int_{\Om} \frac{ |u(x)|^{r}|u(y)|^r}{|x-y|^{\mu}} ~dx dy \\
  &=(2-2r)\|u\|_{X_1}^{2}+ \ba (p-2r)\|u\|_{X_2}^{p}<0,
  \end{align*}
 \noi which implies $u \notin \mathcal{M}_{\lambda}^{0}$.\\
   \textbf{Case 2:} $u \in \mathcal{M}_{\lambda}$ such that  $ \ds\int_{\Om}a(x) |u|^{q}~dx \neq 0.$\\
If $u \in \mc{M}_{\la}^{0}$, then from \eqref{eqb13} and \eqref{eqb14}, we have
    \begin{align}
      &(2-q)\|u\|_{X_1}^2 +\ba(p-q) \|u\|_{X_2}^p
        = (2r-q)\int_{\Om} \int_{\Om} \frac{ |u(x)|^{r}|u(y)|^r}{|x-y|^{\mu}} ~dx dy \; \; \text{ and }\label{eqb15} \\
    & (2r-2) \|u\|_{X_1}^2+\ba (2r-p)\|u\|_{X_2}^p=  \la(2r-q) \int_\Omega a(x)|u|^{q} dx.\label{eqb22}
    \end{align}
    Define $E_{\la}:\mc{M}_{\la} \rightarrow \mb{R}$ as
  \begin{equation*}
    E_{\lambda}(u) = \frac{(2r-2) \|u\|_{X_1}^2+\ba (2r-p)\|u\|_{X_2}^p}{(2r-q)} - \lambda\int_\Omega a(x)|u|^{q}~dx,
  \end{equation*}
 then from \eqref{eqb22}, $E_{\lambda}(u) = 0$ for all  $u\; \in \mathcal{M}_{\lambda}^{0}.$  Additionally, using H\"older inequality, we have
  \begin{equation}\label{2.13}
  \begin{aligned}
  E_{\la}(u)
  & \geq \|u\|_{X_1}^{q}\left[\left(\frac{2r-2}{2r-q}\right)\|u\|_{X_1}^{(2-q)} - \la {\|a\|_{\frac{d}{d-q}}}S_d^{\frac{-q}{2}}\right],
  \end{aligned}
  \end{equation}
    Now from \eqref{eqb15} and Theorem \ref{HLS}, we get
  \begin{equation*}
  \|u\|_{X_1} \geq \left(\frac{(2-q)S^r_{\hat r}} {(2r-q)C(n,\mu)}\right)^{\frac{1}{2r-2}}.
  \end{equation*}
   Using this in \eqref{2.13}, we obtain
\begin{equation*}
E_{\la}(u) \geq \|u\|_{X_1}^{q}\left(\left(\frac{2r-2}{2r-q}\right)\left(\frac{(2-q)S^r_{\hat r}}{(2r-q)C(n,\mu)}\right)^{\frac{2-q}{2r-2}} - \la {\|a\|_{\frac{d}{d-q}}}S_d^{\frac{-q}{2}}\right).  \end{equation*}
  Set
\begin{equation}\label{eqb10}
  \lambda_0:=\;\left(\frac{(2r-2)S_d^{\frac{q}{2}}}{(2r-q)\|a\|_{\frac{d}{d-q}}}\right)\left(\frac{(2-q)S^r_{\hat r}}{(2r-q)C(n,\mu)}\right)^{\frac{2-q}{2r-2}}>0,
\end{equation}
then from \eqref{2.13},  for $\lambda\in (0, \lambda_0)$ we get $E_{\lambda}(u)>0,$ for all  $u \in \mathcal{M}_{\lambda}^{0},$
  which is a contradiction. Therefore, $\mathcal{M}_{\lambda}^{0}= \emptyset$ for all $\la\in(0,\la_0)$.
\QED
\end{proof}

\noi Now, define $\psi_u: \mb R^{+} \lra \mb R$ by
\begin{equation*}
 \psi_u(t)= t^{2-q}\|u\|_{X_1}^{2}+\ba t^{p-q}\|u\|_{X_2}^{p} - t^{2r-q}\int_{\Om} \int_{\Om} \frac{ |u(x)|^{r}|u(y)|^r}{|x-y|^{\mu}} ~dx dy,
 \end{equation*}
 then
 \begin{equation*}
 \psi_u^{\prime}(t)=(2-q) t^{1-q} \|u\|_{X_1}^{2}+\ba(p-q) t^{p-q-1} \|u\|_{X_2}^{p} -
(2r-q)t^{2r-q-1}\int_{\Om} \int_{\Om} \frac{ |u(x)|^{r}|u(y)|^r}{|x-y|^{\mu}}~dx dy.
\end{equation*}
\noi Then trivially,  $tu\in \mathcal{M}_{\la}$ if and only if $t$ is a
solution of $\psi_u(t)={\la} \int_{\Om} a(x) |u|^{q} dx$ and if $tu\in\mc M_\la$, then $\varphi_{tu}^{\prime \prime}(1)=t^{q-1}\psi_u^{\prime}(t)$.
 Moreover, we see  $\psi_u(t)\ra -\infty$ as $t \ra \infty$, $\psi_u(t)>0$ for $t$ small enough and $\psi^{\prime}_u(t)<0$ for $t$ large enough.
 Now based on the sign of $\ds \int_{\Om} a(x)|u|^{q} dx$, we will study the fibering map $\varphi_{u}$.
 \begin{Lemma}\label{lemm5}
 	Let $u(\not\equiv 0)\in X_1$ and $\la\in(0,\la_0)$.
 	\begin{enumerate}
 		\item[(i)] If $ \int_{\Om} a(x)|u|^{q} dx>0$, then there exist unique $t_1<t_{\max}<t_2$ such that $t_1u\in\mc M_\la^+$ and $t_2u\in\mc M_\la^-$. Moreover, $\mc{I}_\la(t_1u) = \ds\min_{0 \leq t \leq t_2}\mc{I}_\la (tu)$ and $\mc{I}_\la(t_2u)
 		= \ds\max_{t \geq t_{\max}} \mc{I}_\la(tu)$.
 		\item[(ii)] If $ \int_{\Om} a(x)|u|^{q} dx<0$, then there exists unique $t_2>0$ such that $t_2u\in\mc M_\la^-$.
 	\end{enumerate}
 \end{Lemma}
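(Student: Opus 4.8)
The plan is to analyze the fibering map through the auxiliary function $\psi_u$, exploiting the monotonicity structure already laid out before the statement. The key observation is that $tu \in \mathcal{M}_\la$ precisely when $t$ solves $\psi_u(t) = \la\int_\Om a(x)|u|^q\,dx$, and that the sign of $\varphi_{tu}''(1) = t^{q-1}\psi_u'(t)$ is determined by the sign of $\psi_u'(t)$. So the whole problem reduces to counting solutions of a one-dimensional equation $\psi_u(t) = c$ (where $c := \la\int_\Om a(x)|u|^q\,dx$) and reading off from the sign of $\psi_u'$ at each root whether the corresponding point lies in $\mathcal{M}_\la^+$ or $\mathcal{M}_\la^-$.

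First I would establish the shape of the graph of $\psi_u$ on $(0,\infty)$. Since $1 < q < p < 2 < 2r$, the exponents satisfy $2-q > p-q > 0$ while $2r - q > 0$, so $\psi_u(t) \to 0^+$ as $t \to 0^+$ (the two positive-coefficient terms dominate and vanish), $\psi_u(t) \to -\infty$ as $t \to \infty$ (the $-t^{2r-q}$ term wins), and $\psi_u(t) > 0$ for small $t$. Next I would argue that $\psi_u$ has exactly one critical point $t_{\max}$ on $(0,\infty)$, a strict maximum: writing $\psi_u'(t) = 0$ and dividing out the lowest power of $t$, one gets an equation of the form (increasing positive terms) $=$ (the concave-beating critical term); more cleanly, one checks that $\psi_u'(t) > 0$ for small $t$ and $\psi_u'(t) < 0$ for large $t$, and that once $\psi_u'$ becomes negative it stays negative — this last uniqueness point is where care is needed because of the two distinct exponents $2-q$ and $p-q$ in the positive part. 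I would handle it by factoring and showing $\psi_u'$ changes sign only once, so $\psi_u$ is strictly increasing on $(0, t_{\max})$ and strictly decreasing on $(t_{\max}, \infty)$, with $\psi_u(t_{\max}) > 0$.

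For part (i), when $\int_\Om a(x)|u|^q\,dx > 0$ we have $c > 0$. I would show that $c < \psi_u(t_{\max})$ for all $\la \in (0,\la_0)$ — this is exactly the content of Lemma \ref{b5} (that $\mathcal{M}_\la^0 = \emptyset$ for such $\la$, i.e. the horizontal line $y = c$ never becomes tangent to the graph at its peak). Given $0 < c < \psi_u(t_{\max})$, the strict monotonicity on each side yields exactly two roots $t_1 < t_{\max} < t_2$. At $t_1$, on the increasing branch, $\psi_u'(t_1) > 0$, hence $\varphi_{t_1 u}''(1) = t_1^{q-1}\psi_u'(t_1) > 0$, so $t_1 u \in \mathcal{M}_\la^+$; at $t_2$, on the decreasing branch, $\psi_u'(t_2) < 0$, so $t_2 u \in \mathcal{M}_\la^-$. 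The minimization/maximization statements then follow by relating $\varphi_u'(t)$ to $\psi_u(t) - c$: since $\varphi_u'(t) = t^{q-1}(\psi_u(t) - c)$, the sign of $\varphi_u'$ matches that of $\psi_u - c$, giving $\varphi_u$ decreasing on $(0,t_1)$, increasing on $(t_1,t_2)$, decreasing after $t_2$, which yields $\mathcal{I}_\la(t_1 u) = \min_{0\le t\le t_2}\mathcal{I}_\la(tu)$ and $\mathcal{I}_\la(t_2 u) = \max_{t \ge t_{\max}}\mathcal{I}_\la(tu)$.

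For part (ii), when $\int_\Om a(x)|u|^q\,dx < 0$ we have $c < 0$. Since $\psi_u > 0$ on $(0, t_0)$ for some $t_0$ and $\psi_u$ decreases monotonically to $-\infty$ after its peak, the equation $\psi_u(t) = c$ has exactly one solution $t_2$, which necessarily lies on the strictly decreasing branch beyond the point where $\psi_u$ crosses zero; there $\psi_u'(t_2) < 0$, so $\varphi_{t_2 u}''(1) < 0$ and $t_2 u \in \mathcal{M}_\la^-$. The main obstacle throughout is the single-maximum (uniqueness of the critical point) claim for $\psi_u$ in the presence of the mixed exponents from the $(p,2)$ structure; once that is secured, everything else is a direct sign count. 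I would address it by an elementary analysis of $\psi_u'(t)/t^{1-q}$, showing it is strictly decreasing where it matters, rather than by any delicate estimate.
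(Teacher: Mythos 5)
Your overall route is the same as the paper's: reduce membership in $\mc M_\la$ to the scalar equation $\psi_u(t)=c$ with $c=\la\int_\Om a(x)|u|^q\,dx$, show $\psi_u$ is strictly unimodal with a positive maximum, and read off $\mc M_\la^\pm$ from the sign of $\psi_u'$ at each root via $\varphi_{tu}''(1)=t^{q-1}\psi_u'(t)$. Your treatment of the unimodality is in fact cleaner than the paper's: you observe that $t^{q-1}\psi_u'(t)=(2-q)\|u\|_{X_1}^2+\ba(p-q)t^{p-2}\|u\|_{X_2}^p-(2r-q)t^{2r-2}\ds\int_\Om\int_\Om\frac{|u(x)|^r|u(y)|^r}{|x-y|^\mu}\,dx\,dy$ is strictly decreasing on $(0,\infty)$ (since $p<2<2r$), hence has a unique zero $t_{\max}$ with a single sign change; the paper instead rearranges $\psi_u'(t)=0$ into $F_u(t)=\ba(p-q)\|u\|_{X_2}^p$ with $F_u(t)=(2r-q)t^{2r-p}\int\!\!\int\frac{|u|^r|u|^r}{|x-y|^\mu}-(2-q)t^{2-p}\|u\|_{X_1}^2$ and analyzes the shape of $F_u$. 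Both work; yours is shorter. The sign bookkeeping for the min/max assertions and for part (ii) is correct and matches the paper (you even fix the paper's typo $\varphi_u'(t)=t^q(\cdots)$, which should be $t^{q-1}(\cdots)$).

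There is, however, one genuine gap, and it sits at the crux of part (i): you claim that $0<c<\psi_u(t_{\max})$ for $\la\in(0,\la_0)$ ``is exactly the content of Lemma \ref{b5}.'' It is not. Lemma \ref{b5} ($\mc M_\la^0=\emptyset$) only rules out the tangent case $c=\psi_u(t_{\max})$; it does not exclude $c>\psi_u(t_{\max})$, in which case the line $y=c$ misses the graph entirely and the roots $t_1,t_2$ you need simply do not exist. The paper closes this by a quantitative estimate rather than by citing Lemma \ref{b5}: it exhibits the explicit point $T_0=\|u\|_{X_1}^{-1}\bigl(\frac{(2-q)S^r_{\hat r}}{(2r-q)C(n,\mu)}\bigr)^{1/(2r-2)}\le t_{\max}$ and computes $\psi_u(t_{\max})\ge\psi_u(T_0)\ge\|u\|_{X_1}^q\bigl(\frac{2r-2}{2r-q}\bigr)\bigl(\frac{(2-q)S^r_{\hat r}}{(2r-q)C(n,\mu)}\bigr)^{\frac{2-q}{2r-2}}$, while H\"older gives $c\le\la\|a\|_{\frac{d}{d-q}}S_d^{-q/2}\|u\|_{X_1}^q$; the strict inequality $c<\psi_u(t_{\max})$ then follows precisely from the definition \eqref{eqb10} of $\la_0$. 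Your argument can be repaired without this computation by a one-line continuity trick: $\psi_u(t_{\max})>0$ is independent of $\la$ and $c(\la)$ is linear and increasing in $\la$, so if $c(\la^*)\ge\psi_u(t_{\max})$ for some $\la^*<\la_0$, then $c(\la')=\psi_u(t_{\max})$ for some $\la'\in(0,\la^*]$, putting $t_{\max}u\in\mc M_{\la'}^0$ and contradicting Lemma \ref{b5} at $\la'$. But as written, the inference from Lemma \ref{b5} to the two-root picture does not go through, and you should either add this intermediate-value argument or reproduce the paper's explicit bound.
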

\begin{proof}
$(i)$ Let $u\in X_1$ such that $ \int_{\Om} a(x)|u|^{q} dx>0$. We claim that there exists unique $t_{max}>0$ such that $\psi_u^\prime(t_{\max})=0$.  To prove this, it is sufficient to show the existence of unique $t_{\max}$ such that $F_u(t_{\max})= \ba (p-q)\|u\|_{X_2}^p$, where $F_u(t):= (2r-q)t^{2r-p}\ds\int_{\Om} \int_{\Om} \frac{ |u(x)|^{r}|u(y)|^r}{|x-y|^{\mu}}~dx dy-(2-q)t^{2-p}\|u\|_{X_1}^{2}$.
By the fact that $p<2<r$, we see  that $F_u(t)<0$ for $t$ small enough, $F_u(t)\ra\infty$ as $t\ra\infty$. Hence, there exists unique $\hat t>0$ such that $F_u(\hat t)=0$. Moreover, there exists unique $\tilde{t}>0$ such that $F_u^\prime(\tilde{t})=0$.
 Therefore, there exists unique $t_{max}>\hat t>0$ such that $F_u(t_{max})= \ba(p-q)\|u\|_{X_p}^p$.
Using these, we conclude that $\psi_u$ is increasing in $(0,t_{max})$, decreasing in $(t_{max},\infty)$. As a consequence,
\begin{align*}
(2-q)t^{2}_{max}\|u\|_{X_1}^{2}&\leq (2-q)t^{2}_{max}\|u\|_{X_1}^{2}+ \ba (p-q)t^{p}_{max}\|u\|_{X_2}^{p} \\&= t^{2r}_{max}(2r-q)\ds\int_\Om \int_{\Om} \frac{ |u(x)|^{r}|u(y)|^r}{|x-y|^{\mu}}~dx dy     \leq (2r-q) t^{2r}_{max}C(n,\mu)S^{-r}_{\hat{r}}\|u\|_{X_1}^{2r}.
\end{align*}
Define
 \begin{equation*}
T_0 :=\frac{1}{\|u\|_{X_1}}\left(\frac{(2-q)S^r_{\hat r}}{(2r-q)C(n,\mu)}\right) ^{1/(2r-2)}\leq t_{max}
\end{equation*}
then,
\begin{align*}
\psi_u(t_{max})\geq  \psi_u(T_0)&\geq T_0^{2-q}\|u\|^{2}_{X_{1}}-T_0^{2r-q}C(n,\mu)S_{\hat r}^{-r}\|u\|^{2r}_{X_1}\\
&=\|u\|^{q}_{X_1}\left(\frac{2r-2}{2r-q}\right)\left(\frac{(2-q)S^r_{\hat r}}{(2r-q)C(n,\mu)}\right)^{\frac{2-q}{2r-2}}\geq 0.
\end{align*}
Since $\la < \la_0$, then there exist $t_1<t_{max}$ and $t_2>t_{max}$ such that $\psi_u(t_1)=\psi_u(t_2)=\la \ds \int_{\Om} a(x) |u|^{q} dx $. That is,  $t_1u, t_2u \in \mathcal{M}_{\la}$. Also $\psi_u^{\prime}(t_1)>0$ , $\psi_u^{\prime}(t_2)>0$ implies $t_1u \in \mathcal{M}^{+}_{\lambda}$ and $t_2u \in \mathcal{M}^{-}_{\la}$.  Since $\varphi^{\prime}_{u}(t) = t^{q}(\psi_u(t)- \lambda \int_{\Omega} a(x)|u|^{q}~dx)$,  $\varphi^{\prime}_{u}(t)<0$ for all $t \in [0, t_1)$ and $\varphi^{\prime}_{u}(t)>0$ for all $t \in (t_1, t_2)$. So, $\mathcal{I}_\lambda(t_1u) = \displaystyle\min_{0 \leq t \leq t_2}\mathcal{I}_\lambda (tu).$ Also, $\varphi^{\prime}_u(t) > 0$ for all $t \in [t_1, t_2),\;
\varphi^{\prime}_u(t_2) = 0$ and $\varphi^{\prime}_u(t) < 0$ for all $t \in (t_2, \infty)$ implies $\mathcal{I}_\lambda(t_2u)
= \displaystyle\max_{t \geq t_{\max}} \mathcal{I}_\lambda(tu).$\\
$(ii)$ Let $u\in X_1$ be such that $ \int_{\Om} a(x)|u|^{q} dx<0$.
From $(i)$, we have $\psi_u$ is increasing in $(0,t_{max})$, decreasing in $(t_{max},\infty)$ and $\psi_u^\prime(t_{max})=0$.  Since $\la \int_{\Om} a(x)|u|^{q} dx<0$ and $\psi_u(t_{max})>0$, there exists unique $t_1>0$ such that $\psi_u(t_1)=\la  \int_{\Om} a(x) |u|^{q} dx $ and $\psi_u^{\prime}(t_1)<0$, which implies $t_1u\in \mc{M}_{\la}^{-}$, that is $t_1u$ is a local maximum.\QED
\end{proof}

\begin{Lemma}\label{L35}
	Let $\la_0$ be defined as in \eqref{eqb10}, then the following holds.
\begin{enumerate}
	\item[(i)] There exists a constant $C_{1}>0$ such that
	$\sigma_{\la} \leq \sigma_{\la}^+ \leq - \frac{(2-q)(r-1)}{2qr} \; C_1<0$.	
	\item[(ii)] $\inf\{\|u\|: u\in\mc M_\la^- \}>0$.
\end{enumerate}
\end{Lemma}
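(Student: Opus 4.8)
The whole argument runs on the two scalar identities available for $u\in\mc M_\la$, namely $\varphi_u'(1)=0$ together with the sign of $\varphi_u''(1)$; I would read everything off the explicit expressions \eqref{eqb13} and \eqref{eqb14}. To lighten notation write $A=\|u\|_{X_1}^2$, $B=\ba\|u\|_{X_2}^p$ and $E=\int_\Om\int_\Om|u(x)|^r|u(y)|^r|x-y|^{-\mu}\,dx\,dy$, so that the Nehari constraint reads $A+B=\la\int_\Om a(x)|u|^q\,dx+E$ and, after inserting this into \eqref{eqb14}, $\varphi_u''(1)=(2-q)A+(p-q)B-(2r-q)E$.

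For (i), the bound $\sigma_\la\le\sigma_\la^+$ is immediate since $\mc M_\la^+\subseteq\mc M_\la$. To make $\sigma_\la^+$ strictly negative I would first subtract $\tfrac1q\varphi_u'(1)=0$ from $\mc I_\la(u)=\varphi_u(1)$, which cancels the $\int_\Om a(x)|u|^q$ term and leaves
\[
\mc I_\la(u)=-\frac{2-q}{2q}A-\frac{p-q}{pq}B+\frac{2r-q}{2qr}E .
\]
Since $u\in\mc M_\la^+$ means $\varphi_u''(1)>0$, the identity for $\varphi_u''(1)$ gives $(2r-q)E<(2-q)A+(p-q)B$; substituting this and collecting coefficients (all signs being fixed by $q<p<2<2r$) yields
\[
\mc I_\la(u)<-\frac{(2-q)(r-1)}{2qr}A-\frac{(p-q)(2r-p)}{2pqr}B\le-\frac{(2-q)(r-1)}{2qr}\|u\|_{X_1}^2<0 .
\]
Because $a^+\not\equiv0$ there is $v$ with $\int_\Om a(x)|v|^q\,dx>0$, and Lemma \ref{lemm5}(i) (applicable since $\la<\la_0$ forces $\mc M_\la^0=\emptyset$ by Lemma \ref{b5}) places $t_1v\in\mc M_\la^+$; fixing one such $u_\ast$ and setting $C_1:=\|u_\ast\|_{X_1}^2>0$ produces $\sigma_\la^+\le\mc I_\la(u_\ast)<-\frac{(2-q)(r-1)}{2qr}C_1<0$.

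For (ii), the same identity for $\varphi_u''(1)$ now runs the other way: $u\in\mc M_\la^-$ gives $(2r-q)E>(2-q)A+(p-q)B\ge(2-q)A$. I would then bound the nonlocal term from above exactly as in the proof of Lemma \ref{lemm5}, using Hardy--Littlewood--Sobolev (Theorem \ref{HLS}) and the embedding $X_1\hookrightarrow L^{\hat r}$ to obtain $E\le C(n,\mu)S_{\hat r}^{-r}\|u\|_{X_1}^{2r}=C(n,\mu)S_{\hat r}^{-r}A^r$. Combining the two inequalities gives $(2-q)A<(2r-q)C(n,\mu)S_{\hat r}^{-r}A^r$, and dividing by $A>0$ leaves the uniform lower bound $\|u\|_{X_1}\ge\big((2-q)S_{\hat r}^r/((2r-q)C(n,\mu))\big)^{1/(2(r-1))}>0$, whence $\inf\{\|u\|:u\in\mc M_\la^-\}>0$.

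The coefficient bookkeeping is routine; the only genuinely delicate point is in (i). One cannot obtain $C_1$ as a uniform lower bound for $\|u\|_{X_1}$ over $\mc M_\la^+$, since those norms are not bounded away from $0$ (scaling a test function with arbitrarily small $\int_\Om a(x)|u|^q\,dx$ drives the first fibering root $t_1\to0$). The strict negativity of $\sigma_\la^+$ must therefore be extracted from the nonemptiness of $\mc M_\la^+$, realizing $C_1$ as the squared norm of a single admissible element supplied by Lemma \ref{lemm5}(i).
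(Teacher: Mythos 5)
Your proof is correct and takes essentially the same route as the paper: for (i) the paper likewise fixes a single $u_0$ with $\int_\Om a(x)|u_0|^q\,dx>0$, places $t_0u_0\in\mc M_\la^+$ via the fibering map, derives the same coefficient identity $\mc I_\la(u)\le -\frac{(2-q)(r-1)}{2qr}\|u\|_{X_1}^2-\frac{(p-q)(2r-p)}{2pqr}\ba\|u\|_{X_2}^p$, and realizes $C_1=\|t_0u_0\|_{X_1}^2$ exactly as you do. Part (ii) is verbatim the paper's chain $(2-q)\|u\|_{X_1}^2<(2r-q)E\le(2r-q)C(n,\mu)S_{\hat r}^{-r}\|u\|_{X_1}^{2r}$, giving the same uniform lower bound $\bigl((2-q)S_{\hat r}^r/((2r-q)C(n,\mu))\bigr)^{1/(2r-2)}$.
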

\begin{proof}
To prove $(i)$, let $u_{0} \in \mathrm{X}_{1}$ such that $\int_\Omega{a(x)|u_{0}|^q dx}>0$. It implies there exists $ t_0=t_{0}(u_{0}) > 0$ such that  $t_{0}u_{0} \in \mathcal{M}^{+}_{\lambda}$ that is, $\varphi_{t_{0}u_{0}}^{\prime\prime}(1)>0 $. Therefore, we have
\begin{equation*}
\begin{aligned}
 \mathcal{I}_{\lambda}(t_{0}u_{0}) &\leq \left(\frac{2-q}{q}\right) \left(\frac{1}{2r}-\frac{1}{2}\right) \|t_{0} u_{0}\|_{X_1}^{2}+\ba\left(\frac{p-q}{q}\right) \left(\frac{1}{2r}-\frac{1}{p}\right) \|t_{0} u_{0}\|_{X_2}^{p}\\
&\leq \left(\frac{2-q}{q}\right) \left(\frac{1}{2r}-\frac{1}{2}\right) \|t_{0} u_{0}\|_{X_1}^{2} \leq -\frac{(2-q)(r-1)}{2qr} \; C_1,
  \end{aligned}
\end{equation*}
\noi where $C_1=\|t_0 u_0 \|_{X_1}^{2}.$ This implies $ \sigma_{\lambda}^+ \leq -\frac{(2-q)(r-1)}{2qr}\;C_{1}<0$.\\
To prove $(ii)$, let $u \in \mathcal{M}_{\la}^{-} $, then $\varphi^{\prime\prime}_u(1)<0$ which implies that
\begin{align*}
(2-q) \|u\|_{X_1}^2
& \leq (2-q) \|u\|_{X_1}^2+\ba(p-q) \|u\|_{X_2}^p \\
& < (2r-q) \int_{\Om} \int_{\Om} \frac{ |u(x)|^{r}|u(y)|^r}{|x-y|^{\mu}} ~dx dy \\
& \leq (2r-q)C(n,\mu)S^{-r}_{\hat r}\|u\|_{X_1}^{2r},
\end{align*}
that is,
  \begin{equation*}
 \|u\|_{X_1} > \left(\frac{(2-q)S^r_{\hat r}}{(2r-q)C(n,\mu)}\right)^{1/(2r-2)}.
 \end{equation*}
 Hence the result follows.
\QED
\end{proof}
\begin{Lemma}\label{tt}
 Let $\lambda \in (0, \lambda_{0}),$ and ${z \in \mathcal{M}_{\lambda}}$, then there exists $\epsilon > 0$ and a differentiable function
$\xi : \mathcal{B}(0,\epsilon) \subseteq X_{1} \rightarrow \mathbb{R}^{+}$ such that $\xi(0)=1,$ the function $\xi(w)(z-w)\in \mathcal{M}_{\lambda}$
and
\begin{equation*}
\langle\xi^{\prime}(0), w\rangle = \frac{2 \mc{K}_1( z, w)+p\ba\mc K_2(z,w) - q\la \int_\Om a(x)|z|^{q-2}zwdx- 2r \mc A_r(z,w) } {(2-q)\|z\|_{X_1}^{2}+(p-q)\|z\|_{X_2}^p -(2r-q)\ds \int_{\Om} \int_{\Om} \frac{ |z(x)|^{r}|z(y)|^r}{|x-y|^{\mu}} ~dx dy },
\end{equation*}
for all $w\in X_1$.
\end{Lemma}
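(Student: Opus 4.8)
The natural strategy is to apply the implicit function theorem to the scalar constraint that cuts out $\mc M_\la$. To this end I would introduce the map $\Phi:\R^+\times X_1\to\R$ defined by
\[
\Phi(t,w)=\ld \mc I_\la'\big(t(z-w)\big),\,t(z-w)\rd,
\]
which, after writing out the duality pairing, reads
\[
\Phi(t,w)=t^2\|z-w\|_{X_1}^2+\ba t^p\|z-w\|_{X_2}^p-\la t^q\int_\Om a(x)|z-w|^q\,dx-t^{2r}\int_\Om\int_\Om\frac{|(z-w)(x)|^r|(z-w)(y)|^r}{|x-y|^\mu}\,dxdy.
\]
Since $z\in\mc M_\la$, the Nehari identity gives $\Phi(1,0)=\ld\mc I_\la'(z),z\rd=0$, and $\Phi$ is of class $C^1$ on $\R^+\times X_1$ because each term is continuously differentiable in $(t,w)$; the differentiability of the Choquard term is guaranteed by the Hardy--Littlewood--Sobolev inequality (Theorem \ref{HLS}). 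The key observation is that $\Phi(t,w)=0$ with $t>0$ says precisely that $t(z-w)\in\mc M_\la$.

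The decisive step is to verify $\pa_t\Phi(1,0)\neq 0$; this is where the restriction $\la\in(0,\la_0)$ is used and is the only genuinely nontrivial point. Differentiating in $t$ and setting $(t,w)=(1,0)$ gives
\[
\pa_t\Phi(1,0)=2\|z\|_{X_1}^2+\ba p\|z\|_{X_2}^p-\la q\int_\Om a(x)|z|^q\,dx-2r\int_\Om\int_\Om\frac{|z(x)|^r|z(y)|^r}{|x-y|^\mu}\,dxdy.
\]
Eliminating the term $\la\int_\Om a(x)|z|^q\,dx$ by means of the Nehari constraint $\varphi_z'(1)=0$ reduces this to
\[
\pa_t\Phi(1,0)=(2-q)\|z\|_{X_1}^2+\ba(p-q)\|z\|_{X_2}^p-(2r-q)\int_\Om\int_\Om\frac{|z(x)|^r|z(y)|^r}{|x-y|^\mu}\,dxdy=\varphi_z''(1).
\]
By Lemma \ref{b5} we have $\mc M_\la^0=\emptyset$ for $\la\in(0,\la_0)$, hence $z\notin\mc M_\la^0$, i.e. $\varphi_z''(1)\neq 0$, so $\pa_t\Phi(1,0)\neq 0$. (Note that this quantity is exactly the denominator appearing in the statement.)

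With $\Phi(1,0)=0$ and $\pa_t\Phi(1,0)\neq 0$ in hand, the implicit function theorem produces an $\e>0$ and a $C^1$ map $\xi:\mc B(0,\e)\subseteq X_1\to\R$ with $\xi(0)=1$ and $\Phi(\xi(w),w)=0$ for all $w\in\mc B(0,\e)$; shrinking $\e$ if necessary, continuity of $\xi$ keeps $\xi(w)>0$, so $\xi$ takes values in $\R^+$, and since $z\neq 0$ forces $z-w\neq 0$ for small $w$, we get $\xi(w)(z-w)\in\mc M_\la$. To obtain the formula for $\xi'(0)$ I would differentiate $\Phi(\xi(w),w)=0$ at $w=0$, giving $\ld\xi'(0),w\rd=-\big(\pa_t\Phi(1,0)\big)^{-1}\ld\pa_w\Phi(1,0),w\rd$. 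A direct computation of the Fréchet derivative in $w$ at $w=0$, using $\tfrac{d}{dw}\|z-w\|_{X_1}^2=-2\mc K_1(z-w,\cdot)$, $\tfrac{d}{dw}\|z-w\|_{X_2}^p=-p\mc K_2(z-w,\cdot)$, and the analogous expressions for the remaining two terms, yields
\[
\ld\pa_w\Phi(1,0),w\rd=-2\mc K_1(z,w)-\ba p\,\mc K_2(z,w)+q\la\int_\Om a(x)|z|^{q-2}zw\,dx+2r\,\mc A_r(z,w).
\]
Dividing by $-\varphi_z''(1)$ recovers precisely the claimed expression. This last step is routine bookkeeping of derivatives; the conceptual content lies entirely in the identity $\pa_t\Phi(1,0)=\varphi_z''(1)$ together with the emptiness of $\mc M_\la^0$.
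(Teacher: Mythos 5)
Your proposal is correct and follows essentially the same route as the paper: the paper defines the very same map (there called $\mathcal{H}_z(t,w)=\langle \mathcal{I}'_\la(t(z-w)),t(z-w)\rangle$), notes $\mathcal{H}_z(1,0)=0$ and $\frac{\partial}{\partial t}\mathcal{H}_z(1,0)\neq 0$ via Lemma \ref{b5}, and invokes the implicit function theorem, delegating the remaining computations to \cite[Lemma 3.5]{goel1}. Your write-up simply makes explicit the details the paper outsources — in particular the identity $\partial_t\Phi(1,0)=\varphi_z''(1)$ (which also exposes that the denominator in the statement should carry a factor $\ba$ in front of $(p-q)\|z\|_{X_2}^p$) and the Fr\'echet derivative in $w$ yielding the formula for $\xi'(0)$.
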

\begin{proof}
For  ${z\in \mathcal{M}_\lambda}$, define  a function $\mathcal{H}_z:\mathbb{R}\times X_{1} \rightarrow \mathbb{R}$ given by
\begin{equation*}
\begin{aligned}
\mathcal{H}_z(t,w) &:= \langle \mathcal{I}^{\prime}_{\la}(t(z-w)),(t(z-w))\rangle\\& \;=
 t^{2}\|z-w\|_{X_1}^{2}+ \ba t^{p}\|z-w\|_{X_2}^{p}-t^{q}\la\int_\Om{a(x)|z-w|^{q}dx} \\&  \qquad -t^{2r}\ds \int_{\Om} \int_{\Om} \frac{ |(z-w)(x)|^{r}|(z-w)(y)|^r}{|x-y|^{\mu}} ~dx dy.
\end{aligned}
\end{equation*}
\noi Then $\mathcal{H}_z(1,0) = \langle \mathcal{I}^{\prime}_{\la} (z),z\rangle = 0$ and by Lemma \ref{b5}, we have
$\frac{\partial}{ \partial t}\mathcal{H}_z(1,0)\neq 0.$
\noi Therefore, by implicit function theorem result follows (for details see \cite[Lemma 3.5]{goel1}).
 \QED
\end{proof}

\begin{Proposition}\label{propb2}
Let $\lambda \in (0,\lambda_{0})$, then there exists a minimizing sequence $\{u_k\} \subset \mathcal{M}_{\lambda}$ such that
\begin{center}
    $\mathcal{I}_{\lambda}(u_{k}) = \sigma_{\lambda}+o_k(1)$ and $\mathcal{I}_{\lambda}^{\prime}(u_{k}) = o_k(1).$
\end{center}
\end{Proposition}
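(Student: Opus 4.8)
The plan is to manufacture the sequence by Ekeland's variational principle on the constraint $\mathcal{M}_\lambda$ and then upgrade the resulting ``almost minimizer'' property into asymptotic criticality $\mathcal{I}_\lambda'(u_k)=o_k(1)$ using the projection map $\xi$ from Lemma \ref{tt}. Concretely, since Lemma \ref{le44} guarantees that $\mathcal{I}_\lambda$ is coercive and bounded below on $\mathcal{M}_\lambda$, Ekeland's principle yields $\{u_k\}\subset\mathcal{M}_\lambda$ with $\mathcal{I}_\lambda(u_k)\le\sigma_\lambda+\tfrac1k$ and $\mathcal{I}_\lambda(w)\ge\mathcal{I}_\lambda(u_k)-\tfrac1k\|w-u_k\|_{X_1}$ for every $w\in\mathcal{M}_\lambda$. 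The first inequality is exactly $\mathcal{I}_\lambda(u_k)=\sigma_\lambda+o_k(1)$. Coercivity gives a uniform bound $\|u_k\|_{X_1}\le M$; moreover, writing $\mathcal{I}_\lambda(u_k)$ on $\mathcal{M}_\lambda$ in the form that eliminates the Choquard term, one gets $\mathcal{I}_\lambda(u_k)\ge -C(\|u_k\|_{X_1}^2+\|u_k\|_{X_1}^p)$ (using Lemma \ref{L1}), so since $\sigma_\lambda<0$ by Lemma \ref{L35} we also obtain $\|u_k\|_{X_1}\ge\delta>0$ for large $k$.

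For the gradient bound, fix $w\in X_1$ with $\|w\|_{X_1}=1$ and apply Lemma \ref{tt} with $z=u_k$ to get a differentiable $\xi_k$, $\xi_k(0)=1$, such that $v_\rho:=\xi_k(\rho w)(u_k-\rho w)\in\mathcal{M}_\lambda$ for small $\rho>0$. Inserting $w=v_\rho$ into the Ekeland inequality, expanding $\mathcal{I}_\lambda(v_\rho)-\mathcal{I}_\lambda(u_k)=\langle\mathcal{I}_\lambda'(u_k),v_\rho-u_k\rangle+o(\|v_\rho-u_k\|_{X_1})$, using $\langle\mathcal{I}_\lambda'(u_k),u_k\rangle=0$, dividing by $\rho$ and letting $\rho\to0$ gives
\[
|\langle\mathcal{I}_\lambda'(u_k),w\rangle|\le\frac{C}{k}\big(1+\|\xi_k'(0)\|\big).
\]
Thus the whole proof collapses to showing that $\|\xi_k'(0)\|$ is bounded uniformly in $k$.

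This uniform bound is the main obstacle, and it is exactly where $\lambda<\lambda_0$ enters quantitatively. The numerator of $\langle\xi_k'(0),w\rangle$ in Lemma \ref{tt} is controlled by $C\|u_k\|_{X_1}$ through Cauchy--Schwarz, Hölder, Theorem \ref{HLS} and Lemma \ref{L1}, hence stays bounded; its denominator is $\varphi_{u_k}''(1)$ (compare \eqref{eqb14} with the constraint $\varphi_{u_k}'(1)=0$), so it suffices to prove $|\varphi_{u_k}''(1)|\ge c_0>0$. I would argue by contradiction: if $\varphi_{u_k}''(1)\to0$ along a subsequence, then combining $\varphi_{u_k}'(1)=0$ with $\varphi_{u_k}''(1)=o(1)$ yields, exactly as in Lemma \ref{b5}, that $\lambda\int_{\Omega}a|u_k|^q=\big[(2r-2)\|u_k\|_{X_1}^2+\ba(2r-p)\|u_k\|_{X_2}^p\big]/(2r-q)+o(1)>0$ together with the lower bound $\|u_k\|_{X_1}\ge\big((2-q)S_{\hat{r}}^{\,r}/((2r-q)C(n,\mu))\big)^{1/(2r-2)}+o(1)$ obtained from Theorem \ref{HLS}.

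Since $\varphi_{u_k}''(1)=-(2r-q)E_\lambda(u_k)$ for $u_k\in\mathcal{M}_\lambda$, the assumption forces $E_\lambda(u_k)=o(1)$; but inserting the above lower bound for $\|u_k\|_{X_1}$ into the estimate \eqref{2.13} for $E_\lambda$, and using the definition \eqref{eqb10} of $\lambda_0$ to make the bracketed factor strictly positive, gives $E_\lambda(u_k)\ge c_0>0$ for $\lambda<\lambda_0$, a contradiction. Hence $|\varphi_{u_k}''(1)|$ is bounded away from zero, $\|\xi_k'(0)\|$ is uniformly bounded, and the displayed inequality yields $\|\mathcal{I}_\lambda'(u_k)\|_{X_1^*}\le C/k=o_k(1)$, completing the construction. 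I expect the only genuinely delicate point to be this separation $|\varphi_{u_k}''(1)|\ge c_0$; the rest is bookkeeping with the Ekeland inequality and the bounds already recorded in Lemmas \ref{b5}, \ref{L35} and \ref{tt}.
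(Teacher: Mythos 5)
Your proposal is correct and follows essentially the same route as the paper: Ekeland's variational principle on $\mathcal{M}_\lambda$ (justified by Lemma \ref{le44}), two-sided bounds on $\|u_k\|_{X_1}$, and then the projection map of Lemma \ref{tt} with a uniform bound on $\xi_k'(0)$, whose denominator $\varphi_{u_k}''(1)=-(2r-q)E_\lambda(u_k)$ is kept away from zero exactly by the $\lambda<\lambda_0$ mechanism of Lemma \ref{b5} and \eqref{eqb10} --- this is precisely the argument the paper outsources to \cite[Proposition 4.1]{goel1}. The only blemish is the intermediate inequality $\mathcal{I}_\lambda(u_k)\ge -C\big(\|u_k\|_{X_1}^2+\|u_k\|_{X_1}^p\big)$, which fails for small norms since eliminating the Choquard term on $\mathcal{M}_\lambda$ leaves the bound $\mathcal{I}_\lambda(u_k)\ge -\lambda C\|u_k\|_{X_1}^q$; your conclusion $\|u_k\|_{X_1}\ge\delta>0$ follows just as well from this corrected bound together with $\sigma_\lambda\le\sigma_\lambda^+<0$, so nothing is lost.
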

\begin{proof}
Using Lemma \ref{le44} and  Ekeland variational principle \cite{eke1974}, there exists a minimizing sequence $\{u_k\}\subset\mathcal{N}_\lambda $ such that
\begin{equation}\label{evp1}
\mathcal{I}_\lambda(u_k)< \sigma_\lambda+\frac{1}{k}, \mbox{ and}
\end{equation}
\begin{equation*}
\mathcal{I}_\la(u_k)< \mathcal{I}_\la(v)+\frac{1}{k}\|v-u_k\|_{X_1}\; \textrm{ for each}\; v \in \mathcal{M}_\la.
\end{equation*}
By taking $k$ large, using equation \eqref{evp1} and Lemma \ref{L35}, we deduce that
\begin{align*}
\mc I_{\la}(u_k)&= \left(\frac{1}{2}-\frac{1}{2r}\right)\|u_k\|_{X_1}^{2} +\ba \left(\frac{1}{p}-\frac{1}{2r}\right)\|u_k\|_{X_2}^{p}-\la\left(\frac{1}{q}-\frac{1}{2r}\right)\int_\Om a(x)|u_k|^{q}dx\\&< \sigma_\la+\frac{1}{k}<\sigma_\la^{+}<0,
\end{align*}
which gives us $u_k\not\equiv 0$ for $k$ large enough.
Now, using H\"older's inequality, we get
\begin{equation*}
\left(\frac{\;2qr(-\sigma_{\la}^{+})S_d^{\frac{q}{2}}}{\la(2r-q)\|a\|_{\frac{d}{d-q}}}\right)^{1/q}\leq \|u_k\|_{X_1}\leq \left(\frac{\;\la (2r-q)\|a\|_{\frac{d}{d-q}}}{q (r-1)S_d^{\frac{q}{2}}}\right)^{1/(2-q)}.
\end{equation*}
\noi Then, proof of the result $\mathcal{I}^{\prime}_\lambda(u_k)\rightarrow 0$, as $k\rightarrow \infty$ follows exactly on the same line of \cite[Proposition 4.1]{goel1}.\QED
\end{proof}
\section{Subcritical case 	(when $r<2^*_{\mu}$)}
In this section, we prove the existence of at least two non-negative solutions of problem $(P_\la)$.
\begin{Lemma}\label{lemm4}
 The functional  $\mathcal{I}_\la$ satisfies $(PS)_c$ condition for all $c \in \mathbb{R}$. That is, if $\{u_k\}\subset X_1$ satisfies
	\begin{equation}\label{eqb16}
	     \mc {I}_\la(u_k)=c+o_k(1) \;\text{and}\; \mc {I}'_\la(u_k)=o_k(1)\; \textrm{in}\; X_{1}^{\prime},
	\end{equation}
	then $\{u_k\}$ has a convergent subsequence in $X_1$.
\end{Lemma}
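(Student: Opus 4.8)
The plan is to prove the Palais--Smale condition by first establishing that any $(PS)_c$ sequence is bounded in $X_1$, then extracting a weakly convergent subsequence and upgrading the weak convergence to strong convergence. Since $r<2^*_\mu$ in this subcritical regime, the nonlocal Choquard term is compact, which is precisely what makes the $(PS)_c$ condition hold for \emph{every} $c\in\mb R$ (unlike the critical case, where compactness fails above a threshold level).

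First I would show boundedness. Suppose $\{u_k\}$ satisfies \eqref{eqb16}. Using $\mc I_\la(u_k)-\frac{1}{2r}\langle \mc I_\la^\prime(u_k),u_k\rangle$, the highest-order nonlocal term cancels, leaving
\begin{align*}
\mc I_\la(u_k)-\frac{1}{2r}\langle \mc I_\la^\prime(u_k),u_k\rangle
=\Big(\tfrac12-\tfrac{1}{2r}\Big)\|u_k\|_{X_1}^2
+\ba\Big(\tfrac1p-\tfrac{1}{2r}\Big)\|u_k\|_{X_2}^p
-\la\Big(\tfrac1q-\tfrac{1}{2r}\Big)\int_\Om a(x)|u_k|^q\,dx.
\end{align*}
The first two terms are positive since $p<2<2r$, and the subcritical term $\int_\Om a|u_k|^q$ is controlled by H\"older's inequality and the embedding $X_1\hookrightarrow L^d(\Om)$, giving a bound of the form $\la C\|a\|_{d/(d-q)}S_d^{-q/2}\|u_k\|_{X_1}^q$. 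Because the left-hand side is $c+o_k(1)+o_k(1)\|u_k\|_{X_1}$ and $q<2$, the leading $\|u_k\|_{X_1}^2$ term dominates, so $\{u_k\}$ is bounded in $X_1$. Lemma \ref{L1} then gives boundedness in $X_2$ as well.

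Next, by reflexivity of $X_1$, pass to a subsequence with $u_k\rp u$ in $X_1$, and by the compact embeddings of $X_1$ into $L^m(\Om)$ for $m<2^*_{s_1}$ (together with the fact that $r<2^*_\mu$ so that $\hat r=\frac{2n}{2n-\mu}r<2^*_{s_1}$), obtain $u_k\ra u$ strongly in $L^{\hat r}(\Om)$ and $L^q(\Om)$, and $u_k\ra u$ a.e. This compactness lets me pass to the limit in the lower-order term $\int_\Om a(x)|u_k|^{q-2}u_k v$ and, via the Hardy--Littlewood--Sobolev inequality (Theorem \ref{HLS}), in the Choquard term $\mc A_r(u_k,v)$ as well. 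The standard argument here is to test $\mc I_\la^\prime(u_k)-\mc I_\la^\prime(u)$ against $u_k-u$; the compact terms vanish, leaving
\begin{align*}
\big(\mc K_1(u_k,u_k-u)-\mc K_1(u,u_k-u)\big)
+\ba\big(\mc K_2(u_k,u_k-u)-\mc K_2(u,u_k-u)\big)=o_k(1).
\end{align*}

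The main obstacle is converting this into strong convergence, since $\mc K_1$ and $\mc K_2$ are nonlinear monotone operators rather than inner products. For the $p$-Laplacian part I would invoke the standard algebraic inequalities for $|a-b|^{p-2}(a-b)$ (the Simon-type inequalities), which give $\mc K_2(u_k,u_k-u)-\mc K_2(u,u_k-u)\ge c_p\|u_k-u\|_{X_2}^{\max\{2,p\}}$ up to the usual case distinction for $1<p<2$; the $s_1$-Laplacian term with exponent $2$ is simpler, yielding $\mc K_1(u_k,u_k-u)-\mc K_1(u,u_k-u)=\|u_k-u\|_{X_1}^2+o_k(1)$ after using $u_k\rp u$ to discard the mixed terms. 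Both bracketed quantities being nonnegative and summing to $o_k(1)$ forces each to vanish, whence $\|u_k-u\|_{X_1}\ra 0$. Thus $u_k\ra u$ strongly in $X_1$, and the $(PS)_c$ condition holds for all $c\in\mb R$.
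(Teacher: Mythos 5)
Your proposal is correct and follows essentially the same route as the paper: boundedness of the $(PS)_c$ sequence (which the paper leaves as ``easy to verify''), weak convergence plus compact embeddings and Hardy--Littlewood--Sobolev to kill the $a$-term and the Choquard term when tested against $u_k-u$, the exact identity $\mc K_1(u_k,u_k-u)-\mc K_1(u,u_k-u)=\|u_k-u\|_{X_1}^2$, and the Simon-type algebraic inequality for the fractional $p$-part. The one point to state carefully is that for $1<p<2$ the Simon inequality does not give $c_p\|u_k-u\|_{X_2}^2$ outright but rather, as in the paper, $\|u_k-u\|_{X_2}^p\le C\big(\mc K_2(u_k,u_k-u)-\mc K_2(u,u_k-u)\big)^{p/2}$ after absorbing the factor $\big(\|u_k\|_{X_2}^p+\|u\|_{X_2}^p\big)^{(2-p)/2}$, which is legitimate precisely because you have already established boundedness in $X_2$.
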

\begin{proof}
	Let $\{u_{k}\}\subset X_1$ satisfies \eqref{eqb16}. Then it is easy to verify that sequence $\{u_k\}$ is bounded in $X_1$. So up to subsequence $u_{k} \rightharpoonup u_{0}$ weakly in $\mathrm{X}_{1}$, $u_{k} \rightarrow u_{0}$ strongly in $\mathrm{L}^{\nu}(\Om), 1 \le \nu < 2^*_{s_1}$ and $u_{k}(x) \rightarrow u_{0}(x)$ a.e. in $\Omega$. Since $\langle\mathcal{I}_{\la}^{\prime}(u_k)-\mathcal{I}_{\la}^{\prime}(u_0), (u_{k}-u_{0})\rangle \rightarrow 0$ as $k\rightarrow \infty$, we have
	\begin{equation}\label{eqb17}
	\begin{aligned}
	  o_k(1)= &\langle\mc {I}_\la^\prime (u_k)-\mc {I}_\la^\prime(u_0), u_k-u_0\rangle \\
	    = &\mc K_1(u_k, u_k-u_0)- \mc K_1(u_0, u_k-u_0)+\ba (\mc K_2(u_k, u_k-u_0)- \mc K_2(u_0, u_k-u_0) )\\
	    & -\la\int_{\Om} a(x)\big(|u_k(x)|^{q-2}u_k(x)-|u_0(x)|^{q-2}u_0(x)\big)(u_k(x)-u_0(x))dx \\ &- \big(\mc A_r(u_k,u_k-u_0)-\mc A_r(u_0,u_k-u_0)\big).
	    \end{aligned}
	\end{equation}
 Using  H\"older inequality and the fact that $d<2^*_{s_1}$, we obtain
	\begin{equation*}
	\int_\Om{a(x)|u_{k}|^{q-2}u_k(u_k-u_0)dx} \leq \|a\|_{\frac{d}{d-q}}\|u_k\|_{d}^{q-1}\|u_k-u_0\|_{d}\rightarrow 0\;\;\mathrm{as}\;\; k \rightarrow \infty.
	\end{equation*}
	
	\noi Again, using  H\"older inequality, Hardy-Littlewood-Sobolev inequality with the fact $\hat{r}=\frac{2nr}{2n-\mu}<2^*_{s_1}$, we deduce that
	\begin{align*}
    \mc A_r(u_k,u_k-u_0)&=  \int_{\Om} \int_{\Om} \frac{ |u_k(x)|^{r}|u_k(y)|^{r-2}u_k(y)(u_k(y)-u_0(y))}{|x-y|^{\mu}} ~dx dy \\
    &\leq C(n,\mu)\|u_k\|_{\hat r}^{2r-1}\|u_k-u_0\|_{\hat r}\ra 0, \mbox{ as }k\ra\infty.
	\end{align*}
	Now, we claim that  the sequence $\{u_k\}$ has a convergent subsequence.\\
  Using the definition of $\mc K_1$, it is easy to see that $\mc K_1(u_k, u_k-u_0)- \mc K_1(u_0, u_k-u_0)=\|u_k-u_0\|_{X_1}^2$.
	Furthermore, since we know that
	\begin{align*}
	|a-b|^{\eta} \leq C_{\eta}\left((|a|^{\eta-2}a-|b|^{\eta-2}b)(a-b)\right)^{\frac{\eta}{2}}\left(|a|^\eta+|b|^{\eta}\right)^{\frac{2-\eta}{2}} \;\text{for}\; a, b \in \mathbb{R}^{n},\; 1<\eta\leq  2,
	\end{align*}
	\noi where $C_\eta$ is some positive constant depending on $\eta$.
	Set $a=u_k(x)-u_k(y)$, $b=u_0(x)-u_0(y)$ and then using H\"older inequality, we deduce that
	\begin{align*}
	\|u_k-u_\la\|_{X_2}^{p}
	&\le C (\mc K_2(u_k, u_k-u_0) - \mc K_2(u_0, u_k-u_0) )^\frac{p}{2} &\\
	&\qquad\quad \left(\ds\int_Q\frac{|u_k(x)-u_k(y)|^{p}+|u_0(x)-u_0(y)|^{p}}{|x-y|^{n+ps_2}} \right)^\frac{2-p}{2}
	\end{align*}
	and boundedness of $\{u_k\}$ in $X_2$ (follows from boundedness in $X_1$ and Lemma \ref{L1}), implies
	$$\|u_k-u_0\|_{X_2}^{p}\leq C (\mc K_2(u_k, u_k-u_0) - \mc K_2(u_0, u_k-u_0) )^\frac{p}{2}. $$
	Collecting all these informations in \eqref{eqb17}, we obtain
	\begin{align*}
	o_k(1)=\langle \mc{I}_\la^\prime(u_k)-\mc{I}_\la^\prime(u_0), u_k-u_0\rangle \ge \frac{1}{C}\big(\|u_k-u_0\|_{X_1}^2+\ba\|u_k-u_0\|_{X_2}^2\big).
	\end{align*}
	Hence, it concludes proof of the claim. \QED
\end{proof}

\noi\textbf{Proof of Theorem \ref{th1} :} Using Proposition \ref{propb2} and Lemma \ref{lemm4}, there exist minimizing sequences $\{u_k\}\in \mathcal{M}_\lambda^{+} $, $\{v_k\}\in \mathcal{M}_\lambda^{-} $ and $u_0$ and $v_0 \in X_1$ such that $u_k\rightarrow u_0$  and  $v_k\rightarrow v_0$ strongly in $X_1$ for $\la\in(0, \la_0)$. Therefore, for $\la\in(0, \la_0)$, $u_0,v_0$ are weak solutions of problem $(P_\la)$. By means of Lemma \ref{L35}, we conclude that $u_0,v_0\not\equiv 0$, hence $u_0\in \mathcal{M}_\la^{+}$ and $v_0\in \mathcal{M}_\la^{-}$. Moreover, $\mathcal{I}_\lambda(u_0)=\sigma_\lambda^{+}$ and $\mathcal{I}_\lambda(v_0)=\sigma_\lambda^{-}$. Since $\mathcal{M}_\lambda^+\cap \mathcal{M}_\lambda^-=\emptyset$, therefore $u_0$ and $v_0$ are distinct solutions.\\
Now we prove non-negativity of $u_0$. If $u_0\ge 0$, then we have a non negative solution of problem $(P_\la)$, which is also a minimizer for $\mc I_\la$ in $\mc M_\la^+$, otherwise we have $|u_0|\not\equiv 0$, hence by fibering map analysis we get unique $t_1>0$ such that $t_1u_0\in\mc M_\la^+$. We note that $\psi_{|u_0|}(1)\leq \psi_{u_0}(1)=\la\ds\int_\Om a(x)|u_0|^q= \psi_{|u_0|}(t_1)\le \psi_{u_0}(t_1)$ and $0< \psi^\prime_{u_0}(1)$, because of the fact $u_0\in\mc M_\la^+$, which implies $t_1\ge 1$. Thus,
$$\sigma_\la^+\le\varphi_{|u_0|}(t_1)\leq \varphi_{|u_0|}(1)\leq\varphi_{u_0}(1)=\sigma_{\la}^+.$$
Hence, $\mc I_\la(t_1|u_0|)=\varphi_{|u_0|}(t_1)=\sigma_\la^+$ and $t_1|u_0|\in\mc M_\la^+$ that is, $t_1|u_0|$ is a nonnegative solution of problem $(P_\la)$ in $\mc M_\la^+$. \QED

\section{Critical case 	(when $r=2^*_{\mu}$)}
In this section we assume the function $a(x)$ is continuous in $\Om$ and $a^+(x)=\max\{a(x),0\}\not\equiv 0$. Then without loss of generality we may assume there exists $\de_1>0$ such that $m_a:=\inf_{x\in B_{\de_1}}a(x)>0$.
\begin{Theorem}\label{thb2}
Let $\{u_k\}\subset \mathcal{M}_{\la}$ be a $(PS)_c$ sequence for $\mathcal{I}_{\la}$ such that $u_k \rightharpoonup u$ weakly in $X_1$, then $\mathcal{I}_{\la}^{\prime}(u)=0$. Moreover, there exists a positive constant $ C_0=C_0(q,s_1,n,S,|\Om|)$  such that $\mathcal{I}^{\prime}_{\la}(u)\geq - C_0 \la^{\frac{2}{2-q}}$,
where
\begin{equation}\label{eqb27}
C_0=\left(\frac{(2.2^*_\mu-q)(2-q)}{2.2.2^*_\mu\; q}\right) \left(\frac{2.2^*_\mu-q}{2.2^*_\mu-2}\right)^{\frac{q}{2-q}}S^{\frac{-q}{2-q}}\|a\|_{\infty}^{\frac{2}{2-q}}|\Om|^\frac{2(2^*_{s_1}-q)}{2^*_{s_1}(2-q)}.
\end{equation}
\end{Theorem}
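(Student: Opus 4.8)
The plan is to prove Theorem \ref{thb2} in two stages: first establish that the weak limit $u$ is a critical point of $\mathcal{I}_\la$, and then derive the quantitative lower bound on $\mathcal{I}_\la(u)$. For the first stage, I would fix an arbitrary test function $v\in X_1$ and pass to the limit in $\langle\mathcal{I}_\la^\prime(u_k),v\rangle=o_k(1)$. The linear term $\mathcal{K}_1(u_k,v)$ converges to $\mathcal{K}_1(u,v)$ because $u_k\rightharpoonup u$ weakly in $X_1$; the term $\ba\mathcal{K}_2(u_k,v)$ converges similarly using the boundedness of $\{u_k\}$ in $X_2$ (via Lemma \ref{L1}) and weak convergence. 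For the subcritical nonlinearity $\la\int_\Om a(x)|u_k|^{q-2}u_k v\,dx$, I would use the compact embedding $X_1\hookrightarrow\hookrightarrow L^\nu(\Om)$ for $\nu<2^*_{s_1}$ together with the H\"older estimate already employed in Lemma \ref{lemm4}. The genuinely delicate term is the Choquard term $\mathcal{A}_{2^*_\mu}(u_k,v)$; here I would invoke the a.e.\ convergence $u_k\to u$ combined with the Brezis--Lieb type lemma for the nonlocal term and a standard argument showing the weak limit of $|u_k|^{2^*_\mu-2}u_k$ in the appropriate dual space identifies with $|u|^{2^*_\mu-2}u$. Piecing these together yields $\langle\mathcal{I}_\la^\prime(u),v\rangle=0$ for all $v\in X_1$, i.e.\ $\mathcal{I}_\la^\prime(u)=0$.

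For the second stage I would exploit the fact that, being a critical point, $u$ satisfies $\langle\mathcal{I}_\la^\prime(u),u\rangle=0$, that is
\begin{equation*}
\|u\|_{X_1}^2+\ba\|u\|_{X_2}^p-\la\int_\Om a(x)|u|^q\,dx-\int_\Om\int_\Om\frac{|u(x)|^{2^*_\mu}|u(y)|^{2^*_\mu}}{|x-y|^\mu}\,dxdy=0.
\end{equation*}
The strategy is to write $\mathcal{I}_\la(u)$ and eliminate the nonlocal term using this identity. Subtracting $\tfrac{1}{2\cdot 2^*_\mu}\langle\mathcal{I}_\la^\prime(u),u\rangle$ from $\mathcal{I}_\la(u)$ gives
\begin{equation*}
\mathcal{I}_\la(u)=\left(\frac12-\frac{1}{2\cdot2^*_\mu}\right)\|u\|_{X_1}^2+\ba\left(\frac1p-\frac{1}{2\cdot2^*_\mu}\right)\|u\|_{X_2}^p-\la\left(\frac1q-\frac{1}{2\cdot2^*_\mu}\right)\int_\Om a(x)|u|^q\,dx.
\end{equation*}
Since $p<2<2\cdot2^*_\mu$ and $\ba>0$, the middle term is nonnegative and may be discarded, leaving a lower bound involving only the $X_1$-norm term and the sign-changing weight term. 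The $X_1$-term has the favorable coefficient $\frac{2^*_\mu-1}{2\cdot2^*_\mu}=\frac{2\cdot2^*_\mu-2}{2\cdot2\cdot2^*_\mu}$, while the weight term carries the negative coefficient $-\la\frac{2\cdot2^*_\mu-q}{2\cdot2^*_\mu\,q}$.

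The crux is then to minimize the resulting expression $g(t):=\frac{2\cdot2^*_\mu-2}{2\cdot2\cdot2^*_\mu}\,t^2-\la\frac{2\cdot2^*_\mu-q}{2\cdot2^*_\mu\,q}\,C\,t^q$ over $t=\|u\|_{X_1}\ge0$, where the constant $C$ comes from bounding $\int_\Om a(x)|u|^q\,dx$. To produce exactly the constant $C_0$ in \eqref{eqb27}, I would estimate $\int_\Om a(x)|u|^q\,dx\le\|a\|_\infty\int_\Om|u|^q\,dx$, then apply H\"older's inequality with exponents $2^*_{s_1}/q$ and its conjugate to get $\int_\Om|u|^q\le|\Om|^{(2^*_{s_1}-q)/2^*_{s_1}}\|u\|_{2^*_{s_1}}^q$, and finally use the Sobolev embedding $\|u\|_{2^*_{s_1}}^2\le S^{-1}\|u\|_{X_1}^2$ to convert everything into powers of $t=\|u\|_{X_1}$. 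The function $g$ is bounded below since $q<2$; its minimum over $[0,\infty)$ is attained at an interior point and is a negative multiple of $\la^{2/(2-q)}$. Carrying out the elementary one-variable minimization (differentiate, solve for the critical $t$, substitute back) yields precisely $\mathcal{I}_\la(u)\ge -C_0\la^{2/(2-q)}$ with $C_0$ as stated. I expect the main obstacle to be the first stage---rigorously passing to the limit in the nonlocal Choquard term to obtain $\mathcal{I}_\la^\prime(u)=0$---rather than the bookkeeping in the second stage, which is a routine scalar optimization once the coefficients are tracked carefully.
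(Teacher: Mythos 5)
Your proposal is correct and follows essentially the same route as the paper: the same weak-convergence/Riesz-potential argument identifies $u$ as a critical point, and the same identity $\mathcal{I}_\la(u)-\tfrac{1}{2\cdot 2^*_\mu}\langle\mathcal{I}_\la^{\prime}(u),u\rangle$ with the $\ba$-term discarded, followed by the $\|a\|_\infty$--H\"older--Sobolev chain, yields the lower bound. Your direct one-variable minimization of $g(t)=\alpha t^2-\la\beta t^q$ is exactly equivalent to the paper's optimally calibrated Young inequality and reproduces the constant $C_0$ of \eqref{eqb27} verbatim.
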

\begin{proof}
Since $u_k \rightharpoonup  u$ in $X_1$, it implies $\{u_k\}$ is a bounded sequence in $X_1$, and up to subsequence, $u_k  \ra u \text{ in } L^{\nu}(\Om),\; 1\leq \nu < 2^*_{s_1}$ and $u_k \ra u$ a.e. in $\Om$. Now from the proof of \cite[Theorem 4.3]{goel1} it follows that
$$
\mathcal{K}_i( u_k , v )\ra\mathcal{K}_i( u , v) \text{ for } i=1,2, \text{ and } \int_{\Om}a(x)\big(|u_k(x)|^{q-2}u_k(x)-|u(x)|^{q-2}u(x)\big)v(x)\;dx \ra 0 $$
as $k\ra\infty$, for all $v \in X_1$.
From the continuous embedding of $X_1$ into $L^{2^*_{s_1}}$, we get
$u_k\rightharpoonup u \text{ weakly in }L^{2^*_{s_1}},$
as $k\rightarrow \infty$. Therefore,
 $|u_k|^{2^*_{\mu}} \rp |u|^{2^*_{\mu}}$ in $L^{2^*_{s_1}/2^*_{\mu}}(\Om)$ and we know that Riesz potential defines a continuous linear map from  $L^{2^*_{s_1}/2^*_{\mu}}(\Om)$ to $L^{\frac{2n}{\mu}}(\Om)$, thus we have
\begin{align*}
|x|^{-\mu}*|u_k|^{2^*_{\mu}}\rp |x|^{-\mu}*|u|^{2^*_{\mu}} \text{ in } L^{\frac{2n}{\mu}}(\Om).
\end{align*}
Moreover, we have $|u_k|^{2^*_{\mu}-2}u_k \rp |u|^{2^*_{\mu}-2}u$ in $L^{2^*_{s_1}/(2^*_{\mu}-1)}(\Om)$. Combining all these facts, we obtain
\begin{equation*}
\begin{aligned}
\int_{\Om} \int_{\Om} \frac{|u_k(x)|^{2^*_{\mu}}|u_k(y)|^{2^*_{\mu}-2}u_k(y) \phi(y)}{|x-y|^{\mu}}~dxdy   \ra \int_{\Om} \int_{\Om}\frac{|u(x)|^{2^*_{\mu}}|u(y)|^{2^*_{\mu}-2}u(y) \phi(y)}{|x-y|^{\mu}}~dxdy
\end{aligned}
\end{equation*}

\noi Therefore, we have
\begin{equation*}
\begin{aligned}
\langle \mathcal{I}_{\la}^{\prime}(u_k)-\mathcal{I}_{\la}^{\prime}(u),\phi \rangle &= \mc K_1( u_k,\phi) -\mc K_1( u,\phi )+\ba\left(\mc K_2( u_k,\phi) -\mc K_2( u,\phi )\right)\\ &\;\;-\int_{\Om}a(x)\left(|u_k(x)|^{q-2}u_k(x)-|u(x)|^{q-2}u(x)\right)\phi(x)\;dx\\
& \;\; -\big(\mc A_{2^*_{\mu}}(u_k,\phi)-\mc A_{2^*_{\mu}}(u,\phi)\big)
   \ra 0 \qquad \text{ for } \phi \in X_1.
\end{aligned}
\end{equation*}
\noi This implies  $ \mathcal{I}_{\la}^{\prime}(u)=0$. In particular, $\langle \mathcal{I}_{\la}^{\prime}(u),u\rangle=0$, that is
\begin{equation}\label{eqb25}
\begin{aligned}
\mathcal{I}_{\la}(u) &= \left(\frac{1}{2}-\frac{1}{2 .2^*_{\mu}}\right)\|u\|_{X_1}^{2} +\ba \left(\frac{1}{p}-\frac{1}{2 .2^*_{\mu}}\right)\|u\|_{X_2}^{p} -\la\left(\frac{1}{q}-\frac{1}{2 .2^*_{\mu}}\right)\int_\Om a(x)|u|^{q}~dx\\
& \geq \left(\frac{1}{2}-\frac{1}{2 .2^*_{\mu}}\right)\|u\|_{X_1}^{2}-\la \left(\frac{1}{q}-\frac{1}{2 .2^*_{\mu}}\right)\int_\Om a(x)|u|^{q}~dx.
\end{aligned}
\end{equation}

 \noi By H\"older inequality, Sobolev embeddings and Young inequality, we obtain
\begin{equation}\label{eqb26}
\begin{aligned}
\la \int_\Omega a(x)|u|^{q}dx &\leq \la \|a\|_{\infty}S^{\frac{-q}{2}} |\Om|^{\frac{2^*_{s_1}-q}{2^*_{s_1}}} \|u\|_{X_1}^{q}\\
&= \left(\frac{2}{q}\left(\frac{1}{2}-\frac{1}{2.2^*_{\mu}}\right) \left(\frac{1}{q}-\frac{1}{2.2^*_{\mu}}\right)^{-1}\right)^\frac{q}{2}\|u\|_{X_1}^{q}\\
&\qquad \quad \la \left(\frac{2}{q}\left(\frac{1}{2}-\frac{1}{2.2^*_{\mu}}\right) \left(\frac{1}{q}-\frac{1}{2.2^*_{\mu}}\right)^{-1}\right)^\frac{-q}{2}\|a\|_{\infty} \; |\Om|^{\frac{2^*_{s_1}-q}{2^*_{s_1}}} S^{\frac{-q}{2}} \\
& \leq \left(\frac{1}{2}-\frac{1}{2.2^*_{\mu}}\right) \left(\frac{1}{q}- \frac{1}{2.2^*_{\mu}}\right)^{-1}\|u\|_{X_1}^{2}\\
&\qquad+ \left(\frac{2-q}{2}\right)\left(\frac{2.2^*_{\mu}-q}{2.2^*_{\mu}-2}\right)^{\frac{q}{2-q}}  S^{\frac{-q}{2-q}}\|a\|_{\infty}^{\frac{2}{2-q}} |\Om|^{\frac{2(2^*_{s_1}-q)}{(2-q)2^*_{s_1}}} \la ^{\frac{2}{2-q}},
\end{aligned}
\end{equation}
\noi Therefore, result follows from equations \eqref{eqb25} and \eqref{eqb26} with $$C_0= \left(\frac{1}{q}-\frac{1}{2 .2^*_{\mu}}\right)\left(\frac{2-q}{2}\right)\left(\frac{2.2^*_{\mu}-q}{2.2^*_{\mu}-2}\right)^{\frac{q}{2-q}}  S^{\frac{-q}{2-q}}\|a\|_{\infty}^{\frac{2}{2-q}} |\Om|^{\frac{2(2^*_{s_1}-q)}{(2-q)2^*_{s_1}}}.$$\QED
\end{proof}

\begin{Lemma}\label{lem3}
(Palais-Smale range). $\mathcal{I}_{\la}$ satisfies the $(PS)_c$ condition with $c\in(-\infty,c_\infty)$, where
\begin{equation*}
 c_{\infty}:=\left(\frac{n-\mu+2s_1}{2(2n-\mu)}\right) \frac{S^{\frac{2n-\mu}{n-\mu+2s_1}}}{\left(C(n,\mu)\right)^{\frac{n-2s_1}{n-\mu+2s_1}}} -C_0 \la^{\frac{2}{2-q}}
 \end{equation*}
and $C_0$ is the positive constant defined in \eqref{eqb27}.
\end{Lemma}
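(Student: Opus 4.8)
The plan is to run the nonlocal analogue of the Brezis--Nirenberg concentration--compactness argument and to show that any loss of compactness would force the energy level up to $c_\infty$. Fix a $(PS)_c$ sequence $\{u_k\}$ with $c<c_\infty$. First I would establish boundedness in $X_1$: subtracting $\frac{1}{2\cdot 2^*_{\mu}}\langle\mc I_\la'(u_k),u_k\rangle=o_k(1)$ from $\mc I_\la(u_k)=c+o_k(1)$ cancels the critical Choquard contribution and leaves the coercive combination $(\tfrac12-\tfrac{1}{2\cdot 2^*_{\mu}})\|u_k\|_{X_1}^2+\ba(\tfrac1p-\tfrac{1}{2\cdot 2^*_{\mu}})\|u_k\|_{X_2}^p$ minus the subcritical quantity $\la(\tfrac1q-\tfrac{1}{2\cdot 2^*_{\mu}})\int_{\Om}a|u_k|^q$, the last being absorbed by H\"older and Young inequalities exactly as in Lemma \ref{le44}. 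Thus, up to a subsequence, $u_k\rp u$ in $X_1$, $u_k\ra u$ in $L^\nu(\Om)$ for $1\le\nu<2^*_{s_1}$ and a.e. in $\Om$. By Theorem \ref{thb2} the weak limit satisfies $\mc I_\la'(u)=0$, whence $\langle\mc I_\la'(u),u\rangle=0$ and $\mc I_\la(u)\ge -C_0\la^{2/(2-q)}$.

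Next, writing $w_k:=u_k-u$ (so $w_k\rp 0$ in $X_1$), I would record three asymptotic splittings. The subcritical term is harmless: since $u_k\ra u$ in $L^d(\Om)$ and $a\in L^{d/(d-q)}(\Om)$, H\"older's inequality gives $\int_{\Om}a|u_k|^q\ra\int_{\Om}a|u|^q$. The fractional $p$-term is compact: because $s_2<s_1$ and $1<p<2$, the embedding $X_1\hookrightarrow X_2$ is compact, so $u_k\ra u$ strongly in $X_2$ and $\|u_k\|_{X_2}^p\ra\|u\|_{X_2}^p$. For the leading seminorm the Brezis--Lieb identity yields $\|u_k\|_{X_1}^2=\|u\|_{X_1}^2+\|w_k\|_{X_1}^2+o_k(1)$, and the nonlocal Brezis--Lieb lemma for the critical Choquard nonlinearity (as in \cite{tuhinaBN,gao}) gives
\[\int_{\Om}\int_{\Om}\frac{|u_k(x)|^{2^*_{\mu}}|u_k(y)|^{2^*_{\mu}}}{|x-y|^{\mu}}dxdy=\int_{\Om}\int_{\Om}\frac{|u(x)|^{2^*_{\mu}}|u(y)|^{2^*_{\mu}}}{|x-y|^{\mu}}dxdy+\int_{\Om}\int_{\Om}\frac{|w_k(x)|^{2^*_{\mu}}|w_k(y)|^{2^*_{\mu}}}{|x-y|^{\mu}}dxdy+o_k(1).\]

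Feeding these splittings into $\langle\mc I_\la'(u_k),u_k\rangle=o_k(1)$ and subtracting $\langle\mc I_\la'(u),u\rangle=0$ leaves $\|w_k\|_{X_1}^2-\int_{\Om}\int_{\Om}\frac{|w_k(x)|^{2^*_{\mu}}|w_k(y)|^{2^*_{\mu}}}{|x-y|^{\mu}}dxdy=o_k(1)$; up to a further subsequence both quantities converge to a common limit $\ell\ge 0$. Inserting the same splittings into $\mc I_\la(u_k)=c+o_k(1)$ gives $c=\mc I_\la(u)+\big(\tfrac12-\tfrac{1}{2\cdot 2^*_{\mu}}\big)\ell$. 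Applying the definition of $S_H=S_H(\Om)$ to each $w_k$ yields $\|w_k\|_{X_1}^2\ge S_H\big(\int_{\Om}\int_{\Om}\frac{|w_k(x)|^{2^*_{\mu}}|w_k(y)|^{2^*_{\mu}}}{|x-y|^{\mu}}dxdy\big)^{1/2^*_{\mu}}$, so in the limit $\ell\ge S_H\,\ell^{1/2^*_{\mu}}$; hence either $\ell=0$ or $\ell\ge S_H^{2^*_{\mu}/(2^*_{\mu}-1)}$. In the second case, using $\mc I_\la(u)\ge -C_0\la^{2/(2-q)}$, the relation \eqref{eqb9} and the elementary exponent identities $\tfrac{2^*_{\mu}}{2^*_{\mu}-1}=\tfrac{2n-\mu}{n-\mu+2s_1}$, $\tfrac{1}{2^*_{\mu}-1}=\tfrac{n-2s_1}{n-\mu+2s_1}$ and $\tfrac12-\tfrac{1}{2\cdot 2^*_{\mu}}=\tfrac{n-\mu+2s_1}{2(2n-\mu)}$, one computes
\[c\ge -C_0\la^{2/(2-q)}+\Big(\tfrac12-\tfrac{1}{2\cdot 2^*_{\mu}}\Big)S_H^{2^*_{\mu}/(2^*_{\mu}-1)}=\left(\frac{n-\mu+2s_1}{2(2n-\mu)}\right)\frac{S^{\frac{2n-\mu}{n-\mu+2s_1}}}{\left(C(n,\mu)\right)^{\frac{n-2s_1}{n-\mu+2s_1}}}-C_0\la^{\frac{2}{2-q}}=c_\infty,\]
contradicting $c<c_\infty$. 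Therefore $\ell=0$, i.e. $\|w_k\|_{X_1}\ra 0$, which is precisely the strong convergence $u_k\ra u$ in $X_1$ and proves the $(PS)_c$ condition.

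The main obstacle is the compactness bookkeeping for the two nonstandard terms. The delicate point is the nonlocal Brezis--Lieb splitting of the critical Choquard energy, which relies on the weak convergences $|u_k|^{2^*_{\mu}}\rp|u|^{2^*_{\mu}}$ and $|u_k|^{2^*_{\mu}-2}u_k\rp|u|^{2^*_{\mu}-2}u$ in the correct Lebesgue spaces (already set up in the proof of Theorem \ref{thb2}) together with an a.e.-convergence argument. Secondarily, one must justify the strong convergence of the sub-quadratic fractional $p$-term through the compact embedding $X_1\hookrightarrow X_2$, since for $1<p<2$ the convexity inequality alone does not furnish an energy identity without such compactness. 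Once these two facts are secured, matching the threshold constant to $c_\infty$ is the routine exponent computation displayed above.
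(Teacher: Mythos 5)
Your proof is correct and follows the same Brezis--Nirenberg scheme as the paper: boundedness of the $(PS)_c$ sequence, identification of the weak limit as a critical point with $\mc I_\la(u)\ge -C_0\la^{2/(2-q)}$ via Theorem \ref{thb2}, Brezis--Lieb splittings of the energies, a dichotomy for the concentration level $\ell$, and the exponent bookkeeping that converts $\bigl(\tfrac12-\tfrac{1}{2\cdot 2^*_\mu}\bigr)S_H^{2^*_\mu/(2^*_\mu-1)}$ into the paper's constant through \eqref{eqb9}. The one genuine divergence is your treatment of the fractional $p$-term. The paper never separates it: it applies Brezis--Lieb to \emph{both} seminorms, takes $l$ to be the common limit of $\|u_k-u\|_{X_1}^2+\ba\|u_k-u\|_{X_2}^p$ and of $\|u_k-u\|_{NL}^{2\cdot 2^*_\mu}$, and works only with one-sided inequalities: $c-\mc I_\la(u)\ge \bigl(\tfrac12-\tfrac{1}{2\cdot 2^*_\mu}\bigr)l$ because $\tfrac1p>\tfrac12$, and
\begin{equation*}
\|u_k-u\|_{NL}^{2}\le C(n,\mu)^{1/2^*_\mu}S^{-1}\|u_k-u\|_{X_1}^{2}\le C(n,\mu)^{1/2^*_\mu}S^{-1}\bigl(\|u_k-u\|_{X_1}^{2}+\ba\|u_k-u\|_{X_2}^{p}\bigr),
\end{equation*}
so that only the \emph{continuous} embedding of Lemma \ref{L1} is ever used, the extra $\ba$-term pointing in the favorable direction at every step. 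You instead dispose of the term at the outset by asserting that $X_1\hookrightarrow X_2$ is compact, which buys you the cleaner exact identity $c=\mc I_\la(u)+\bigl(\tfrac12-\tfrac{1}{2\cdot 2^*_\mu}\bigr)\ell$ and isolates the concentration entirely in the $s_1$-seminorm. Be aware that this compactness is stated nowhere in the paper (Lemma \ref{L1} gives continuity only), so you must supply it; it is true and short on a bounded $\Om$: splitting the Gagliardo seminorm at $|x-y|\le\de$ and applying H\"older with exponents $\tfrac2p$, $\tfrac{2}{2-p}$ leaves the weight $|x-y|^{-n+2p(s_1-s_2)/(2-p)}$, integrable near the diagonal, giving a uniform bound $C\|u_k-u\|_{X_1}^p\,\de^{p(s_1-s_2)}$ there, while the region $|x-y|>\de$ is bounded by $C\de^{-ps_2}\|u_k-u\|_{L^p(\Om)}^p\to 0$ by the compact embedding into $L^p(\Om)$. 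In short: your route is a valid, slightly stronger-hypothesis variant; the paper's lumping of the two seminorms into one limit $l$ is more economical, needing no compactness for $X_2$ at all and delivering strong convergence in both seminorms simultaneously once $l=0$.
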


\begin{proof}
\noi Let $\{u_k\}$ be a $(PS)_c$ sequence of $\mathcal{I}_{\la}$ in $X_1$. Then we have
\begin{equation}\label{eqb28}
\begin{aligned}
\frac{1}{2} \|u_k\|_{X_1}^2+ \frac{\ba}{p} \|u_k\|_{X_2}^p -\frac{\la}{q}\int_{\Om}a(x)|u_k|^{q}~dx
 -\frac{1}{2 .2^*_{\mu}} \|u_k\|_{NL}^{2.2^*_{\mu}}  =c+o_k(1)
\end{aligned}
\end{equation}
and
\begin{equation}\label{eqb29}
\begin{aligned}
\|u_k\|_{X_1}^2+\ba \|u_k\|_{X_2}^p -\la\int_{\Om}a(x)|u_k|^{q} dx
-\|u_k-u\|_{NL}^{2.2^*_{\mu}} =o_k(1).
\end{aligned}
\end{equation}
\noi As an easy consequence of this, we get $\{u_k\}$ is a bounded sequence in $X_1$. Therefore, up to a subsequence, $u_k \rightharpoonup u$ in $X_1$, for some $u\in X_1$ and by Lemma \ref{thb2}, we see that $u$ is a critical point of $\mathcal{I}_{\la}$.\\
\textbf{Claim:} $u_k \ra u$ strongly in $X_1$.\\
Since, $u_k \ra u$ strongly in $L^{\nu}(\Om)$ for $1\leq \nu < 2^{*}_{s_1}$, it implies $\int_{\Om}a(x)|u_k|^{q}dx \ra \int_{\Om}a(x)|u|^{q}~dx $.
Also, by Brezis -Leib Lemma, we have
\begin{equation}\label{eqb30}
\begin{aligned}
& \|u_k\|^{p_i}_{X_i}=\|u_k-u\|^{p_i}_{X_i}+\|u\|^{p_i}_{X_i}+ o_k(1),\quad 1\leq i \leq 2, \; p_1=2,\; p_2=p \quad  \text {and } \\&
\|u_k\|_{NL}^{2.2^*_{\mu}}=\|u\|_{NL}^{2.2^*_{\mu}}+ \|u_k-u\|_{NL}^{2.2^*_{\mu}}+ o_k(1).
\end{aligned}
\end{equation}
Therefore, by using equations \eqref{eqb28}, \eqref{eqb29} and \eqref{eqb30}, we get
\begin{equation}\label{eqb32}
\begin{aligned}
& \frac{1}{2}\|u_k-u\|^{2}_{X_1}+ \frac{\ba}{p}\|u_k-u\|^{p}_{X_2} -\frac{1}{2 .2^*_{\mu}} \|u_k-u\|_{NL}^{2.2^*_{\mu}} =c-\mathcal{I}_{\la}(u)+ o_k(1)\\
&\|u_k-u\|^{2}_{X_1}+\ba \|u_k-u\|^{p}_{X_2} -\|u_k-u\|_{NL}^{2.2^*_{\mu}}= o_k(1).
\end{aligned}
\end{equation}

\noi Hence, let $\|u_k-u\|^{2}_{X_1}+\ba \|u_k-u\|^{p}_{X_2} \ra l$ and  $\|u_k-u\|_{NL}^{2.2^*_{\mu}}  \ra l,  \text{ as } k \ra \infty$.
If $l=0$, then claim is proved. So, we assume $l>0$, then
\begin{align*}
l^{1/2^*_{\mu}} = \big(\lim_{k \ra \infty}\|u_k-u\|_{NL}^{2.2^*_{\mu}}  \big)^{1/ 2^*_{\mu}}
 \leq \ds \left(C(n,\mu)\right)^{1/2^*_{\mu}}
\lim_{k \ra \infty}\left( S^{-1}\|u_k-u\|_{X_1}^{2}\right)
 \leq \left(C(n,\mu)\right)^\frac{1}{2^*_{\mu}} S^{-1} l.
\end{align*}
This implies $l \geq C(n,\mu)^{\frac{-1}{2^*_{\mu}-1}}S^{\frac{2^*_{\mu}}{2^*_{\mu}-1}}$, that is, $l\ge \frac{S^{\frac{2n-\mu}{n-\mu+2s_1}}}{\left(C(n,\mu)\right)^{\frac{n-2s_1}{n-\mu+2s_1}}}  $.
\noi Now, from \eqref{eqb32}, we have
\begin{align*}
c-\mathcal{I}_{\la}(u) \geq \frac{1}{2}\big(\|u_k-u\|^{2}_{X_1}+\ba \|u_k-u\|^{p}_{X_2}\big) -\frac{1}{2 .2^*_{\mu}} \|u_k-u\|_{NL}^{2.2^*_{\mu}}
 = \left(\frac{n-\mu+2s_1}{2(2n-\mu)}\right)l.
\end{align*}
Therefore,  with the help of Theorem \ref{thb2}, we get
\begin{align*}
c\geq \left(\frac{n-\mu+2s_1}{2(2n-\mu)}\right)l +\mathcal{I}_{\la}(u)
\geq\left(\frac{n-\mu+2s_1}{2(2n-\mu)}\right) \frac{S^{\frac{2n-\mu}{n-\mu+2s_1}}}{\left(C(n,\mu)\right)^{\frac{n-2s_1}{n-\mu+2s_1}}} -C_0 \la^{\frac{2}{2-q}}=c_\infty,
\end{align*}
 which is a contradiction. \QED
\end{proof}

\noi\textbf{Proof of Theorem \ref{th2}} (i): Let $\ga_0>0$ be such that for all $\la\in(0,\ga_0)$, $c_\infty>0$ holds. Set $\La=\min\{\ga_0,\la_0\}>0$.
By Proposition \ref{propb2},  for all $\la\in(0,\La)$, there exists a minimizing sequence $u_k \in \mathcal{M}_{\la}$  such that
 $\mathcal{I}_{\la}(u_{k}) = \sigma_{\la}+o_k(1)$ and $\mathcal{I}_{\la}^{\prime}(u_{k}) = o_k(1)$. Now using the fact $\sigma_{\la}\leq \sigma_{\la}^{+}<0$ and applying Lemma \ref{lem3}, there exists a $u_\la \in X_1$ such that $u_k\ra u_\la$ in $X_1$. By Theorem \ref{thb2} and due to the fact $\sigma_\la<0$, we get $\mathcal{I}_{\la}^{\prime}(u_\la) =0$ and $\int_{\Om} a(x)|u_\la|^q>0$ and hence $u_\la\not\equiv 0$, thus $u_\la\in\mc M_\la$. Next, we will prove that $u_\la\in\mc M_\la^+$. Suppose on the contrary $u_\la\in\mc M_\la^-$, then by the fibering map analysis there exist $t_1<t_2=1$ such that $t_1u_\la\in\mc M_\la^+$ and $t_2u_\la\in\mc M_\la^-$.  Since $\phi_{u_\la}$ is increasing in $[t_1,t_2)$, it implies
 $$\sigma_\la\leq \mc I_\la(t_1u_\la)<\mc I_\la(tu_\la)\le\mc I_\la(u_\la)=\sigma_\la$$
 for $t\in(t_1,1)$, which is a contradiction. Hence $u_\la\in\mc M_\la^+$ and $\sigma_\la=\mc I_\la(u_\la)=\sigma_\la^+$. Moreover, by using same assertions and arguments as in proof of Theorem \ref{th1}, we obtain that $u_\la$ is nonnegative.\QED

\noi Consider the family of functions $U_\e(x)=\e^{-\frac{(n-2s_1)}{2}}u^*(\frac{x}{\e})$,
where $u^*(x)=\bar{u}\Big(\frac{x}{S^{\frac{1}{2s_1}}}\Big), \bar{u}(x)=\frac{\tilde{u}(x)}{\|\tilde{u}\|_{2^*_{s_1}}}$
	and $\tilde{u}(x)=d(b^2+|x|^2)^{-\frac{n-2s_1}{2}}$, $d\in\mb R\setminus\{0\}$ and $b>0$. (\cite{serva}) $U_\e$ satisfies
	\begin{align}\label{eqbm}
		(-\De)^{s_1}U_\e=|U_\e|^{2^*_{s_1}-2}U_\e, \mbox{ in }\mb R^n, \mbox{ and } \|U_\e\|_{X_1}^2=\|U_\e\|_{2^*_{s_1}}^{2^*_{s_1}}=S^\frac{n}{2s_1}.
	\end{align}
Let $\eta\in C_c^\infty(\Om)$ be such that $\eta=1$ in $B_\de(0)$, $\eta=0$ in $B^c_{2\de}(0)$, and $0\le\eta\le 1$ in $\Om$. Set $u_\e=\eta U_\e$, then we have the following estimates

\begin{Lemma}\label{lemm1}(see \cite{giac,serva})
	The following hold true
	\begin{enumerate}
		\item[(i)] $\|u_\e\|_{X_1}^2\le S^\frac{n}{2s_1}+O(\e^{n-2s_1})$
		\item[(ii)] $\|u_\e\|_{2^*_{s_1}}^{2^*_{s_1}}=S^\frac{n}{2s_1}+O(\e^n)$
		\item[(iii)] $\|u_\e\|_{NL}^2\leq C(n,\mu)^{\frac{n}{2.2^*_{\mu}s_1}}S_H^\frac{n-2s_1}{2s_1}+O(\e^n)$
		\item[(iv)] $\|u_\e\|_{NL}^2\geq \big(C(n,\mu)^{\frac{n}{2s_1}}S_H^\frac{2n-\mu}{2s_1}-O(\e^n)\big)^\frac{1}{2^*_{\mu}}$.
	\end{enumerate}
\end{Lemma}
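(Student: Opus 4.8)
The plan is to view $u_\e=\eta U_\e$ as a truncated Aubin--Talenti bubble and to estimate each quantity by comparison with the corresponding untruncated quantity for $U_\e$ over all of $\mb R^n$, where exact values are available, while controlling the truncation error through the decay of $U_\e$. The ingredients I would use are: the normalisation $\|U_\e\|_{X_1}^2=\|U_\e\|_{2^*_{s_1}}^{2^*_{s_1}}=S^{n/(2s_1)}$ from \eqref{eqbm}; the self-similar form $U_\e(x)=\e^{-(n-2s_1)/2}u^*(x/\e)$ together with the decay $u^*(z)\lesssim|z|^{-(n-2s_1)}$, which yields $U_\e(x)\lesssim\e^{(n-2s_1)/2}|x|^{-(n-2s_1)}$ for $|x|$ bounded away from the origin; and the fact that $U_\e$ also realises the Choquard--Sobolev constant $S_H$ on $\mb R^n$ (since $S_H=S_H(\Om)$ is attained only when $\Om=\mb R^n$), combined with the identity \eqref{eqb9}, namely $S_H=S/(C(n,\mu))^{1/2^*_\mu}$.

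For (ii) I would write $\|u_\e\|_{2^*_{s_1}}^{2^*_{s_1}}=\int_{\mb R^n}\eta^{2^*_{s_1}}U_\e^{2^*_{s_1}}$ and split the integral into $B_\de(0)$, where $\eta\equiv1$, and its complement, where $0\le\eta\le1$. On $B_\de$ one compares with $\int_{\mb R^n}U_\e^{2^*_{s_1}}=S^{n/(2s_1)}$, the difference being the tail $\int_{B_\de^c}U_\e^{2^*_{s_1}}$; inserting the decay estimate gives $U_\e^{2^*_{s_1}}(x)\lesssim\e^{n}|x|^{-2n}$ away from the origin, so this tail, and with it the contribution of the annulus $B_{2\de}\setminus B_\de$, is $O(\e^n)$. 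For (i) I would estimate $\|u_\e\|_{X_1}^2=\|\eta U_\e\|_{X_1}^2$ by the standard fractional cut-off argument of Servadei--Valdinoci \cite{serva}: one expands $|\eta(x)U_\e(x)-\eta(y)U_\e(y)|^2$, compares with $|U_\e(x)-U_\e(y)|^2$, and bounds the extra terms carrying the factor $\eta(x)-\eta(y)$ using the Lipschitz bound on $\eta$ and the decay of $U_\e$; together with $\|U_\e\|_{X_1}^2=S^{n/(2s_1)}$ this produces the correction $O(\e^{n-2s_1})$.

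For (iii) and (iv) I would first record the exact untruncated Choquard energy. Since $U_\e$ attains $S_H$ on $\mb R^n$, $\big(\int_{\mb R^n}\int_{\mb R^n}|x-y|^{-\mu}U_\e^{2^*_\mu}(x)U_\e^{2^*_\mu}(y)\,dx\,dy\big)^{1/2^*_\mu}=\|U_\e\|_{X_1}^2/S_H=S^{n/(2s_1)}/S_H$, and using \eqref{eqb9} this rearranges to $\int_{\mb R^n}\int_{\mb R^n}|x-y|^{-\mu}U_\e^{2^*_\mu}U_\e^{2^*_\mu}=C(n,\mu)^{n/(2s_1)}S_H^{(2n-\mu)/(2s_1)}$, whose $1/2^*_\mu$ power is exactly the leading constant appearing in both (iii) and (iv). The upper bound (iii) then follows at once from the pointwise domination $0\le u_\e\le U_\e$ and positivity of the Riesz kernel, which give $\|u_\e\|_{NL}^{2\cdot2^*_\mu}\le\int_{\mb R^n}\int_{\mb R^n}|x-y|^{-\mu}U_\e^{2^*_\mu}U_\e^{2^*_\mu}$. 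For the lower bound (iv) I would discard everything outside $B_\de\times B_\de$, where $u_\e=U_\e$, and estimate the discarded set $(\mb R^n\times\mb R^n)\setminus(B_\de\times B_\de)$ through the tail decay: a direct computation shows the total mass of $U_\e^{2^*_\mu}$ scales like $\e^{\mu/2}$ while its tail outside $B_\de$ scales like $\e^{(2n-\mu)/2}$, and pairing these across the kernel bounds the discarded set by $O(\e^n)$. Taking $1/2^*_\mu$ powers yields (iv).

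The main obstacle is the careful bookkeeping of the error orders rather than the leading terms. In (i) the seminorm of a product does not split cleanly, so the cross terms must be handled through the precise interplay between the Lipschitz bound on $\eta$ and the decay of $U_\e$ in order to land on exactly $O(\e^{n-2s_1})$; this is the delicate part. In (iv) the Riesz kernel couples the concentration region with the tail, so one must check that neither the near region $B_\de\times B_\de^c$ nor the far region $B_\de^c\times B_\de^c$ contributes more than $O(\e^n)$. I would isolate and prove these two error bounds first, after which (ii) and (iii) follow routinely.
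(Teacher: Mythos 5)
Your proposal is correct, and it should be said up front that the paper offers no proof of this lemma at all --- it simply cites \cite{giac,serva} --- so the right comparison is with the arguments in those references, which you have essentially reconstructed: items (i) and (ii) are exactly the cut-off estimates of Servadei--Valdinoci (Proposition 21 of \cite{serva}), and your exact-value computation $\big(\int_{\mb R^n}\int_{\mb R^n}|x-y|^{-\mu}U_\e^{2^*_\mu}U_\e^{2^*_\mu}\big)^{1/2^*_\mu}=C(n,\mu)^{\frac{n}{2\cdot 2^*_\mu s_1}}S_H^{\frac{n-2s_1}{2s_1}}$ checks out, since $(n-2s_1)2^*_\mu=2n-\mu$ and $S=C(n,\mu)^{1/2^*_\mu}S_H$. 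The one place you genuinely diverge is (iii): the cited proofs obtain the upper bound by applying Hardy--Littlewood--Sobolev to $u_\e$ and then invoking (ii), whereas your pointwise domination $0\le u_\e\le U_\e$ together with positivity of the kernel is cleaner and in fact yields (iii) with no $O(\e^n)$ error term at all. For (iv), the two delicate points you flagged do work out with the orders you predicted: for the cross region one shows the Riesz potential satisfies $\int_{\mb R^n}|x-y|^{-\mu}U_\e^{2^*_\mu}(y)\,dy\lesssim_{\de}\e^{\mu/2}|x|^{-\mu}$ for $|x|\ge\de$ (split $y\in B_{\de/2}$, where $|x-y|\ge|x|/2$ and the mass is $O(\e^{\mu/2})$, from $y\in B_{\de/2}^c$, where the tail decay $U_\e(y)\lesssim\e^{(n-2s_1)/2}|y|^{-(n-2s_1)}$ applies), and then the outer integration over $B_\de^c$ gives $\e^{\mu/2}\cdot\e^{(2n-\mu)/2}\int_{B_\de^c}|x|^{-2n}\,dx=O(\e^n)$, which is precisely your pairing heuristic made rigorous; the near-diagonal part of $B_\de\times B_\de^c$ is even smaller, of order $O(\e^{2n-\mu})=o(\e^n)$ since both factors are evaluated at points of modulus comparable to $\de$ and $2n-\mu>n$. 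Note that a crude HLS bound on the discarded region would only give $O(\e^{(2n-\mu)/2})$, which is worse than $O(\e^n)$, so the potential-splitting you implicitly rely on is really needed there.
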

{ Now, we will estimate the fractional $p$-Laplacian norm of the family of functions $\{u_\e \}$.
\begin{Lemma}\label{L2}
  Let $1<p< 2$, then there exists $C>0$ such that
    \begin{align}\label{eqb56}
    \|u_\e\|_{X_2}^p \leq C \begin{cases}
      \e^{\frac{n-2s_1}{2}p} \; \; \mbox{if }1<p<n/(n-s_1) \\
      \e^{\frac{2-p}{2}n} \; \; \;\; \mbox{if }n/(n-s_1)\leq p<2.
    \end{cases}
    \end{align}
\end{Lemma}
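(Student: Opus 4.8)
The plan is to estimate the seminorm $\|u_\e\|_{X_2}^p=\int_Q\frac{|u_\e(x)-u_\e(y)|^p}{|x-y|^{n+ps_2}}\,dx\,dy$ directly, exploiting the explicit shape of the bubble. Write $m:=(n-2s_1)/2$, so the two candidate bounds are $T_1:=\e^{mp}=\e^{\frac{n-2s_1}{2}p}$ and $T_2:=\e^{\frac{2-p}{2}n}$. A one-line computation shows the exponents $mp$ and $(2-p)n/2$ coincide exactly when $p=n/(n-s_1)$, with $T_1\ge T_2$ for $p<n/(n-s_1)$ and $T_1\le T_2$ for $p\ge n/(n-s_1)$; hence $\max\{T_1,T_2\}$ is precisely the right-hand side of \eqref{eqb56}, and it suffices to prove $\|u_\e\|_{X_2}^p\le C\max\{T_1,T_2\}$. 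The only analytic inputs are the pointwise bound $U_\e(x)\le C\e^{m}(\e^2+|x|^2)^{-m}$, which follows from $U_\e(x)=\e^{-m}u^*(x/\e)$ together with $u^*(z)\le C(1+|z|^2)^{-m}$, and the smoothness of $u^*$.

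First I would split the integrand using $u_\e=\eta U_\e$ and $u_\e(x)-u_\e(y)=\eta(x)\big(U_\e(x)-U_\e(y)\big)+U_\e(y)\big(\eta(x)-\eta(y)\big)$, so that by $(a+b)^p\le 2^{p-1}(a^p+b^p)$ and $0\le\eta\le1$,
\[
\|u_\e\|_{X_2}^p\le 2^{p-1}\big(\mathrm{I}+\mathrm{II}\big),\qquad \mathrm{I}=\int_{Q}\frac{\eta(x)^p|U_\e(x)-U_\e(y)|^p}{|x-y|^{n+ps_2}}\,dx\,dy,\quad \mathrm{II}=\int_{Q}\frac{U_\e(y)^p|\eta(x)-\eta(y)|^p}{|x-y|^{n+ps_2}}\,dx\,dy.
\]
For $\mathrm{II}$, since $\eta\in C_c^\infty$ with $\na\eta$ supported in $B_{2\de}\setminus B_\de$, one has $|\eta(x)-\eta(y)|\le C\min\{1,|x-y|/\de\}$ and $\eta(x)-\eta(y)=0$ when $x,y$ both lie in $B_\de$ or both lie outside $B_{2\de}$; integrating the kernel in the far variable bounds the inner integral by $C\de^{-ps_2}$ uniformly, reducing $\mathrm{II}$ to $C\int_{B_{2\de}}U_\e(y)^p\,dy$ plus lower-order tail terms. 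Feeding in the pointwise bound and splitting $\{|y|<\e\}\cup\{\e\le|y|<2\de\}$ gives $\int_{B_{2\de}}U_\e^p\le C(\e^{mp}+\e^{\,n-mp})$, and since $n-mp=(2-p)n/2+ps_1>(2-p)n/2$ this is $\le C\max\{T_1,T_2\}$.

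The more delicate term is $\mathrm{I}$, which I would control by decomposing $Q$ according to the sizes of $|x|,|y|$ relative to $\e$ and $\de$. On the core $\{|x|,|y|\lesssim\e\}$ the substitution $x=\e\xi,\ y=\e\zeta$ turns the piece into $\e^{\,n-p(m+s_2)}[u^*]^p_{W^{s_2,p}(B_1)}$, which is finite and, since $n-p(m+s_2)=(2-p)n/2+p(s_1-s_2)>(2-p)n/2$, is $\le CT_2$. The regions where one variable is far from the other or far from the support are handled by the crude bound $|U_\e(x)-U_\e(y)|\le U_\e(x)+U_\e(y)$ followed by integrating the kernel, and again reduce to multiples of $\int_{B_{2\de}}U_\e^p$, hence to $\max\{T_1,T_2\}$. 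The genuine obstacle is the intermediate/tail region $\{\e\lesssim|x|,|y|\lesssim\de\}$: there $U_\e(x)\approx C\e^{m}|x|^{-2m}$, so this contributes $\e^{mp}$ times the $W^{s_2,p}$-seminorm of the fixed profile $|x|^{-(n-2s_1)}$ over the annulus $\e<|x|<\de$. I would estimate the latter by a dyadic decomposition in $\rho\in(\e,\de)$: the shell at scale $\rho$ contributes $\rho^{\,n-(n-2s_1)p-ps_2}\,d\rho/\rho$, so summing the geometric-type series gives either $O(1)$ (positive exponent) or $O(\e^{\,n-(n-2s_1)p-ps_2})$ (negative exponent). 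Multiplying back by $\e^{mp}$ yields either $\e^{mp}=T_1$ or $\e^{\,n-mp-ps_2}$, and one checks $n-mp-ps_2=(2-p)n/2+p(s_1-s_2)>(2-p)n/2$, so both alternatives are $\le C\max\{T_1,T_2\}$.

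Collecting the bounds for $\mathrm{I}$ and $\mathrm{II}$ gives $\|u_\e\|_{X_2}^p\le C\max\{T_1,T_2\}$, which is exactly \eqref{eqb56}. The main work—and the step I expect to be hardest to make fully rigorous—is the dyadic balance of scales in the tail region, where the competition between the concentration factor $\e^{mp}$ and the singular profile $|x|^{-(n-2s_1)}$ produces the case distinction; the bookkeeping of the cross regions (a core variable paired with a tail variable) is routine but must be carried out to confirm that no term exceeds $\max\{T_1,T_2\}$.
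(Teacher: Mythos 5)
Your proposal is correct, and the exponent arithmetic at its heart checks out (the crossover $\e^{\frac{n-2s_1}{2}p}$ versus $\e^{\frac{(2-p)n}{2}}$ at $p=n/(n-s_1)$, and $n-\frac{n-2s_1}{2}p-ps_2=\frac{(2-p)n}{2}+p(s_1-s_2)>\frac{(2-p)n}{2}$, which saves every borderline term), but your route is genuinely different from the paper's. The paper follows Servadei--Valdinoci: it splits $Q$ into $B_\de\times B_\de$, the cross regions $D$ and $E$, and $B_\de^c\times B_\de^c$, imports the pointwise estimates $|u_\e(x)-u_\e(y)|\le C\e^{(n-2s_1)/2}\min\{1,|x-y|\}$ from \cite{serva} (using, for $1<p<2$, the inequality $|a+b|^p\le |a|^p+|b|^p+A_p|a|^{p-1}|b|$ to handle the cross term on $D$) to show all cutoff errors are $O(\e^{\frac{n-2s_1}{2}p})$, and thereby reduces in \eqref{eqb59} to the \emph{whole-space} seminorm of $U_\e$, which it then treats in \eqref{eqb60} by pure scaling together with Lemma \ref{L1} and \eqref{eqbm}. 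You instead use the product-rule splitting $u_\e(x)-u_\e(y)=\eta(x)\big(U_\e(x)-U_\e(y)\big)+U_\e(y)\big(\eta(x)-\eta(y)\big)$ and estimate the localized $W^{s_2,p}$ seminorm of the bubble by hand via a core/annulus dyadic decomposition. Your version is more self-contained, and in fact more careful than the paper on the delicate point: the whole-space seminorm behind \eqref{eqb60} is governed by the tail $u^*\sim|x|^{-(n-2s_1)}$, whose dyadic shells contribute $\rho^{\,n-(n-2s_1)p-ps_2}$, so $[u^*]_{W^{s_2,p}(\mb R^n)}$ diverges when $p\le n/(n-2s_1+s_2)$; moreover Lemma \ref{L1}, whose constant depends on $|\Om|$, cannot be applied to the non-compactly-supported $U_1$ (no global inequality $\|\cdot\|_{X_2}\le C\|\cdot\|_{X_1}$ on $\mb R^n$ can hold, by scaling). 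Your dyadic argument, in which the cutoff truncates the sum at scale $\de$ and the divergent case is absorbed as $\e^{mp}\cdot\e^{\,n-2mp-ps_2}=\e^{\,n-mp-ps_2}\le C\e^{\frac{(2-p)n}{2}}$, is precisely the rigorous substitute for that step. One caution for the full write-up: in the cross-shell pieces where $x$ and $y$ lie in adjacent dyadic annuli with $|x-y|$ small, the crude bound $|U_\e(x)-U_\e(y)|\le U_\e(x)+U_\e(y)$ meets a non-integrable kernel; there you must use the Lipschitz bound $|U_\e(x)-U_\e(y)|\le C\e^{\frac{n-2s_1}{2}}\rho^{-(n-2s_1)-1}|x-y|$ at scale $\rho$, as you already do implicitly within a single shell.
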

\begin{proof}
  Following the ideas of \cite[Proposition 21]{serva}, we define the following sets
  \begin{align*}
  	&D:=\{ (x,y)\in\mb R^{2n} \ : x\in B_\de, y\in B_{\de}^c, |x-y|> \de/2    \} \; \; \mbox{and} \\
  	&E:= \{ (x,y)\in\mb R^{2n} \ : x\in B_\de, y\in B_{\de}^c, |x-y|\le \de/2 \}.
  \end{align*}
  Then, by the definition of $u_\e$, it is clear that
  \begin{align}\label{pfrac}
  	\|u_\e\|_{X_2}^p = \int_{\mb R^{2n}} \frac{|u_\e(x)-u_\e(y)|^p}{|x-y|^{n+ps_2}}dxdy &= \int_{B_\de}\int_{B_\de} \frac{|U_\e(x)-U_\e(y)|^p}{|x-y|^{n+ps_2}}dxdy  \nonumber\\
  	&\quad+ \left( 2\int_D + 2\int_E + \int_{B_{\de}^c} \int_{B_{\de}^c}  \right) \frac{|u_\e(x)-u_\e(y)|^p}{|x-y|^{n+ps_2}}dxdy.
  \end{align}
  By \cite[Claim 10 of Proposition 21]{serva}, for $x,y\in B_{\de}^c$, we have
  \begin{align*}
  	|u_\e(x)-u_\e(y)|\le C \e^{(n-2s_1)/2} \min\{ 1, |x-y|\}.
  \end{align*}
  Therefore,
  \begin{align*}
    I_1= \int_{B_{\de}^c} \int_{B_{\de}^c} \frac{|u_\e(x)-u_\e(y)|^p}{|x-y|^{n+ps_2}}dxdy \le C \e^{\frac{n-2s_1}{2}p} \int_{B_{2\de}} \int_{\mb R^n} \frac{ \min\{1,|x-y|^p\}}{|x-y|^{n+ps_2}}dxdy = C \e^{\frac{n-2s_1}{2}p}.
  \end{align*}
  Next, by \cite[(4.21)]{serva},  for $x\in B_{\de}$, $y\in B_{\de}^c$ with $|x-y|\le \de/2$, we have
  \[  	|u_\e(x)-u_\e(y)|\le C \e^{(n-2s_1)/2}|x-y|.   \]
  Then, proceeding similarly, we get
  \begin{align*}
  	I_2= \int_E \frac{|u_\e(x)-u_\e(y)|^p}{|x-y|^{n+ps_2}}dxdy \le C \e^{\frac{n-2s_1}{2}p}.
  \end{align*}
  To evaluate $I_3:= \int_D \frac{|u_\e(x)-u_\e(y)|^p}{|x-y|^{n+ps_2}}dxdy$, we first note that due to the fact $1<p<2$, there exists $A_p>0$ such that
  \begin{align*}
  	|u_\e(x)-u_\e(y)|^p \le |U_\e(x)-U_\e(y)|^p + |U_\e(y)-u_\e(y)|^p + A_p |U_\e(x)-U_\e(y)|^{p-1} |U_\e(y)-u_\e(y)|.
  \end{align*}
  Now, using $|u_\e(y)|\le |U_\e(y)|\le C \e^{(n-2s_1)/2}$ for all $y\in B_{\de}^c$ (\cite[(4.17)]{serva}), we obtain
  \begin{align}\label{eqb54}
  	\int_D \frac{|U_\e(y)-u_\e(y)|^p}{|x-y|^{n+ps_2}}dxdy\le 2^p \int_D \frac{|U_\e(y)|^p}{|x-y|^{n+ps_2}}dxdy &\le C \e^{\frac{n-2s_1}{2}p} 	\int_D \frac{1}{|x-y|^{n+ps_2}}dxdy \nonumber \\
  	&= C \e^{\frac{n-2s_1}{2}p}.
  \end{align}
  Furthermore,
  \begin{align}\label{eqb55}
  	\int_D \frac{|U_\e(x)-U_\e(y)|^{p-1} |U_\e(y)-u_\e(y)|}{|x-y|^{n+ps_2}}dxdy \le 2\int_D \left( \frac{|U_\e(x)|^{p-1} |U_\e(y)|}{|x-y|^{n+ps_2}}+  \frac{|U_\e(y)|^p}{|x-y|^{n+ps_2}} \right)dxdy.
  \end{align}
  By \cite[(4.17)]{serva} and definition of $U_\e$, we have
  \[ |U_\e(x)|^{p-1} |U_\e(y)|\le C \e^\frac{n-2s_1}{2} \left(b^2 +\big| \frac{x}{\e S^{1/(2s_1)}}  \big|^2   \right)^{-\frac{n-2s_1}{2}(p-1)}.  \]
  Therefore, for $\de_e=\de/\e$, $\xi= \frac{x}{\e S^{1/(2s_1)}}$ and $\zeta= |x-y|$, we deduce that
  \begin{align*}
  	\int_D \frac{|U_\e(x)|^{p-1} |U_\e(y)|}{|x-y|^{n+ps_2}} dxdy &\le C  \e^\frac{n-2s_1}{2}  \e^n \int_{\xi\in B_{\de_\e}} \int_{|\zeta|>\de/2} \big(b^2+|\xi|^2\big) ^{-\frac{n-2s_1}{2}(p-1)} |\zeta|^{-(n+ps_2)}d\xi d\zeta \nonumber \\
  	& \le C \e^{n+\frac{n-2s_1}{2}} \left[  1+\int_{\xi\in B_{\de_\e}\setminus B_1} \big(b^2+|\xi|^2\big) ^{-\frac{n-2s_1}{2}(p-1)} \right] \nonumber \\
  	 &\le C \e^{n+\frac{n-2s_1}{2}}  \big( \e^{(n-2s_1)(p-1)-n} +1\big) \le C \e^{(n-2s_1)(p-1/2)}.
  \end{align*}
  Using this together with \eqref{eqb54} and the fact that $(n-2s_1)/2 \le (n-2s_1)(p-1/2)$, \eqref{eqb55} implies
  \begin{align*}
  		\int_D \frac{|U_\e(x)-U_\e(y)|^{p-1} |U_\e(y)-u_\e(y)|}{|x-y|^{n+ps_2}}dxdy \le C  \e^{\frac{n-2s_1}{2}p}.
  \end{align*}
  Thus, from \eqref{pfrac}, we obtain
  \begin{align}\label{eqb59}
  	\|u_\e\|_{X_2}^p = \int_{\mb R^{2n}} \frac{|u_\e(x)-u_\e(y)|^p}{|x-y|^{n+ps_2}}dxdy &\le \int_{\mb R^{2n}} \frac{|U_\e(x)-U_\e(y)|^p}{|x-y|^{n+ps_2}}dxdy + C  \e^{\frac{n-2s_1}{2}p}.
  \end{align}
  Now, it remains to evaluate only $\|U_\e\|_{X_2}^p$. We recall $U_\e(x)= \e^{-\frac{n-2s_1}{2}} u^*\big(\frac{x}{\e}\big)$ and $U_1(x)=u^*(x)$. Using suitable change of variables, we deduce that
  \begin{align*}
  \|U_\e\|_{X_2}^p = \e^{-\frac{n-2s_1}{2}p} \int_{\mb R^{2n}} \frac{|u^*(\frac{x}{\e})-u^*(\frac{y}{\e})|^p}{|x-y|^{n+ps_2}}dxdy= \e^{-\frac{n-2s_1}{2}p} \int_{\mb R^{2n}} \frac{|u^*(z)-u^*(w)|^p}{|z-w|^{n+ps_2}}dzdw,
  \end{align*}
  then, using Lemma \ref{L1} and \eqref{eqbm}, we get
  \begin{align}\label{eqb60}
  	\|U_\e\|_{X_2}^p \leq C \e^{-\frac{n-2s_1}{2}p} \|U_1\|_{X_1}^p \le C \e^{n(1-\frac{p}{2})}.
  \end{align}
  Hence, \eqref{eqb59} and \eqref{eqb60} give the required result of the Lemma. \QED
\end{proof}

\begin{Lemma}\label{lemm2}
	There exists $\La_{0}>0$ such that for every $\la\in(0,\La_{0})$ and $\ba>0$, there holds
	 $\sigma_{\la}^{-} <c_\infty, $
	 where $c_\infty$ is as defined in Lemma \ref{lem3}, in each of the following cases
	 \begin{enumerate}
	 	\item[(I)] for all $q>0$, if $1<p<n/(n-s_1)$,
	 	\item[(II)] for all $q\in\big(1, \frac{n(2-p)}{n-2s_1}\big)  \cup\big (\frac{4n}{4n-np-4s_1},p\big)$, if $n/(n-s_1)\le p<2$.
	 \end{enumerate}
\end{Lemma}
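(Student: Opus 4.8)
The plan is to exhibit a single element of $\mc M_\la^-$ whose energy lies strictly below $c_\infty$, since $\sigma_\la^-=\inf_{\mc M_\la^-}\mc I_\la$. I would take the concentrating family $u_\e=\eta U_\e$. Because $a$ is continuous with $m_a=\inf_{B_{\de_1}}a>0$ and $u_\e$ concentrates at the origin (which we place inside $B_{\de_1}$), for all small $\e$ we have $\int_\Om a(x)|u_\e|^q\,dx>0$. Then Lemma \ref{lemm5}(i) yields a unique $t_2=t_2(\e)>0$ with $t_2u_\e\in\mc M_\la^-$ and $\mc I_\la(t_2u_\e)=\max_{t\ge t_{\max}}\mc I_\la(tu_\e)$, so that
\begin{equation*}
\sigma_\la^-\le \mc I_\la(t_2u_\e)\le \sup_{t\ge 0}\mc I_\la(tu_\e).
\end{equation*}
The task reduces to estimating $\sup_{t\ge0}\mc I_\la(tu_\e)$ and then choosing $\e=\e(\la)$ (and a threshold $\La_0$ for $\la$) so that this sup is below $c_\infty$.

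For the supremum I would note that the maximizer $t_\e$ stays in a fixed compact subinterval of $(0,\infty)$ (uniformly for small $\e,\la$), since the critical term $-\tfrac{t^{2\cdot2^*_\mu}}{2\cdot2^*_\mu}\|u_\e\|_{NL}^{2\cdot2^*_\mu}$ dominates at large $t$ while the subcritical term controls small $t$; on $\mc M_\la^-$ a lower bound for $t_2$ also follows from Lemma \ref{L35}(ii). Treating the $\ba$- and $\la$-terms as lower-order perturbations of the purely critical fibering map gives
\begin{equation*}
\sup_{t\ge0}\mc I_\la(tu_\e)\le \Big(\tfrac12-\tfrac1{2\cdot2^*_\mu}\Big)\Big(\tfrac{\|u_\e\|_{X_1}^{2\cdot2^*_\mu}}{\|u_\e\|_{NL}^{2\cdot2^*_\mu}}\Big)^{\frac{1}{2^*_\mu-1}}+C\ba\|u_\e\|_{X_2}^p-c\la\!\int_\Om a|u_\e|^q\,dx,
\end{equation*}
with $c>0$. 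By Lemma \ref{lemm1}(i),(iv) the first term equals the positive constant appearing in $c_\infty$ plus an error $O(\e^{n-2s_1})$; Lemma \ref{L2} controls $\|u_\e\|_{X_2}^p$; and $\int_\Om a|u_\e|^q\ge m_a\int_{B_\de}|U_\e|^q$ is bounded below by the standard concentration estimate, whose leading power of $\e$ changes at $q=n/(n-2s_1)$.

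It is then convenient to set $\eta=\e^{(n-2s_1)/2}\in(0,1)$, so that all four contributions become monomials in $\eta$ times a power of $\la$, and the desired inequality $\sup_t\mc I_\la(tu_\e)<c_\infty$ acquires the elementary form
\begin{equation*}
c\,\la\,\eta^{\,\gamma}\;>\;C_1\eta^{2}+C\ba\,\eta^{\,\pi}+C_0\la^{\frac{2}{2-q}}\qquad\text{for some }\eta\in(0,1),
\end{equation*}
where $\gamma$ is the $\eta$-exponent of the subcritical gain (equal to $q$ when $q<n/(n-2s_1)$ and to $2^*_{s_1}-q$ otherwise), and $\pi$ is the $\eta$-exponent supplied by Lemma \ref{L2} ($\pi=p$ for $1<p<n/(n-s_1)$, and $\pi=(2-p)n/(n-2s_1)$ for $n/(n-s_1)\le p<2$). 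Choosing $\eta$ as the scale that balances the gain against the dominant error on the right, and then shrinking $\la$, this inequality is solvable precisely when the exponents $\gamma,\pi,2$ and the threshold exponent $2/(2-q)$ fall in the right order; unwinding those order relations in terms of $n,s_1,p,q$ is what reproduces case (I) and the two disjoint $q$-intervals of case (II), the gap between them reflecting that the concentration gain is weakest near $q=n/(n-2s_1)$.

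The main obstacle is this middle comparison. Because the statement must allow \emph{every} $\ba>0$, the positive fractional $p$-Laplacian energy $\ba\|u_\e\|_{X_2}^p$ cannot be thrown away, and it competes with the subcritical $L^q$-gain on the same $\eta$-scale; forcing the gain to beat it while also overcoming the constant $C_0\la^{2/(2-q)}$ produced by Theorem \ref{thb2} is exactly what pins down the admissible exponents, and hence the restrictions on $q$. The sharp reading of Lemma \ref{L2} (and deciding which of $\gamma,\pi,2$ is dominant at the chosen scale) is the delicate step, while the fibering-map construction and the compactness afforded by Lemma \ref{lem3} are routine.
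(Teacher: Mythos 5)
Your proposal reproduces the paper's argument essentially step for step: the paper likewise tests with $u_\e$, projects onto $\mc M_\la^-$ via Lemma \ref{lemm5} so that $\sigma_\la^-\le\sup_{t\ge0}\mc I_\la(tu_\e)$, splits this supremum into the purely critical fibering maximum (estimated through Lemma \ref{lemm1} and \eqref{eqb9} as the constant in $c_\infty$ plus $O(\e^{n-2s_1})$), the $\ba$-term $\ba\frac{t_1^p}{p}\|u_\e\|_{X_2}^p$ kept via a uniform upper bound $t_\e\le t_1$ on the maximizer and controlled by Lemma \ref{L2}, and the concave gain $\frac{t_0^q m_a}{q}\la\int_{B_\de}|U_\e|^q$ bounded below by the standard concentration estimate \eqref{eqb37}, and then matches scales in $\la$ (explicitly $\e^{n(2-p)/2}=\la^{2/(2-q)}$ in the regime $n<(n-2s_1)q$, which yields exactly the threshold $q>4n/(4n-np-4s_1)$) and compares exponents against $2$, $\pi$ and $2/(2-q)$ --- precisely your $\eta$-monomial reduction, with your $\gamma$ and $\pi$ agreeing with the paper's exponents after the substitution $\eta=\e^{(n-2s_1)/2}$. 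The only differences are cosmetic (your monomial normalization, and bounding $t$ from below by Lemma \ref{L35}(ii) where the paper instead disposes of $\sup_{0\le t\le t_0}$ separately), so this is the same proof.
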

\begin{proof} To prove the lemma, we will show that there exists $\e>0$ such that $\ds\sup_{t\geq 0} \mc{I}_\la(tu_\e) <c_\infty$. Let $\ga_0>0$ be such that for all $\la\in(0,\ga_0)$, $c_\infty>0$ holds. Then, with the help of Lemma \ref{L1}, we have
	\begin{align*}
	\mc{I}_\la(tu_\e)&\leq \frac{t^{2}}{2}\|u_\e\|_{X_1}^{2}+ \ba \frac{t^{p}}{p}\|u_\e\|_{X_2}^{p} \\ &\leq
	\frac{t^{2}}{2}\|u_\e\|_{X_1}^{2}+C\ba \frac{t^{p}}{p}\|u_\e\|_{X_1}^{2}  \leq
	C(t^{2}+t^{p}).
	\end{align*}
	Therefore, there exists $t_0>0$ such that $$\ds\sup_{0\leq t\leq t_0}\mc{I}_\la(tu_\e) < c_\infty.$$
	Let $\varTheta(t)= \frac{t^{2}}{2}\|u_\e\|_{X_1}^{2} +\ba \frac{t^{p}}{p}\|u_\e\|_{X_2}^{p}-\frac{t^{2.2^*_{\mu}}}{2.2^*_{\mu}} \|u_\e\|_{NL}^{2.2^*_{\mu}},$ then
	we note that $\varTheta(0)=0$, $\varTheta(t)>0$ for $t$ small enough, $\varTheta(t)<0$ for $t$ large enough,
	and there exists $t_\e >0$ such that $\ds\sup_{t\ge 0}\varTheta(t)=\varTheta(t_\e),$ that is
	$$0=\varTheta^\prime(t_\e)=t_\e\|u_\e\|_{X_1}^{2} +\ba t_\e^{p-1}\|u_\e\|_{X_2}^{p}-t_\e^{2.2^*_{\mu}-1} \|u_\e\|_{NL}^{2.2^*_{\mu}} $$
	which gives us
	\begin{align*}
	t_\e ^{2.2^*_{\mu}-p}&=\frac{1}{\|u_\e\|_{NL}^{2.2^*_{\mu}}} \big(t_\e^{2-p}\|u_\e\|_{X_1}^{2}+ \ba\|u_\e\|_{X_2}^{p}\big)
	< C(1+t_\e^{2-p}).
	\end{align*}
	Since $2.2^*_{\mu}> 2$, there exists $t_1> 0$ such that $t_\e \le t_1$ for all  $\e >0$.
	Also, for $\de>0$ such that $2\de<\de_1$, we have
	\begin{align*}
	\ds\int_\Om a(x)|u_\e|^q dx &=\ds\int_{B_{2\de}(0)}a(x)|u_\e|^q dx
	\ge m_a\ds\int_{B_{2\de}(0)}|u_\e|^q dx
	\ge m_a\ds\int_{B_{\de}(0)}|U_{\e}|^q dx.
	\end{align*}
	Therefore, we  obtain
	\begin{align}\label{eqb33}
	\ds\sup_{t\ge t_0}\mc{I_\la}(tu_\e)&\le \ds\sup_{t\ge 0}\varTheta(t) -\frac{t_0^qm_a}{q}\la\ds\int_{B_\de(0)}|U_\e|^q    \nonumber\\
	&=\frac{t_\e^{2}}{2}\|u_\e\|_{X_1}^{2}+ \ba \frac{t_\e^{p}}{p}\|u_\e\|_{X_2}^{p} -\frac{t_\e^{2.2^*_{\mu}}}{2.2^*_{\mu}}\|u_\e\|_{NL}^{2.2^*_{\mu}}- \frac{t_0^qm_a}{q}\la\ds\int_{B_\de(0)}|U_\e|^q    \nonumber\\
	&\leq \ds\sup_{t\ge 0}\left(\frac{t^{2}}{2}\|u_\e\|_{X_1}^{2}- \frac{t^{2.2^*_{\mu}}}{2.2^*_{\mu}}\|u_\e\|_{NL}^{2.2^*_{\mu}}\right)
   +\ba \frac{t_1^{p}}{p}\|u_\e\|_{X_2}^{p}  -\frac{t_0^qm_a}{q}\la\ds\int_{B_\de(0)}|U_\e|^q.
	\end{align}
	Let $\Upsilon(t)=\frac{t^{2}}{2}\|u_\e\|_{X_1}^{2}- \frac{t^{2.2^*_{\mu}}}{2.2^*_{\mu}}\|u_\e\|_{NL}^{2.2^*_{\mu}}$. It is easy to verify that $\Upsilon$ attains its maximum at $\hat{t}=\left(\frac{\|u_\e\|_{X_1}^{2}}{\|u_\e\|_{NL}^{2.2^*_{\mu}}}\right)^{1/(2^*_{\mu}-2)}$.
	Therefore, \begin{align*}
	\ds\sup_{t\ge 0}\Upsilon(t)= \Upsilon(\hat{t})= \frac{n-\mu+2s_1}{2(2n-\mu)}	\left(\frac{\|u_\e\|_{X_1}^{2}}{\|u_\e\|_{NL}^{2.2^*_{\mu}}}\right)^{2^*_{\mu}/(2^*_{\mu}-1)},
	\end{align*}
	using the estimates of Lemma \ref{lemm1} and \eqref{eqb9}, we have
	\begin{align}\label{eqb36}
	\ds\sup_{t\ge 0}\Upsilon(t)&\leq \frac{n-\mu+2s_1}{2(2n-\mu)} \left(\frac{(C(n,\mu)^\frac{1}{2^*_{\mu}}S_H)^\frac{n}{2s_1}+O(\e^{n-2s_1})}{\big(C(n,\mu)^\frac{n}{2s_1}S_H^\frac{2n-\mu}{2s_1}-O(\e^n)\big)^\frac{1}{2^*_{\mu}}} \right)^{2^*_{\mu}/(2^*_{\mu}-1)}  \nonumber \\
	&\leq \frac{n-\mu+2s_1}{2(2n-\mu)} S_H^\frac{2n-\mu}{n-\mu+2s_1}+O(\e^{n-2s_1}).
	\end{align}
	For $\de>0$, sufficiently small such that $B_{2\de}(0)\Subset\Om$, $2\de <\de_1$ and $0<\e<\de/2$, we have the following estimate (\cite{giac})
	\begin{align}\label{eqb37}
		\int_{B_{\de}(0)}|U_\e|^q\ge C_3\begin{cases}
		      \e^{n-\frac{(n-2s_1)}{2}q}, \quad \mbox{if }n<(n-2s_1)q \\
		      \e^\frac{n}{2}|\ln\e|, \quad \mbox{if }n=(n-2s_1)q \\
		      \e^{\frac{(n-2s_1)}{2}q}, \quad \mbox{if }n>(n-2s_1)q.
		\end{cases}
	\end{align}
	Thus, using \eqref{eqb56}, \eqref{eqb36} and \eqref{eqb37} in \eqref{eqb33}, we get
	\begin{align}\label{eqb35}
	\ds\sup_{t\ge t_0}\mc{I}_\la(tu_\e)\leq \frac{n-\mu+2s_1}{2(2n-\mu)} S_H^\frac{2n-\mu}{n-\mu+2s_1} &+C_1\e^{n-2s_1}+C_4 \begin{cases}
	\e^{\frac{n-2s_1}{2}p} \; \; \mbox{if }1<p<n/(n-s_1) \\
	\e^{\frac{2-p}{2}n} \; \; \;\; \mbox{if }n/(n-s_1)\leq p<2.
	\end{cases} \nonumber\\
	&\qquad- \la C_2\begin{cases}
	\e^{n-\frac{(n-2s_1)}{2}q}, \quad \mbox{if }n<(n-2s_1)q \\
	\e^\frac{n}{2}|\ln\e|, \quad \mbox{if }n=(n-2s_1)q \\
	\e^{\frac{(n-2s_1)}{2}q}, \quad \mbox{if }n>(n-2s_1)q.
	\end{cases}
	\end{align}
	Now, we consider the following cases.\\
\noi\textbf{Case}(I): If $1<p<n/(n-s_1)$.\\
   In this case ${(n-2s_1)p/2}<n(1-p/2)$ and $1<q<p<n/(n-s_1)<n/(n-2s_1)$, therefore \eqref{eqb35} implies
   \begin{align*}
   	\sup_{t\ge t_0}\mc{I}_\la(tu_\e)\leq \frac{1}{2}\frac{n-\mu+2s_1}{2n-\mu} S_H^\frac{2n-\mu}{n-\mu+2s_1} + C_4 \e^{\frac{n-2s_1}{2}p} - \la C_2 \e^{\frac{(n-2s_1)}{2}q}.
   \end{align*}
   And,  therefore there exists $\ga_1>0$ such that for all $\la\in(0,\ga_1)$
      \begin{align*}
      	C_4\e^{\frac{n-2s_1}{2}p}-\la C_2\e^{\frac{n-2s_1}{2}q}< -C_0 \la^{\frac{2}{2-q}}.
      \end{align*}
 \textbf{Case}(II): If $n/(n-s_1)\leq p<2$.\\
 In this case, ${(n-2s_1)p/2}\ge n(1-p/2)$, therefore \eqref{eqb35} implies
    \begin{align}\label{eqb38}
    \sup_{t\ge t_0}\mc{I}_\la(tu_\e)\leq \frac{n-\mu+2s_1}{2(2n-\mu)} S_H^\frac{2n-\mu}{n-\mu+2s_1} +C_4
    \e^{\frac{2-p}{2}n}- \la C_2\begin{cases}
    \e^{n-\frac{(n-2s_1)}{2}q}, \quad \mbox{if }n<(n-2s_1)q \\
    \e^\frac{n}{2}|\ln\e|, \quad \mbox{if }n=(n-2s_1)q \\
    \e^{\frac{(n-2s_1)}{2}q}, \quad \mbox{if }n>(n-2s_1)q.
    \end{cases}
    \end{align}
    \textbf{Subcase}(a): If $n>(n-2s_1)q$.\\
    In this case, we see that $\frac{(n-2s_1)}{2}q< n(1-\frac{p}{2})$, if $q<\frac{n(2-p)}{n-2s_1}$. Then, proceeding similar to Case(I), there exists $\ga_2>0$ such that for all $\la\in (0,\ga_2)$
     \begin{align*}
    C_4\e^{\frac{n-2s_1}{2}p}-\la C_2\e^{\frac{n-2s_1}{2}q}< -C_0 \la^{\frac{2}{2-q}}.
    \end{align*}
    \textbf{Subcase}(b): If $n<(n-2s_1)q$.\\
	Choosing $\e=(\la^\frac{2}{2-q})^\frac{2}{n(2-p)}\le\de$, \eqref{eqb38} yields
	\begin{align*}
	\sup_{t\ge t_0}\mc{I}_\la(tu_\e) \leq \frac{n-\mu+2s_1}{2(2n-\mu)} S_H^\frac{2n-\mu}{n-\mu+2s_1}+C_4\la^\frac{2}{2-q}
- \la C_2
	\la^{\frac{4}{n(2-q)(2-p)}\big(n-\frac{(n-2s_1)}{2}q\big)}.
	\end{align*}
	Clearly,
	\begin{align*}
	1&+\frac{4}{n(2-q)(2-p)} \left(n-\frac{n-2s_1}{2}q\right)
	< \frac{2}{2-q}	
	\end{align*}
	if  $q>4n/(4n-np-4s_1)$.
	Then, in this situation, there exists $\ga_3>0$ such that for all $\la\in(0,\ga_3)$,
	$$C_4\la^\frac{2}{2-q}-C_2\;\la\; \la^{\frac{4}{n(2-q)(2-p)}  \left(n-\frac{n-2s_1}{2}q\right)}< -C_0\la^\frac{2}{2-q}.$$
	Let $\La_{0}=\min\{\ga_1, \ga_2, \ga_3,\La,(\frac{\de}{2})^{n(2-p)/2}\} >0$, then for all $\la\in(0,\La_{0})$ and sufficiently small $\e>0$, we obtain
	$$\ds\sup_{t\ge 0}\mc{I}_\la(tu_\e)< c_\infty.$$
	Now choosing $\de>0$ sufficiently small, we see that $u_\e\in X_1$ and using Lemma \ref{lemm5}, there exists $\tilde{t}>0$ such that $\tilde{t}u_\e\in\mc M_\la^-$.
	Hence, \begin{align*}
	\sigma_\la^- \leq\mc I_\la(\tilde{t}u_\e)\leq \ds\sup_{t\ge 0}\mc I_\la(tu_\e)< c_\infty.
	\end{align*}
	This completes proof of the Lemma.  \QED
	\end{proof}	

\noi\textbf{Proof of Theorem \ref{th2}} (ii): With the help of Lemma \ref{tt} and Proposition \ref{propb2}, we get a minimizing sequence $\{v_k\}\subset\mc M_\la^-$, which is also $(PS)_{\sigma_\la^-}$ sequence. By Lemmas \ref{lemm2} and \ref{lem3}, there exists $v_\la\in X_1$ such that $v_k\ra v_\la$ in $X_1$. Using Theorem \ref{thb2} and strong convergence, $v_\la\in\mc M_\la^-$ and $\mc I_\la(v_\la)=\sigma_\la^-$. Thus, $v_\la$ is a weak solution of problem $(P_\la)$ in $\mc M_\la^-$ and since $u_\la\in\mc M_\la^+$, $u_\la\neq v_\la$.\QED

\begin{Lemma}\label{lemm3}
	There exists $\La_{00},\ba_{00}>0$ such that for every $\la\in(0,\La_{00})$ and $\ba\in(0,\ba_{00})$, there holds
	$$\sigma_{\la}^{-} <c_\infty. $$
\end{Lemma}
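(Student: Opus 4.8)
The plan is to argue exactly as in Lemma \ref{lemm2}: it suffices to produce a single $\e>0$ for which $\ds\sup_{t\ge 0}\mc I_\la(tu_\e)<c_\infty$. Once this is in hand, I would choose $\de>0$ small enough that $u_\e\in X_1$ and apply Lemma \ref{lemm5} to $u_\e$; since $2\de<\de_1$ and $m_a>0$ give $\int_\Om a(x)|u_\e|^q\,dx\ge m_a\int_{B_\de(0)}|U_\e|^q\,dx>0$, there is $\tilde t>0$ with $\tilde t u_\e\in\mc M_\la^-$, whence $\sigma_\la^-\le\mc I_\la(\tilde t u_\e)\le\ds\sup_{t\ge 0}\mc I_\la(tu_\e)<c_\infty$. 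Throughout I would keep $\La_{00}\le\La$ so that Lemma \ref{lemm5} applies.

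To bound the supremum I would rerun the computation of Lemma \ref{lemm2} up to \eqref{eqb35}, but now keeping the dependence on $\ba$ explicit. Splitting $\ds\sup_{0\le t\le t_0}$ (made $<c_\infty$ by taking $t_0$ small, via Lemma \ref{L1}) from $\ds\sup_{t\ge t_0}$, and inserting the estimates of Lemma \ref{lemm1}, \eqref{eqb9}, \eqref{eqb56} and \eqref{eqb37} into \eqref{eqb33}, one gets
\begin{align*}
\sup_{t\ge t_0}\mc I_\la(tu_\e)\le \frac{n-\mu+2s_1}{2(2n-\mu)}S_H^{\frac{2n-\mu}{n-\mu+2s_1}}+C_1\e^{n-2s_1}+\ba\,C_4\,\e^{\theta(p)}-\la\,C_2\,\e^{\kappa(q)},
\end{align*}
where $\theta(p)$ is the exponent appearing in \eqref{eqb56}, $\kappa(q)$ is the one in \eqref{eqb37}, and the coefficient of $\e^{\theta(p)}$ has been written as $\ba C_4$ with $C_4$ independent of $\ba$ (using Lemma \ref{L2} together with the uniform bound $t_\e\le t_1$ valid for $\ba$ in a fixed bounded range). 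The decisive new point, compared with Lemma \ref{lemm2}, is that I no longer restrict $q$ in order to force $\kappa(q)<\theta(p)$; instead I treat $\ba$ as a free small parameter. I would fix $\e=\e(\la)$ by the Brezis--Nirenberg balance that makes $\la\,\e^{\kappa(q)}$ comparable to the target $C_0\la^{2/(2-q)}$ while keeping $C_1\e^{n-2s_1}$ of strictly higher order, so that $C_1\e^{n-2s_1}-\la C_2\e^{\kappa(q)}+C_0\la^{2/(2-q)}$ is strictly negative of order $\la^{2/(2-q)}$; then I would shrink $\ba_{00}$ so that the leftover $\ba\,C_4\,\e^{\theta(p)}$ does not destroy this strict negativity, giving $\ds\sup_{t\ge 0}\mc I_\la(tu_\e)<c_\infty$.

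The main obstacle is precisely this last absorption step. In the range of $q$ left uncovered by Lemma \ref{lemm2} one has $\theta(p)\le\kappa(q)$, so for $\e<1$ the factor $\e^{\theta(p)}$ is \emph{no smaller} than the gain factor $\e^{\kappa(q)}$, and the fractional $p$-Laplacian contribution $\ba\|u_\e\|_{X_2}^p$ cannot be controlled by the scaling in $\e$ alone; it is only the smallness of $\ba$ that compensates. Consequently the choices of $\La_{00}$ and $\ba_{00}$ must be coordinated with the balance $\e=\e(\la)$ so that the strict inequality persists simultaneously for all $\la\in(0,\La_{00})$ and $\ba\in(0,\ba_{00})$; this uniform bookkeeping is the heart of the argument and the part that genuinely distinguishes the present lemma from Lemma \ref{lemm2}. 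Once $\ds\sup_{t\ge 0}\mc I_\la(tu_\e)<c_\infty$ is secured, the conclusion $\sigma_\la^-<c_\infty$ follows from the reduction above, and Lemmas \ref{lem3} and \ref{thb2} then yield (as in the proof of Theorem \ref{th2}(ii)) a convergent minimizing $(PS)_{\sigma_\la^-}$ sequence over $\mc M_\la^-$, hence the desired second solution.
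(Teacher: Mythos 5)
Your overall reduction -- produce a single $\e$ with $\ds\sup_{t\ge 0}\mc I_\la(tu_\e)<c_\infty$, then pass through Lemma \ref{lemm5} to obtain $\tilde t u_\e\in\mc M_\la^-$ -- matches the paper, and your three-term bound $C_1\e^{n-2s_1}+\ba C_4\e^{\theta(p)}-\la C_2\e^{\kappa(q)}$ is the correct starting estimate. The gap is in your absorption step, and it is fatal rather than a matter of bookkeeping. With your $\e=\e(\la)$ determined by $\la\e^{\kappa(q)}\asymp\la^{2/(2-q)}$, i.e.\ $\e^{\kappa(q)}\asymp\la^{q/(2-q)}$, and using your own observation that $\theta(p)\le\kappa(q)$ in the range of $q$ this lemma is meant to add, the leftover term obeys
\begin{align*}
\ba C_4\,\e(\la)^{\theta(p)}\;\ge\;\ba C_4\,\e(\la)^{\kappa(q)}\;\asymp\;\ba\,\la^{\frac{q}{2-q}},
\end{align*}
whereas the negativity margin you have created is only of size $\la^{\frac{2}{2-q}}$. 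Since $\la^{q/(2-q)}\gg\la^{2/(2-q)}$ as $\la\to 0$, for any \emph{fixed} $\ba>0$ the $p$-term swamps the margin once $\la$ is small. Quantitatively, your absorption forces $\ba\lesssim\la^{\frac{2\kappa(q)-q\theta(p)}{(2-q)\kappa(q)}}$, and this exponent is strictly positive exactly when $q\theta(p)<2\kappa(q)$, which (with $\theta(p)=\frac{(2-p)n}{2}$ and $\kappa(q)=n-\frac{(n-2s_1)q}{2}$) reads $q<\frac{4n}{4n-np-4s_1}$ -- precisely the range \emph{not} already covered by Lemma \ref{lemm2}. So in the only regime where the present lemma says something new, no $\ba_{00}$ can be ``shrunk once and for all'' in your scheme: it necessarily degenerates with $\la$, and your proposal, which explicitly calls this coordination the heart of the argument, supplies no mechanism for it.

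The paper's proof uses a device you are missing: it couples $\ba$ to $\e$ \emph{before} optimizing, setting $\ba=\e^\al$ with $\al>n-2s_1$. Then $\ba\|u_\e\|_{X_2}^p\le C\e^{\al+\theta(p)}=o(\e^{n-2s_1})$, so the fractional $p$-term is absorbed into $C_1\e^{n-2s_1}$ regardless of how $\theta(p)$ compares with $\kappa(q)$, and the estimate collapses to the pure fractional-Laplacian one. For $n>(n-2s_1)q$ one then argues as in Case (I) of Lemma \ref{lemm2}, now with no restriction on $q$, since $\frac{(n-2s_1)q}{2}<n-2s_1$ holds automatically from $q<2$; for $n\le(n-2s_1)q$ one takes $\e=(\la^{2/(2-q)})^{1/(n-2s_1)}$, so that $C_1\e^{n-2s_1}=C_1\la^{2/(2-q)}$ while the gain $\la\e^{\kappa(q)}=\la^{1+\frac{2\kappa(q)}{(2-q)(n-2s_1)}}$ carries an exponent strictly below $\frac{2}{2-q}$ (this uses $q>n/(n-2s_1)$; the borderline case is saved by the factor $|\ln\e|$), hence dominates all error terms. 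The admissible $\ba$'s thus come out coupled to $\la$ through $\ba\le\e(\la)^\al$, and extend downward by monotonicity of $\mc I_\la$ in $\ba$; this a priori coupling -- not a uniform-in-$\la$ smallness of $\ba$ imposed a posteriori -- is the essential new idea of the lemma, inherited from \cite[Lemma 4.7]{goel1}, and it is the ingredient your argument lacks.
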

\begin{proof}
	Here we only give outline of the proof because arguments are similar to the previous lemma and \cite[Lemma 4.7]{goel1}. Let $\ba= \e^\al$, where $\al>(n-2s_1)$ and taking into account \eqref{eqb36} and \eqref{eqb37}, from \eqref{eqb33}, it follows that
	\begin{align*}
	\sup_{t\ge t_0}\mc{I}_\la(tu_\e)\leq \frac{n-\mu+2s_1}{2(2n-\mu)} S_H^\frac{2n-\mu}{n-\mu+2s_1} +C_1\e^{n-2s_1}- \la C_2\begin{cases}
	\e^{n-\frac{(n-2s_1)}{2}q}, \quad \mbox{if }n<(n-2s_1)q \\
	\e^\frac{n}{2}|\ln\e|, \quad \mbox{if }n=(n-2s_1)q \\
	\e^{\frac{(n-2s_1)}{2}q}, \quad \mbox{if }n>(n-2s_1)q.
	\end{cases}
	\end{align*}
	The case, when $n>(n-2s_1)q$, follows exactly on the same lines of Case(I) of Lemma \ref{lemm2}. For the other case, we set $\e=(\la^\frac{2}{2-q})^\frac{1}{n-2s_1}\le\de$ and proceed similarly to \cite[Lemma 4.7]{goel1}, to get the required result of the lemma for some $\La_{00},\ba_{00}>0$.\QED
\end{proof}

\noi\textbf{Proof of Theorem \ref{th3}}: Proof follows exactly on the same lines of Proof of Theorem \ref{th2} (ii) by using Lemma \ref{lemm3} instead of Lemma \ref{lemm2}.\QED

}

\end{document}